\newtheorem{theorem}{Theorem}[section]
\newtheorem{proposition}[theorem]{Proposition}
\newtheorem{definition}[theorem]{Definition\rm}
\newtheorem{remark}[theorem]{Remark}
\newcommand{\T}{\ensuremath{\mathbb{T}}}
\newcommand*{\R}{\ensuremath{\mathbb{R}}}
\renewcommand*{\S}{\ensuremath{\mathcal{S}}}
\newcommand*{\N}{\ensuremath{\mathbb{N}}}
\newcommand*{\Z}{\ensuremath{\mathbb{Z}}}
\renewcommand*{\div}{\ensuremath{\mathrm{div\,}}}
\newcommand*{\tr}{\ensuremath{\mathrm{tr\,}}}
\newcommand{\eps}{\varepsilon}
\newcommand*{\hys}{|||}
\renewcommand*{\P}{\ensuremath{\mathcal{P}}}
\newcommand*{\Q}{\ensuremath{\mathcal{Q}}}
\newcommand*{\RR}{\ensuremath{\mathcal{R}}}
\begin{document}

\title[Onsager's conjecture]
{Dissipative Euler flows and Onsager's conjecture}

\author{Camillo De Lellis}
\address{Institut f\"ur Mathematik, Universit\"at Z\"urich, CH-8057 Z\"urich}
\email{camillo.delellis@math.unizh.ch}

\author{L\'aszl\'o Sz\'ekelyhidi Jr.}
\address{Institut f\"ur Mathematik, Universit\"at Leipzig, D-04103 Leipzig}
\email{laszlo.szekelyhidi@math.uni-leipzig.de}

\begin{abstract}
Building upon the techniques introduced in \cite{DS3}, for any $\theta<\frac{1}{10}$ we construct periodic weak solutions of the incompressible Euler equations
which dissipate the total kinetic energy and are H\"older-continuous with exponent $\theta$.
A famous conjecture of Onsager states the existence of such dissipative solutions with any H\"older exponent $\theta<\frac{1}{3}$. Our theorem is the first result in this direction.
\end{abstract}

\maketitle

\section{Introduction}
The Euler equations for the motion of an inviscid perfect fluid are
\begin{equation}\label{e:Euler1}
\left\{\begin{aligned}
&\partial_t v + v\cdot\nabla v + \nabla p =0\\
&\div v = 0
\end{aligned}\right.\; ,
\end{equation}
where $v(x,t)$ is the velocity vector and $p(x,t)$ is the internal pressure. In this paper we 
consider the equations in $3$ dimensions and assume the domain to be periodic, i.e. the $3$-dimensional torus $\T^3 = {\mathbb S}^1\times {\mathbb S}^1 \times {\mathbb S}^1$. 
Multiplying \eqref{e:Euler1} by $v$ itself and integrating, we obtain the formal energy balance
$$
\frac{1}{2}\frac{d}{dt}\int_{\T^3}|v(x,t)|^2\,dx=-\int_{\T^3}[((v\cdot \nabla) v)\cdot v] (x,t)\,dx.
$$
If $v$ is continuously differentiable in $x$, we can integrate the right hand side by parts and conclude that
\begin{equation}\label{e:energycons}
\int_{\T^3}|v(x,t)|^2\,dx=\int_{\T^3}|v(x,0)|^2\,dx\quad\textrm{ for all }t>0.
\end{equation}
On the other hand, in the context of $3$-dimensional turbulence it is important to consider {\em weak solutions}, where $v$ and $p$ are not necessarily differentiable. If $(v,p)$ is merely continuous, one can define weak solutions (see e.g.~\cite{OseenBook,LichtensteinBook}) by integrating \eqref{e:Euler1} over simply connected subdomains $U\subset\T^3$ with $C^1$ boundary, to obtain
the identities
\begin{equation}\label{e:Euler2}
\begin{split}
\int_Uv(x,0)\,dx=\int_Uv(x,t)\,dx+&\int_0^t\int_{\partial U}[v(v\cdot\nu)+p\nu] (x,s)\, dA (x)\, ds\,\\ 
\int_{\partial U}[v\cdot\nu] (x,t)\, dA (x)=&0 
\end{split}
\end{equation}
for all $t$. In these identities $\nu$ denotes the unit outward normal to $U$ on $\partial U$ and $dA$ denotes the usual area element. Indeed, the formulation \eqref{e:Euler2} appears {\em first} in the derivation of the Euler equations from Newton's laws in continuum mechanics, and \eqref{e:Euler1} is then {\em deduced} from \eqref{e:Euler2} for sufficiently regular $(v,p)$. 
It is also easy to see that pairs of continuous functions $(v,p)$ satisfy \eqref{e:Euler2} for all fluid elements $U$ and all
times $t$ if and only if they solve \eqref{e:Euler1} in the "modern" distributional sense
(rewriting the first line as
$\partial_t u + {\rm div}\, (v\otimes v) + \nabla p =0$).

\medskip

For weak solutions, the energy conservation \eqref{e:energycons} might be violated, and indeed, this possibility has been considered for a rather long time in the context of $3$ dimensional turbulence. In his famous note \cite{Onsager} about statistical hydrodynamics, Onsager considered 
weak solutions satisfying the H\"older condition
\begin{equation}\label{e:hoelderest}
|v(x,t)-v(x',t)|\leq C|x-x'|^\theta,
\end{equation}
where the constant $C$ is independent of $x,x'\in\T^3$ and $t$. He conjectured that 
\begin{enumerate}
\item[(a)] Any weak solution $v$ satisfying \eqref{e:hoelderest} with $\theta>\frac{1}{3}$ conserves the energy;
\item[(b)] For any $\theta<\frac{1}{3}$ there exist weak solutions $v$ satisfying \eqref{e:hoelderest} which do not conserve the energy.
\end{enumerate}
This conjecture is also very closely related to Kolmogorov's famous K41 theory \cite{Kolmogorov} for homogeneous isotropic turbulence in $3$ dimensions. We refer the interested reader to \cite{FrischBook,Robert,EyinkSreenivasan}, see also Section \ref{s:spectrum} below. 

Part (a) of the conjecture is by now fully resolved: it has first been considered by Eyink in \cite{Eyink} following Onsager's original calculations and proved by Constantin, E and Titi in \cite{ConstantinETiti}. Slightly weaker assumptions on $v$ (in Besov spaces) were subsequently shown to be sufficient for energy conservation in \cite{RobertDuchon,CCFS2007}. In contrast, until now part (b) of the conjecture remained widely open. In this paper we address specifically this question by proving the following theorem:

\begin{theorem}\label{t:main} Let $e:\mathbb [0,1]\to\R$ be a smooth positive function. For every $\theta<\frac{1}{10}$ 
there is a pair $(v,p)\in C(\T^3\times [0,1])$ with the following properties:
\begin{itemize}
\item $(v,p)$ solves the incompressible Euler equations in the sense \eqref{e:Euler2};
\item $v$ satisfies \eqref{e:hoelderest};
\item the energy satisfies
\begin{equation}\label{e:energy_id}
e(t) = \int_{\T^3} |v(x,t)|^2\, dx\qquad\forall t\in [0,1]\, .
\end{equation}
\end{itemize}
\end{theorem}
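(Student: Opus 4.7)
\medskip

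\noindent\emph{Proof plan.} Following the strategy of \cite{DS3}, I would construct $v$ as the limit of a sequence $(v_q, p_q, \mathring R_q)$ of smooth triples solving the Euler--Reynolds system
\begin{equation*}
\partial_t v_q + \div (v_q\otimes v_q) + \nabla p_q = \div \mathring R_q,\qquad \div v_q = 0,
\end{equation*}
where $\mathring R_q$ is a symmetric traceless ``Reynolds stress'' error. At each stage I introduce two geometric parameters: a frequency $\lambda_q = \lceil a^{b^q}\rceil$ (with $a\gg 1$ large and $b>1$ chosen just above $1$) and an amplitude $\delta_q = \lambda_q^{-2\beta}$, where $\beta$ is slightly larger than the target H\"older exponent $\theta$. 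The inductive estimates to be propagated are, schematically,
\begin{equation*}
\|\mathring R_q\|_0 \lesssim \delta_{q+1}\lambda_q^{-3\gamma},\quad
\|v_q\|_1 \lesssim \delta_q^{1/2}\lambda_q,\quad
\Bigl|e(t) - \tfrac12\int_{\T^3}|v_q|^2\Bigr| \lesssim \delta_{q+1},
\end{equation*}
for a small loss $\gamma>0$. The initial triple is taken to be $v_0\equiv 0$ with a mollified stress adapted to $e(t)$.

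\medskip

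\noindent The crucial step is the construction of the perturbation $w_{q+1}=v_{q+1}-v_q$. First I would mollify $v_q$ and $\mathring R_q$ at a length scale $\ell_q$ (of the order of $\lambda_q^{-1}\delta_q^{-1/2}$, up to logarithmic factors), so that the stress is essentially stationary on timescales $\sim \|v_q\|_1^{-1}$. Then, on a partition of $[0,1]$ into intervals of that size, I freeze the time variable and invert the stress using a superposition of stationary Beltrami waves: on each piece write the (suitably normalized) stress as a convex combination of rank-one tensors $k\otimes k$ with $k$ in a fixed finite set of rational directions (via the geometric lemma of Nash--type from \cite{DS3}), and cook up $w_{q+1}$ as
\begin{equation*}
w_{q+1}(x,t) = \sum_k a_k(x,t)\, B_k\, e^{i\lambda_{q+1} k\cdot\Phi(x,t)} + \text{c.c.},
\end{equation*}
where $B_k$ is a Beltrami mode, $a_k$ are slowly varying amplitudes carrying the square root of the stress plus the energy correction $e(t)-\int|v_q|^2$, and $\Phi$ is the backward flow of the mollified $v_q$, introduced to keep the oscillations materially transported (this is the ``phase function'' trick that upgrades earlier $C^{1/10-}$-in-time constructions). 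A partition of unity in time glues the pieces together.

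\medskip

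\noindent Once $w_{q+1}$ is fixed, the new stress has three contributions: the transport/Nash error coming from $D_t w_{q+1}$ and from the non-stationarity of $\mathring R_q$, the oscillation error from $w_{q+1}\otimes w_{q+1}-\mathring R_q$, and a low-frequency error from mollification. Each is controlled by standard stationary-phase bounds and by inverting $\div$ with a Calder\'on--Zygmund operator of order $-1$, producing a gain $\lambda_{q+1}^{-1}$ that must beat the large factor $\|v_q\|_1$ coming from the transport step. The resulting algebraic constraint on $b$ and $\beta$ is what forces $\theta<\tfrac1{10}$; verifying that $b$ can be chosen close enough to $1$ and $\beta$ close enough to $\tfrac1{10}$ for all three errors to fit under $\delta_{q+2}\lambda_{q+1}^{-3\gamma}$ is the main technical obstacle, and is the reason for the improved exponent compared to \cite{DS3}.

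\medskip

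\noindent Finally, the bound $\|v_{q+1}-v_q\|_0\lesssim \delta_{q+1}^{1/2}$ together with $\|v_{q+1}-v_q\|_1\lesssim \delta_{q+1}^{1/2}\lambda_{q+1}$ yields, by interpolation and the choice $\delta_q=\lambda_q^{-2\beta}$, a geometric convergence in $C^\theta(\T^3\times[0,1])$ for any $\theta<\beta$. The limit $v$ is continuous, divergence-free, solves \eqref{e:Euler2} (the associated pressure $p$ being recovered as the limit of $p_q$), and by the third inductive estimate satisfies \eqref{e:energy_id}. The H\"older bound \eqref{e:hoelderest} holds with the prescribed $\theta<\tfrac1{10}$.
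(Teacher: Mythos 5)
Your proposal follows the general iterative Euler--Reynolds / Beltrami-wave convex-integration scheme used in the paper, and the closing argument (interpolating the $C^0$ and $C^1$ bounds for $v_{q+1}-v_q$ to get $C^\theta$ convergence) is essentially the same as Step 2 of the paper's proof. However, the construction of the perturbation $w_{q+1}$ that you describe is a genuinely different technique. You propose phases $e^{i\lambda_{q+1}k\cdot\Phi(x,t)}$ with $\Phi$ the backward flow of the mollified velocity, glued together by a partition of unity in time; this is the ``Lagrangian phase'' technology of later refinements, not of this paper. Here the phases are the discretized objects $\phi^{(j)}_{k,\mu}(v_\ell,\lambda t)$ from \cite{DS3}, built out of a partition of unity in \emph{velocity} space (the $\alpha_l$'s), which are only \emph{approximately} material: the transport defect $\partial_\tau\phi^{(j)}_{k,\mu}+i(k\cdot v_\ell)\phi^{(j)}_{k,\mu}$ has size $\mu^{-1}$, see \eqref{e:transportphi}. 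Moreover the paper never introduces a time cutoff: it works on $\T^3\times\Sp^1$ (periodic in time) and mollifies jointly in space and time, so no gluing is needed.

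This distinction is not cosmetic; it is what determines the threshold. The paper's approximate transport produces a Reynolds error of size $\sim\sqrt\delta\,\mu^{-1}\lambda^\alpha$ (the term $\mathring R^8_1$ in Proposition \ref{p:Reynolds}), which must be balanced against the oscillation/Nash error $\sim\sqrt\delta\,D\mu\lambda^{\alpha-1}$; the optimal balance $\mu^2 D\sim\lambda$ yields a gain of only $\lambda^{\alpha-\sfrac12}$, hence $\theta<\sfrac1{10}$, and it also forces the constraint $\bar\delta\leq\tfrac12\delta^{\sfrac32}$ in Proposition~\ref{p:iterate} (equivalently $b\geq\sfrac32$ in the double-exponential iteration, not ``$b$ just above $1$'' as you write). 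With an exact flow map the transport defect disappears entirely, the gain becomes a full $\lambda^{-1}$, and $b$ close to $1$ is admissible, landing at $\theta<\sfrac15$. So your sketch, if actually carried out, proves more than the theorem claims; conversely, if you want to reproduce the paper's $\sfrac1{10}$ threshold and understand why it is natural for this scheme, you must drop the flow map and use the $\phi^{(j)}_{k,\mu}$'s. Your remark that the main obstacle is ``verifying that $b$ can be chosen close enough to $1$'' does not match either route: with the paper's scheme $b\geq\sfrac32$ is forced, and with the flow-map scheme $b\downarrow1$ is routine; the real work in either case is the stationary-phase estimate on the new stress (Proposition \ref{p:schauder} here). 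A minor slip: your inductive energy bound carries a spurious factor $\tfrac12$, whereas the target identity \eqref{e:energy_id} is $e(t)=\int|v|^2$.
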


This is the first result in the direction of part (b) of Onsager's conjecture, where H\"older-continuous
solutions are constructed. Prior to this result, there have been several constructions of weak solutions violating \eqref{e:energycons} in \cite{Scheffer93,Shnirelman1,Shnirelmandecrease,DS1,DS2}, but the solutions constructed in these papers are not continuous. The ones of \cite{Scheffer93, Shnirelman1} are just square summable functions of time and space, whereas 
the example of \cite{Shnirelmandecrease} was the first to be in the energy space and the constructions of \cite{DS1,DS2} gave bounded solutions. Recently, in \cite{DS3} we have constructed {\it continuous} weak solutions, but no H\"older exponent was given.

\begin{remark}\label{r:time_and_pressure}
In fact our proof of Theorem \ref{t:main} yields some further regularity properties of the pair $(v,p)$.
First of all, our solutions $v$ are H\"older-continuous in {\em space and time}, i.e. there is a constant $C$ such that
\begin{align*}
|v(x,t) - v(x', t')|&\leq C \left(|x-x'|^\theta + |t-t'|^\theta\right)
\end{align*}
for all pairs $(x,t), (x',t')\in \T^3\times [0,1]$.

From the equation $\Delta p = - {\rm div}\, {\rm div}\, (v\otimes v)$ (normalizing
the pressure so that $\int p (x,t)\, dx = 0$) and standard Schauder estimates one can easily derive H\"older regularity
in space for $p$ as well, with H\"older exponent $\theta$. A more careful 
estimate\footnote{personal communication with L.~Silvestre} improves the exponent to
$2\theta$. It is interesting to observe that in fact our
scheme produces pressures $p$ which have that very H\"older regularity in {\em time and space}, namely
\[
|p(x,t) - p(x', t')|\leq C \left(|x-x'|^{2\theta} + |t-t'|^{2\theta}\right)\, .
\]
\end{remark}

\medskip

\subsection{The energy spectrum}\label{s:spectrum}

The energy spectrum $E(\lambda)$ gives the decomposition of the total energy by wavenumber, i.e. 
\[
\int|v|^2\,dx=\int_0^{\infty}E(\lambda)d\lambda.
\]
One of the cornerstones of the K41 theory is the famous Kolmogorov spectrum
\[
E(\lambda)\sim \epsilon^{\sfrac23}\lambda^{-\sfrac53}
\]
for wave numbers $\lambda$ in the inertial range for fully developed 3-dimensional turbulent flows, where $\epsilon$ is the energy dissipation rate. 
For dissipative weak solutions of the Euler equations as conjectured by Onsager, this would be the expected energy spectrum for all $\lambda\in (\lambda_0,\infty)$.

Our construction, based on the scheme and the techniques introduced in \cite{DS3}, allows for a rather precise analysis of the energy spectrum. In a nutshell the scheme can be described as follows. We construct a sequence of (smooth) approximate solutions to the Euler equations $v_k$, where the error is measured by the (traceless part of the) Reynolds stress tensor $\mathring{R}_k$, cf.~\eqref{e:euler_reynolds} and \eqref{e:def_R}. The construction is explicitly given by a formula of the form
\begin{equation}\label{e:scheme}
v_{k+1}(x,t)=v_k(x,t)+W\bigl(v_k(x,t),R_k(x,t);\lambda_k x,\lambda_k t\bigr)+\textrm{corrector}.
\end{equation}
The corrector is to ensure that $v_{k+1}$ remains divergence-free. The vector field $W$ consists of periodic Beltrami flows in the
fast variables (at frequency $\lambda_k$), which are modulated in amplitude and phase depending on $v_k$ and $R_k$.  
More specifically, the amplitude is determined by the error $R_k$ from the previous step, so that
\begin{align}
\|v_{k+1}-v_k\|_0&\lesssim \delta_k^{\sfrac12},\label{e:C0}\\
\|v_{k+1}-v_k\|_1&\lesssim \delta_k^{\sfrac12}\lambda_k,\label{e:C1_intro}
\end{align}
where $\delta_k=\|\mathring{R}_k\|_{C^0}$.

The frequencies $\lambda_k$ are therefore the active modes in the Fourier spectrum of the velocity field in the limit. Since the sequence $\lambda_k$ diverges rather fast, it is natural to think of \eqref{e:scheme} as iteratively defining the Littlewood-Paley pieces at frequency $\lambda_k$. 
Following \cite{Constantin1997} we can then estimate the (Littlewood-Paley-) {\it energy spectrum} in the limit as 
\[
E(\lambda_k)\sim\frac{\langle |v_{k+1}-v_k|^2\rangle}{\lambda_k}
\]
for the active modes $\lambda_k$, where $\langle\cdot\rangle$ denotes the average over the space-time domain. Since $W$ is the superposition of finitely many 
Beltrami modes, we can estimate $\langle|v_{k+1}-v_k|^2\rangle\sim\delta_k$. Thus, both the regularity of the limit and its energy spectrum are determined by the 
rates of convergence $\delta_k\to 0$ and $\lambda_k\to\infty$.

In \cite{DS3} it was shown (cp.~Proposition 2.2 and its proof) that $W$ can be chosen so that
\begin{equation}
\|\mathring{R}_{k+1}\|_{C^0}\leq C(v_k,\mathring{R}_k) \lambda_k^{-\gamma}\label{e:error}
\end{equation}
for some fixed $0<\gamma\leq 1$. By choosing the frequencies $\lambda_k\to\infty$ sufficiently fast, 
$C^0$ convergence of this scheme follows easily. However, in order to obtain a rate on the divergence of $\lambda_k$ we  
need to obtain an estimate on the error in \eqref{e:error} with an explicit dependence on $v_k$ 
and $\mathring{R}_k$. This is achieved in Proposition \ref{p:Reynolds} and forms a key part of the paper. 
Roughly speaking, our estimate has the form
\begin{equation}\label{e:errornew}
\|\mathring{R}_{k+1}\|_{C^0}\lesssim \frac{\delta_k^{\sfrac12}\|v_k\|_{C^1}}{\lambda_k^\gamma},
\end{equation}
with $\gamma\sim \tfrac12$. A first attempt (based on experience with the isometric embedding problem, see below) at obtaining a rate on $\lambda_k$ would then go as follows: in order to decrease the error
in \eqref{e:errornew} by a fixed factor $K>1$ (i.e. $\delta_{k+1}\leq \frac{1}{K}\delta_k$), we choose $\lambda_k$ accordingly, so that
\begin{equation}\label{e:iteratelambda}
\lambda_k^\gamma\sim K \|v_k\|_{C^1}\delta_k^{-\sfrac12}.
\end{equation}
Using \eqref{e:C1_intro} we can then obtain an estimate on $\|v_{k+1}\|_{C^1}$ and iterate. However, it is easy to see that this leads to super-exponential 
growth of $\lambda_k$ whenever $\gamma<1$. From this one can only deduce the energy spectrum $E(\lambda)\sim\lambda^{-1}$ and no H\"older regularity. 

Our solution to this problem is to force a double-exponential convergence of the scheme, see Section \ref{s:iteration}. In this way the finite H\"older regularity in Theorem \ref{t:main} 
as well as the energy spectrum
\begin{equation}\label{e:spectrum}
E(\lambda_k)\lesssim \lambda_k^{-(\sfrac65-\eps)}
\end{equation}
can be achieved, see Remark \ref{r:Physica_Z}. It is quite remarkable, and much akin to the Nash-Moser iteration, that the more rapid (super-exponential) convergence of the scheme 
leads to a better regularity in the limit.

\medskip

 An underlying physical intuition in the turbulence theory is that the flux in the energy cascade should be controlled by local interactions, see \cite{Kolmogorov,Onsager,Eyink,CCFS2007}. A consequence for part (b) of Onsager's conjecture is that in a dissipative solution the active modes, among which the energy transfer takes place, should be (at most) exponentially distributed. Indeed, Onsager explicitly states in \cite{Onsager}  (cp. also  \cite{EyinkSreenivasan}) that this should be the case. 
 
 For the scheme \eqref{e:scheme} in this paper the interpretation is that $\lambda_k$ should increase at most exponentially. As seen in the discussion above, this would only be possible with $\gamma=1$ in the estimate \eqref{e:errornew}. On the other hand, it is also easy to see that with $\gamma=1$ the estimate indeed leads to Onsager's critical $\tfrac13$ H\"older exponent as well as the Kolmogorov spectrum. Indeed, from \eqref{e:iteratelambda} together with \eqref{e:errornew} and \eqref{e:C1_intro} we would obtain $\delta_k\sim K^{-k}$ and 
$\lambda_k\sim K^{\sfrac32k}$, leading to $E(\lambda_k)\sim \lambda_k^{-\sfrac53}$. Thus, our scheme provides yet another route towards understanding the necessity of local interactions as well as towards the Kolmogorov spectrum, albeit one that {\it does not involve considerations on the energy cascade} but is rather based on the {\it ansatz} \eqref{e:scheme}.

\medskip

Onsager's conjecture has also been considered on shell-models \cite{KatzPavlovic,CFP2007,CFP2010}, whose derivation is motivated by the intuition on locality of interactions. Roughly speaking, the Euler equations is considered in the Littlewood-Paley decomposition, but only nearest neighbor interactions in frequency space are retained in the nonlinear term, leading to an infinite system of coupled ODEs. The analogue of both part (a) and (b) of Onsager's conjecture has been proven in \cite{CFP2007,CFP2010}, in the sense that the ODE system admits a unique fixed point which exhibits a decay of (Fourier) modes consistent with the Kolmogorov spectrum. 

Although our Theorem \ref{t:main} and the corresponding spectrum \eqref{e:spectrum} falls short of the full conjecture, it highlights an important feature of the Euler equations that cannot be seen on such shell models: the critical $\tfrac13$ exponent of Onsager is not just the borderline between energy conservation and dissipation in the sense of part (a) and (b) above. For exponents $\theta<\tfrac13$ one should expect an entirely different behavior of weak solutions altogether, namely the type of non-uniqueness and flexibility that usually comes with the $h$-principle of Gromov \cite{Gromov}. 

\medskip

\subsection{$h$-principle and convex integration}
Our iterative scheme is ultimately based on the convex integration technique introduced by Nash in \cite{Nash}
to produce $C^1$ isometric embeddings of
Riemannian manifolds in low codimension, and vastly generalized by Gromov \cite{Gromov}, although several modifications of this technique are required (see the Introduction of \cite{DS3}). Nevertheless, in line with other results proved using a convex integration technique, our construction again adheres to the usual features of the $h$-principle. In particular,
as in \cite{DS3} we are concerned in this paper with the {\it local} aspects of the $h$-principle. For the Euler equations this means that we only treat the case of a periodic space-time domain instead of an initial/boundary value problem. Also, it should be emphasized that although in Theorem \ref{t:main} the existence of {\it one} solution is stated, the method of construction leads to an {\it infinite number} of solutions, as indeed any instance of the $h$-principle does. We refer the reader to the survey \cite{DSSurvey} for the type of (global) results that could be expected even in the current H\"older-continuous setting.

It is of certain interest to notice that in the isometric embedding problem a phenomenon entirely analogous to the
Onsager's conjecture occurs. Namely, if we consider $C^{1, \alpha}$ isometric embeddings in codimension $1$, then
it is possible to prove the $h$-principle for sufficiently small exponents $\alpha$, whereas one can show the absence
of the $h$-principle (and in fact even some rigidity statements) if the H\"older exponent is sufficiently large. This
phenomenon was first observed by Borisov (see \cite{BorisovRigidity1} and \cite{Borisov65}) and proved in greater
generality and with different techniques in \cite{CDSz}. In particular the proofs given in \cite{CDSz} of both the $h$-principle 
and the rigidity statements share many similarities with the analogous results for the Euler equations.

The connection between the existence of dissipative weak solutions of Euler and the convex integration techniques
used to prove the $h$-principle in geometric problems (and unexpected solutions to differential inclusions) was
first observed in \cite{DS1}. Since then these techniques have been used successfully in other equations of fluid dynamics:
we refer the interested reader to the survey article \cite{DSSurvey}.

\medskip

\subsection{Loss of derivatives and regularization}

Finally, let us make a technical remark. Since the negative power of $\lambda$ in estimate \eqref{e:error} comes from a stationary-phase type argument (Proposition \ref{p:schauder} in Section \ref{s:est1}), the constant $C(v_k,\mathring{R}_k)$ will then depend on higher derivatives of $v_k$ (and of $\mathring{R}_k$). In fact, with $\theta\to \frac{1}{10}$ the number of derivatives $m$ required in the estimates converges to $\infty$. To overcome this {\it loss of derivative} problem, we use the well-known device from the Nash-Moser iteration to mollify $v_k$ and $\mathring{R}_k$ at some appropriate scale $\ell_k$. Although we are chiefly interested in derivative bounds in space, due to the nature of the equation such bounds are connected to derivative bounds in time, necessitating a mollification in space and time. To simplify the presentation we will therefore treat time also as a periodic variable and
we will therefore construct solutions on $\mathbb T^3 \times \mathbb S^1$ rather than on $\mathbb T^3\times [0,1]$.

\medskip

\subsection{Acknowledgements}

The first author acknowledges the support of SFB Grant TR71, the second author acknowledges the support of ERC Grant Agreement No.~277993.

\section{Iteration with double exponential decay}\label{s:iteration}

\subsection{Notation in H\"older norms}

In the following $m=0,1,2,\dots$, $\alpha\in (0,1)$, and $\beta$ is a multiindex. We introduce the usual (spatial) 
H\"older norms as follows.
First of all, the supremum norm is denoted by $\|f\|_0:=\sup_{\T^3}|f|$. We define the H\"older seminorms 
as
\begin{equation*}
\begin{split}
[f]_{m}&=\max_{|\beta|=m}\|D^{\beta}f\|_0\, ,\\
[f]_{m+\alpha} &= \max_{|\beta|=m}\sup_{x\neq y}\frac{|D^{\beta}f(x)-D^{\beta}f(y)|}{|x-y|^{\alpha}}\, .
\end{split}
\end{equation*}
The H\"older norms are then given by
\begin{eqnarray*}
\|f\|_{m}&=&\sum_{j=0}^m[f]_j\\
\|f\|_{m+\alpha}&=&\|f\|_m+[f]_{m+\alpha}.
\end{eqnarray*}

For functions depending on {\em space and time}, we define spatial H\"older norms as
\[
\|v\|_r=\sup_{t}\|v(\cdot, t)\|_r\, ,
\]
whereas the H\"older norms in space {\em and time} will be denoted by $\|\cdot\|_{C^r}$.

\subsection{The iterative scheme} We follow here \cite{DS3} and introduce the Euler-Reynolds system (cp. with Definition 2.1 therein).

\begin{definition}\label{d:euler_reynolds}
Assume $v, p, \mathring{R}$ are $C^1$ functions on $\T^3\times \mathbb S^1$
taking values, respectively, in $\R^3, \R, \S^{3\times 3}_0$. We say that they solve the Euler-Reynolds system if 
\begin{equation}\label{e:euler_reynolds}
\left\{\begin{array}{l}
\partial_t v + \div (v\otimes v) + \nabla p =\div\mathring{R}\\ \\
\div v = 0\, .
\end{array}\right.
\end{equation} 
\end{definition}

The next proposition is the main building block of our construction: the proof of Theorem \ref{t:main}
is achieved by applying it inductively to generate a suitable sequence of solutions to \eqref{e:euler_reynolds}
where the right hand side vanishes in the limit.

\begin{proposition}\label{p:iterate}
Let $e$ be a smooth positive function on $\mathbb S^1$. There exist positive constants $\eta,M$ depending on $e$ with the following property.

Let $\delta \leq 1$ be any positive number and $(v, p, \mathring{R})$  a solution of the Euler-Reynolds
system \eqref{e:euler_reynolds} in $\T^3\times \mathbb S^1$ such that
\begin{equation}\label{e:energy_hyp}
\tfrac{3\delta}{4} e(t) \leq e(t) - \int |v|^2 (x,t)\, dx \leq \tfrac{5\delta}{4} e(t) \qquad \forall t\in \mathbb S^1\, ,
\end{equation}
\begin{equation}\label{e:Reynolds_hyp}
\|\mathring{R}\|_0 \leq \eta \delta\, 
\end{equation}
and
\begin{equation}\label{e:C1}
D := \max \{ 1,\|\mathring{R}\|_{C^1}, \|v\|_{C^1}\}\, .
\end{equation}

For every $\bar\delta \leq \frac{1}{2} \delta^{\frac{3}{2}}$ and every $\eps>0$  
there exists a second triple $(v_1, p_1, \mathring{R}_1)$ which solves
as well the Euler-Reynolds system and satisfies the following estimates:
\begin{equation}\label{e:energy_est}
\tfrac{3\bar{\delta}}{4} e(t) \leq e(t) - \int |v_1|^2 (x,t)\, dx\leq \tfrac{5\bar{\delta}}{4} e(t) \qquad \forall t\in \mathbb S^1\, ,
 \end{equation}
\begin{equation}\label{e:Reynolds_est}
\|\mathring{R}_1\|_{0}\leq \eta\bar{\delta}\, ,
\end{equation}
\begin{equation}\label{e:C^0_est}
\|v_1 - v\|_{0} \leq M \sqrt{\delta}\, ,
\end{equation}
\begin{equation}\label{e:C0_pressure}
\|p_1-p\|_0 \leq M^2 \delta\, ,
\end{equation}
and
\begin{equation}\label{e:C1_est}
\max \{\|v_1\|_{C^1}, \|\mathring{R}_1\|_{C^1}\} \leq A  \delta^{\frac{3}{2}}\left(\frac{D}{\bar{\delta}^{2}}\right)^{1+\eps}\,
\end{equation}
where the constant $A$ depends on $e$, $\eps>0$ and $\|v\|_0$.
\end{proposition}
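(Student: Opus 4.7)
The plan is to follow the Beltrami-wave ansatz \eqref{e:scheme} of \cite{DS3}, keeping careful track of the explicit dependence of every error term on $D$, $\delta$ and the new error size $\bar\delta$ so as to reach the quantitative $C^1$ bound \eqref{e:C1_est}. The construction has four free parameters --- a spatial/temporal mollification scale $\ell$, a fast frequency $\lambda$, an energy-regulating function $\rho(t)$, and the finite set of Beltrami directions $B_k$ --- and the heart of the argument is the joint choice of $\ell$ and $\lambda$ at the end.

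First I would mollify $(v,\mathring{R})$ in space and time at scale $\ell$, producing a smoothed Euler-Reynolds pair $(v_\ell,\mathring{R}_\ell)$ on which the high-frequency perturbation is built. This costs $O(\ell D)$ in $C^0$ but, as in Nash-Moser, buys arbitrary regularity at cost $\ell^{-m}D$. Following \cite{DS3} I would then define $v_1 = v_\ell + w_o + w_c$, where the principal part is
\[
w_o(x,t) \;=\; \sum_k a_k\bigl(v_\ell,\mathring{R}_\ell,\rho\bigr)(x,t)\, B_k\, e^{i\lambda\, k\cdot\Phi(x,t)},
\]
the phase $\Phi$ is transported by $v_\ell$ on short time intervals (glued by a partition of unity in $t$ of scale $\sim \|v_\ell\|_{C^1}^{-1}$), the amplitudes $a_k$ are chosen via the Beltrami algebraic identity so that $\sum_k |a_k|^2 B_k\otimes B_k = \rho(t)\,\Id - \mathring{R}_\ell$ (so that the leading self-interaction of $w_o$ exactly cancels $\mathring{R}_\ell$), and $w_c$ is the standard high-frequency corrector restoring $\div v_1 = 0$. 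The scalar $\rho(t)$ is tuned so that the leading contribution $\int 3\rho\,dx$ to $\int|w_o|^2$ absorbs the gap $e(t)-\int|v|^2$ down to $\bar\delta\,e(t)$; this gives \eqref{e:energy_est} once the $O(\lambda^{-1})$ stationary-phase error in $\int|w_o|^2$ is controlled.

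The new stress $\mathring{R}_1$ is defined by applying the inverse-divergence operator $\RR$ of \cite{DS3} to the three classical pieces of the error: an oscillation term, a transport term, and a Nash/commutator term. The key quantitative input is the stationary-phase Schauder estimate (Proposition \ref{p:schauder} in Section \ref{s:est1}), which gains a factor $\lambda^{-(1-\eps/2)}$ on the oscillation part by interpolating stationary-phase gain against higher-derivative cost; the mollification error contributes $O(\ell D)$ and the transport error contributes $O(\lambda^{-1}\delta^{1/2}D)$ on each short time slab. Choosing $\ell\sim \bar\delta/D$ and $\lambda\sim \bigl(\delta^{1/2}D/\bar\delta\bigr)^{1/(1-\eps/2)}$ then balances all three contributions to give $\|\mathring{R}_1\|_0\leq \eta\bar\delta$, and the hypothesis $\bar\delta\leq \tfrac12\delta^{3/2}$ is exactly what keeps the various exponents compatible. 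The $C^1$ bounds follow from $\|v_1\|_{C^1}\lesssim \delta^{1/2}\lambda$ (and analogously for $\mathring{R}_1$), which upon inserting the chosen $\lambda$ yields precisely \eqref{e:C1_est} with exponent $1+\eps$; the $C^0$ estimates \eqref{e:C^0_est}--\eqref{e:C0_pressure} are direct from the ansatz once $\eta$ is chosen small enough in terms of $e$.

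The main obstacle I expect is exactly the balancing act in this last step: the oscillation, transport, mollification and Nash contributions to $\mathring{R}_1$ scale differently in the four parameters $(\ell,\lambda,\delta,D)$, and producing the clean exponent $1+\eps$ on $D/\bar\delta^{2}$ (rather than some larger exponent that would grow with the number of Schauder derivatives) demands a sharp allocation of the loss-of-derivatives penalty between the stationary-phase gain $\lambda^{-(1-\eps/2)}$ and the mollification cost $\ell^{-m}$. This is precisely the point at which the double-exponential iteration of Section \ref{s:iteration} becomes both necessary --- to let the enlarged $C^1$ norm \eqref{e:C1_est} still be absorbed at the next step --- and possible, via the fast shrinking $\bar\delta\leq \tfrac12\delta^{3/2}$.
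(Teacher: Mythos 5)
Your proposal departs from the paper's construction in two structurally important ways, and the departure leads to a concrete gap in the parameter arithmetic.

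\textbf{A genuinely different ansatz.} You propose to build the Beltrami perturbation on \emph{transported phases} $e^{i\lambda k\cdot\Phi(x,t)}$, with $\Phi$ flowed by $v_\ell$ over short time slabs glued by a partition of unity at scale $\sim\|v_\ell\|_{C^1}^{-1}$, and to take $v_1=v_\ell+w_o+w_c$. The paper does neither: the base velocity is $v$ (not $v_\ell$), and instead of a transported phase it uses the fixed oscillation $e^{i\lambda k\cdot x}$ together with the $\mu$-discretized Galilean phase functions $\phi^{(j)}_{k,\mu}(v_\ell,\lambda t)=\sum_l\alpha_l(\mu v_\ell)e^{-i(k\cdot l/\mu)\lambda t}$, so that the transport term $\partial_t w_o+v_\ell\cdot\nabla w_o$ is small to order $\mu^{-1}$ rather than $\lambda^{-1}$. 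That is why a second large parameter $\mu$ appears in \eqref{e:range} and in the key balance $\mu^2 D=\lambda$ of Section~\ref{s:iterate_proof}. Your transported-phase ansatz eliminates $\mu$ altogether and replaces it with a time-slab length; this can be made to work, but it requires a separate analysis of the gluing errors and of the loss of control on $\nabla\Phi-\Id$ on a slab, neither of which your sketch addresses. Using $v_\ell$ as the base also forces a mollification commutator $\partial_t v_\ell+\div(v_\ell\otimes v_\ell)-\bigl(\partial_t v+\div(v\otimes v)\bigr)_\ell$ into the new Reynolds stress, which you do not account for; the paper instead keeps $v$ as the base and pays with the terms $\mathring R_\ell-\mathring R$ and $w\otimes(v-v_\ell)$ in \eqref{e:def_R_1}.

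\textbf{The parameter choice does not close.} You set $\ell\sim\bar\delta/D$ and $\lambda\sim(\delta^{1/2}D/\bar\delta)^{1/(1-\eps/2)}$. For the stationary-phase Schauder estimate (Proposition~\ref{p:schauder}) to deliver a genuine gain, you need $\lambda$ to dominate the derivative loss from mollification, i.e.\ $\lambda\gtrsim\ell^{-(1+\omega)}$ as required in \eqref{e:range}. With your values,
\[
\lambda\ell\;\sim\;\Bigl(\tfrac{\delta^{1/2}D}{\bar\delta}\Bigr)^{1+\eps/2}\cdot\tfrac{\bar\delta}{D}
\;=\;\delta^{\frac{1}{2}(1+\eps/2)}\Bigl(\tfrac{D}{\bar\delta}\Bigr)^{\eps/2},
\]
and since $\bar\delta\leq\tfrac12\delta^{3/2}$ and $D\geq1$ one only gets $\lambda\ell\gtrsim\delta^{1/2-\eps/2}$, which tends to $0$ as $\delta\to0$. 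So $\lambda<\ell^{-1}$, the higher-derivative terms $[a_k]_m\lesssim\sqrt\delta\,\ell^{-m}$ in Proposition~\ref{p:schauder}(ii) are \emph{not} suppressed by $\lambda^{-m}$, and the claimed $\lambda^{-(1-\eps/2)}$ gain on the oscillation error never materializes. The paper's choice $\lambda=\Lambda_v(D\delta/\bar\delta^2)^{1+\eps}$ is essentially the square (in the $\bar\delta$-dependence) of yours precisely because it must dominate both $\mu D$ and $\ell^{-1}$ simultaneously; the extra factor of $\bar\delta^{-1}$ is not cosmetic, and it is what produces $(D/\bar\delta^2)^{1+\eps}$ in \eqref{e:C1_est} rather than a bound in $D/\bar\delta$.

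In short, your route is a legitimate alternative direction (it anticipates the transported-phase constructions used in later improvements of Onsager's conjecture), but as written it has a quantitative gap: the chosen $\lambda$ is too small to make the stationary-phase estimate effective given the mollification scale $\ell$, and the new error terms introduced by the time gluing and by taking $v_\ell$ as base are not estimated.
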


We next show how to conclude Theorem \ref{t:main} from Proposition \ref{p:iterate}:
the rest of the paper is then devoted to prove the Proposition. 

\begin{proof}[Proofs of Theorem \ref{t:main}]
Let $e$ be as in the statement, i.e. smooth and positive. Without loss of generality 
we can assume that $e$ is defined
on $\R$, with period $2\pi$, and it is smooth and positive on the entire real line.

\medskip

{\bf Step 1.} Fix any arbitrarily small number $\eps>0$ and let $a,b\geq \frac{3}{2}$ be numbers whose choice will be specified later and will depend only on $\eps$. We define $(v_0, p_0, \mathring{R}_0)$
to be identically $0$ and we apply Proposition \ref{p:iterate} inductively with
\[
\delta_n=a^{-b^n}
\]
to produce a sequence $(v_n, p_n, \mathring{R}_n)$ of solutions of the Euler-Reynolds system and numbers $D_n$ satisfying the following requirements:
\begin{equation}\label{e:energy_sequence}
\tfrac{3\delta_n}{4} e(t) \leq e(t) - \int |v_1|^2 (x,t)\, dx\leq \tfrac{5\delta_n}{4} e(t) \qquad \forall t\in 
\mathbb S^1\, ,
\end{equation}
\begin{equation}\label{e:Reynolds_sequence}
\|\mathring{R}_n\|_{0}\leq \eta\delta_n\, ,
\end{equation}
\begin{equation}\label{e:C^0_sequence}
\|v_n - v_{n-1}\|_{0} \leq M \sqrt{\delta_{n-1}}\, ,
\end{equation}
\begin{equation}\label{e:pressure_seq}
\|p_n-p_{n-1}\|_0 \leq M^2 \delta_{n-1}\, .
\end{equation}
\begin{equation}\label{e:C1_sequence}
D_n = \max \{1,\|v_n\|_{C^1}, \|\mathring{R}_n\|_{C^1} \} .
\end{equation}
Observe that with this choice of $\delta_n$ and since $a,b\geq \frac{3}{2}$, $(v_n, p_n)$ converges uniformly to a continuous pair $(v,p)$
and in particular 
\[
\|v_n\|_0\leq M\sum_{j=0}^\infty a^{-\frac{1}{2} b^j}\leq M\sum_{j=0}^\infty \left(\frac{3}{2}\right)^{-\frac{1}{2} \left(\frac{3}{2}\right)^j} .
\]
Therefore, $\|v_n\|_0$ is bounded uniformly, with a constant depending only on $e$.
By Proposition \ref{p:iterate} we have
\[
D_{n+1} \leq A  \delta_n^{\frac{3}{2}} \left(\frac{D_n}{\delta_{n+1}^2}\right)^{1+\eps}\, .
\]
Since $A$ is depending only on $e,\eps$ and $\|v_n\|_0$, which in turn can be estimated in terms of $e$, we can assume that $A$ depends only on $\eps$ and $e$.

We claim that, for a suitable choice of the constants $a,b$ there is a third constant $c>1$ for
which we inductively have the inequality
\[
D_n\leq a^{cb^n}.
\]
Indeed, for $n=0$ this is obvious. 
Assuming the bound for $D_n$, we obtain for $D_{n+1}$
\begin{equation*}
\begin{split}
D_{n+1}&\leq A\frac{a^{-\frac{3}{2}b^n}a^{c(1+\eps) b^n}}{a^{-2(1+\eps)b^{n+1}}}
= Aa^{(-\frac{3}{2}+(1+\eps)(c+2b))b^n}\,.
\end{split}
\end{equation*}
We impose $\eps<\frac{1}{4}$ and set
\[
b=\frac{3}{2}\quad\mbox{and}\quad c=\frac{3(1+2\eps)}{1-2\eps}+\eps.
\]
This choice leads to
\begin{equation*}
cb-\left(-\tfrac{3}{2} + (1+\eps)(c+2b)\right)=\frac{\eps}{2}(1-2\eps)>\frac{\eps}{4}\, .
\end{equation*}
Since $b^n\geq 1$, we conclude
\[
D_{n+1}\leq \left(A a^{-\eps/4}\right) a^{cb^{n+1}}
\]
Choosing $a = A^{\sfrac{4}{\eps}}$ we conclude $D_{n+1}\leq a^{cb^{n+1}}$.

\medskip

{\bf Step 2.} Consider now the sequence $v_n$ provided in the previous step.
By \eqref{e:energy_sequence}, \eqref{e:Reynolds_sequence} and \eqref{e:C^0_sequence}  we conclude that $(v_n, p_n)$ converges uniformly to a solution $(v,p)$
of the Euler equations such that $e(t) =\int |v|^2 (x,t) dx$ for every $t\in \mathbb S^1$.
On the other hand, observe that
\[
\|v_{n+1} - v_n\|_{0} \leq M \sqrt{\delta_n} \leq M a^{-\frac{1}{2} b^n}
\]
and
\[
\|v_{n+1} - v_n\|_{C^1} \leq D_n+D_{n+1} \leq 2 a^{c b^{n+1}}\, .
\]
Therefore
\begin{equation*}
\begin{split}
\|v_{n+1}-v_n\|_{C^\theta} &\leq \|v_{n+1}-v_n\|_0^{1-\theta}\|v_{n+1}-v_n\|_{C^1}^\theta\\
&\leq 2M a^{\left(\theta cb - \frac{(1-\theta)}{2}\right) b^n}\, .
\end{split}
\end{equation*}
If 
\[
\theta < \frac{1}{1+2 cb}=\frac{1-2\eps}{10+19\eps-6\eps^2},
\] 
then $\theta cb - \frac{(1-\theta)}{2}<0$ and therefore
$\{v_n\}$ is a Cauchy sequence on $C^\theta$,
which implies that it converges in the $C^\theta$ norm. 

\medskip

We have shown that, for every $\eps<\frac{1}{4}$ and every $\theta < \frac{1-2\eps}{10+19\eps-6\eps^2}$ there is a pair $(v,p)\in C^\theta (\T^3\times \mathbb S^1, \R^3)\times C
(\T^3\times \mathbb S^1)$ as in Theorem \ref{t:main}. Letting $\eps\downarrow 0$ we obtain the conclusions
of Theorem \ref{t:main} (and indeed even the H\"older regularity in time).
\end{proof}

\begin{remark}\label{r:Physica_Z}
Using the bounds on $\delta_n$ and $D_n$ in the proof above, we can obtain an estimate on the energy spectrum of $v$.
First of all we observe (cp.~Section \ref{s:setup_for_prop}) that in Fourier space 
$v_{n+1}-v_n$ is essentially supported in a frequency band around wavenumber $\lambda_n$. For $\lambda_n$ we then have the relation
$$
\|v_{n+1}-v_n\|_{C^1}\sim \|v_{n+1}-v_n\|_{C^0}\,\lambda_n.
$$
Therefore, Step 2 of the proof above implies
$$
\lambda_n\sim a^{(bc+\tfrac{1}{2})b^n},
$$
and consequently the energy spectrum satisfies
$$
E(\lambda_n)\sim \frac{\delta_n}{\lambda_n}\sim a^{-(\tfrac{3}{2}+bc)b^n}\sim \lambda_n^{-\frac{3+2bc}{1+2bc}}\,.
$$
Plugging in the choice of $b,c$ from Step 1 of the proof yields in the limit $\eps\to 0$
$$
E(\lambda_n)\sim \lambda_n^{-\sfrac65}.
$$
\end{remark}

\subsection{Plan of the remaining sections} Except for Section \ref{s:final}, in which we prove the side Remark \ref{r:time_and_pressure},
the remaining sections are all devoted to the proof of Proposition \ref{p:iterate}. 

Section \ref{s:setup_for_prop}
contains the precise definition of the maps $(v_1, p_1, \mathring{R}_1)$ 
of Proposition \ref{p:iterate}. The maps will depend upon various parameters, which will
be specified only at the end.

Section \ref{s:est1} contains some preliminaries on classical estimates for the H\"older norms of products and compositions
of functions,
some classical Schauder estimates for the elliptic operators involved in the construction
and a "stationary phase lemma" (Proposition \ref{p:schauder}) for the H\"older norms of highly oscillatory functions. This last lemma is also a quite classical fact, but it plays a key role in our estimates.

In Section \ref{s:est2} we prove the key estimates on the main building blocks of the construction in terms
of the relevant parameters: all these estimates are collected in the technical
Proposition \ref{p:W}.

The various tools introduced in the Sections \ref{s:est1} and \ref{s:est2} are then used in Section \ref{s:est3},
\ref{s:est4} and \ref{s:est5} to derive the fundamental estimates on the H\"older norms of $v_1$
and $\mathring{R}_1$ in terms of the relevant parameters. In particular:
\begin{itemize}
\item Section \ref{s:est3} contains the estimates on $v_1$;
\item Section \ref{s:est4} the estimate on the kinetic energy $\int |v_1|^2$; 
\item Section \ref{s:est5}
the estimates on the Reynolds stress $\mathring{R}_1$.
\end{itemize}

Finally, in Section \ref{s:iterate_proof} the estimates of the Sections \ref{s:est3}, \ref{s:est4} and \ref{s:est5} are used to tune the parameters and prove Proposition \ref{p:iterate}.

\section{Definition of the maps $v_1, p_1$ and $\mathring{R}_1$}\label{s:setup_for_prop}

From now on we fix a triple $(v, p, \mathring{R})$ and numbers $\delta, \bar{\delta}, \eps>0$ as in Proposition \ref{p:iterate}. As in \cite{DS3} the new velocity $v_1$ is obtained by 
adding two perturbations, $w_o$ and $w_c$:
\begin{equation}\label{e:def_v_1}
v_1=v+w_o+w_c = v_1+w\, ,
\end{equation}
where $w_c$ is a {\em corrector} to ensure that $v_1$ is divergence-free. Thus, 
$w_c$ is defined as
\begin{equation}\label{e:Leray_projection}
w_c:= - \Q w_o\,
\end{equation}
where $\Q = {\rm Id}-\P$ and $\P$ is the Leray projection operator, see \cite[Definition 4.1]{DS3}.

\subsection{Conditions on the parameters} The main perturbation $w_o$ is a highly oscillatory function which depends on three parameters: a (small) length scale $\ell>0$ and (large) frequencies $\mu,\lambda$ such that
\[
\lambda,\,\mu,\, \frac{\lambda}{\mu}\in\N.
\] 
In the subsequent sections we will assume the following inequalities:
\begin{equation}\label{e:range}
\mu\geq \delta^{-1}\geq 1,\quad
 \ell^{-1}\geq \frac{D}{\eta\delta}\geq 1,\quad
\lambda\geq \max \left\{(\mu D)^{1+\omega}, \ell^{-(1+\omega)}\right\}\, .
\end{equation}
Here $\omega:=\frac{\eps}{2+\eps}>0$ so that
\[
1+\eps = \frac{1+\omega}{1-\omega}\,.
\]
Of course, at the very end, the proof of Proposition \ref{p:iterate} will use a specific choice of the
parameters, which will be shown to respect the above conditions. However, at this stage
the choices in \eqref{e:range} seem rather arbitrary. We could leave the parameters completely free and
carry all the relevant estimates in general, but this would give much more complicated and lengthy formulas
in all of them.  
It turns out that the conditions \eqref{e:range} above greatly simplifies many
computations.
  
\subsection{Definition of $w_o$}
In order to define $w_o$ we draw heavily upon the techniques introduced in \cite{DS3}.
\begin{itemize}
\item First of all we let $r_0>0$, $N, \lambda_0\in \mathbb N$, $\Lambda_j\subset \{k\in \mathbb Z^3 : |k|=\lambda_0\}$
and $\gamma_k^{(j)} \in C^\infty (B_{r_0} ({\rm Id}))$ be as in \cite[Lemma 3.2]{DS3}.
\item Next we let $\mathcal{C}_j\subset \mathbb Z^3$, $j\in \{1, \ldots, 8\}$ and the functions $\alpha_k$ be as in \cite[Section 4.1]{DS3};
as in that section, we define the functions
\[
\phi^{(j)}_{k,\mu} (v, \tau) := \sum_{l\in \mathcal{C}_j} \alpha_l (\mu v) e^{-i \frac{k\cdot l}{\mu} \tau}\, .
\]
\end{itemize}
Next, we let $\chi\in C^\infty_c (\R^3\times \R)$ be a smooth standard nonnegative radial kernel supported in $[-1,1]^4$ and 
we denote by
\[
\chi_\ell (x,t) := \frac{1}{\ell^4} \chi \left(\frac{x}{\ell}, \frac{t}{\ell}\right)
\] 
the corresponding family of mollifiers. We define
\begin{align*}
v_{\ell} (x,t) &=\int_{\T^3\times \mathbb S^1}v(x-y,t-s)\chi_{\ell}(y,s)\,dy\, ds\\
\mathring{R}_{\ell} (x,t) &=\int_{\T^3\times \mathbb S^1}\mathring{R}(x-y,t-s)\chi_{\ell}(y,s)\,dy\, ds.
\end{align*}

Similarly to \cite[Section 4.1]{DS3}, we define the function 
\begin{equation}\label{e:def_rho}
\rho_{\ell} (t) := \frac{1}{3(2\pi)^3} \left(e(t) (1-\bar\delta) - \int_{\T^3} |v_\ell|^2 (x,t)\, dx \right)\, 
\end{equation}
and the symmetric $3\times 3$ matrix field
\begin{equation}\label{e:def_R}
R_{\ell} (x,t) = \rho_{\ell} (t) {\rm Id} - \mathring{R}_\ell (x,t)\, .
\end{equation}
Finally, $w_o$ is defined by
\begin{equation}\label{e:def_w_o}
w_o (x,t) := \sqrt{\rho_{\ell}(t)}\sum_{j=1}^8\sum_{k\in\Lambda_j}\gamma_k^{(j)}\left(\frac{R_{\ell}(x,t)}{\rho_{\ell}(t)}\right)\phi_{k,\mu}^{(j)}\left(v_\ell (x,t),\lambda t\right)B_ke^{i\lambda k\cdot x}\, ,
\end{equation}
where $B_k\in \mathbb C^3$ are vectors of unit length satisfying the assumptions of \cite[Proposition 3.1]{DS3}. Recall that the maps $\gamma^{(j)}_k$ are defined only in $B_{r_0}({\rm Id})$. The function $w_o$ is nonetheless well defined: the fact that the arguments of $\gamma^{(j)}_k$ are contained in $B_{r_0}({\rm Id})$ will be ensured by the choice of $\eta$ in Section \ref{s:constants} below.

\subsection{The constants $\eta$ and $M$}\label{s:constants}

We start by observing that, by standard estimates on convolutions
\begin{align}
\|v_\ell\|_r+\|\mathring{R}_\ell\|_r&\leq C (r) D\ell^{-r}\quad\textrm{for any $r\geq 1$, }\label{e:conv1_stand}\\
\|v_\ell-v\|_0+\|\mathring{R}_\ell-\mathring{R}\|_0&\leq CD\ell\,\, ,\label{e:conv0_stand}
\end{align}
where the first constant depends only on $r$ and the second is universal.
By writing $\bigl||v_\ell|^2-|v|^2\bigr|\leq |v-v_\ell|^2+2|v||v-v_\ell|$ we deduce
\begin{align}
\int_{\T^3}\bigl||v_\ell|^2-|v|^2\bigr|\,dx&\leq C(D\ell)^2+Ce(t)^{1/2}D\ell\label{e:L2norm_est}\\
&\leq C\eta\delta\left(\max_{t}e(t)^{1/2}+1\right),\label{e:etadelta}
\end{align}
where the last inequality follows from \eqref{e:range}. This leads to the following lower bound on $\rho_\ell$:
\begin{align}
\rho_\ell(t)&\geq \frac{1}{3(2\pi)^3}\left(e(t)\left(1-\frac{\delta}{2}\right)-\int_{\T^3}|v|^2\,dx-\int_{\T^3}\bigl||v_\ell|^2-|v|^2\bigr|\,dx\right)\nonumber\\
&\stackrel{\eqref{e:etadelta}}{\geq}  \frac{1}{3(2\pi)^3}\left(\frac{\delta}{4}\min_{t}e(t)-C\eta\delta\left(\max_{t}e(t)^{1/2}+1\right)\right)\label{e:rho_below}
\end{align}
We then choose $0<\eta<1$ so that the quantity on the right hand side is greater than $\frac{2\eta\delta}{r_0}$. This is clearly possible with a choice of $\eta$ only depending on $e$. In turn, this leads to 
\begin{equation}
\left\|\frac{R_\ell}{\rho_\ell} - {\rm Id}\right\|_0 \leq \frac{\|\mathring{R}_\ell\|_0}{\min_t\rho_\ell(t)}\leq \frac{r_0}{2}.\label{e:r_0/2}
\end{equation}
Therefore $w_o$ in \eqref{e:def_w_o} is well-defined.

\medskip

In an analogous way we estimate $\rho_\ell$ from above as
\begin{align}
\rho_\ell(t)&\leq \frac{1}{3(2\pi)^3}\left(e(t)-\int_{\T^3}|v|^2\,dx+\int_{\T^3}\bigl||v_\ell|^2-|v|^2\bigr|\,dx\right)\nonumber\\
&\leq  \frac{1}{3(2\pi)^3}\left(\frac{5\delta}{4}\max_{t}e(t)+C\delta\left(\max_{t}e(t)^{1/2}+1\right)\right)\nonumber\\
&\leq C\delta\left(1+\max_te(t)\right)\, . \label{e:rho_above}
\end{align}
Since $|w_o|$ can be estimated as
\[
|w_o(x,t)|\leq C\sqrt{\rho_\ell(t)}\,,
\]
we can choose the constant $M$, depending only on $e$, in such a way that
\begin{equation}\label{e:w_o_C0}
\|w_o\|_0\leq \frac{M}{2}\sqrt{\delta}\,.
\end{equation}
This is essentially the major point in the definition of $M$: the remaining terms leading
to \eqref{e:C^0_est} and \eqref{e:C0_pressure} will be shown to be negligible thanks to an appropriate
choice of the parameters $\lambda, \mu$ and $\ell$. We will therefore require that, in addition to
\eqref{e:w_o_C0}, $M\geq 1$

\subsection{The pressure $p_1$} The pressure $p_1$ differs slightly from the corresponding one chosen
in \cite{DS3}. It is given by
\begin{equation}\label{e:def_p_1}
p_1=p- \frac{|w_o|^2}{2} - \frac{2}{3} \langle v-v_\ell, w\rangle\,.
\end{equation}
Observe that, by \eqref{e:w_o_C0}, we have
\begin{equation}\label{e:pressure_M}
\|p_1-p\|_0 \leq \frac{M^2}{4} \delta + \|v-v_\ell\|_0 \|w\|_0\, .
\end{equation}

\subsection{The Reynolds stress $\mathring{R}_1$} The Reynolds stress $\mathring{R}_1$ is defined by a slightly more
complicated formula than the corresponding one in \cite[Section 4.5]{DS3}. Recalling the operator $\RR$ from 
\cite[Definition 4.2]{DS3} we define $\mathring{R}_1$ as 
\begin{equation}\label{e:def_R_1}
\begin{split}
\mathring{R}_1 &= \RR [ \partial_t w + {\rm div} (w\otimes v_\ell + v_\ell \otimes w)]\\
&+ \RR [{\rm div} (w\otimes w + \mathring{R}_\ell - \textstyle{\frac{|w_o|^2}{2}}{\rm Id})]\\
&+ [w\otimes (v - v_\ell) + (v-v_\ell)\otimes w
 - \textstyle{\frac{2 \langle (v-v_{\ell}), w\rangle}{3}} {\rm Id}]\\
&+[\mathring{R}_{\ell} - \mathring{R}]\, .
\end{split}
\end{equation}
The summands in the third and fourth line
are obviously trace-free and symmetric. The summands in the first and second line are symmetric and
trace-free because of the properties of the operator $\mathcal{R}$ (cp. with \cite[Lemma 4.3]{DS3}).
Moreover, the expressions to which the operator $\mathcal{R}$ is applied have average $0$.
For the second line this is obvious because the expression is the divergence of a matrix field. 
As for the first line, since $w= \P w_o$, its average is zero by the definition of the operator $\P$.
Therefore the average of $\partial_t w$ is also zero. 
The remaining term is a divergence and hence its average equals $0$.

We now check that the triple $(v_1, p_1, \mathring{R}_1)$ satisfies the Euler-Reynolds system. 
First of all, recall that $\nabla g = {\rm div} (g {\rm Id})$ for any smooth function
$g$ and that ${\rm div} \RR F = F$ for any smooth $F$ with average $0$. Since we already observed that
the expressions to which $\mathcal{R}$ is applied average to $0$, we can compute
\[
{\rm div}\, \mathring{R}_1 - \nabla p_1 = \partial_t w + {\rm div} (w\otimes w) + {\rm div} (w\otimes v + v\otimes w)
- \nabla p + {\rm div}\, \mathring{R}\, .
\]
But recalling that $\div \mathring{R} = \partial_t v + {\rm div}\, (v\otimes v) + \nabla p$ we also get
\[
{\rm div}\, \mathring{R}_1 - \nabla p_1 = \partial_t (v+w) + {\rm div}\, [w\otimes w + v\otimes v + w\otimes v + v\otimes w]\, .
\]
Since $v_1=v+w$ we then conclude the desired identity.

In order to complete the
proof of Proposition \ref{p:iterate} we need to show that the (minor) estimates \eqref{e:C^0_est}, \eqref{e:C0_pressure} and 
the (major) estimates \eqref{e:energy_est}, \eqref{e:Reynolds_est}, \eqref{e:C1_est} hold: 
essentially all the rest of the paper is devoted to prove them.

\subsection{Constants in the estimates} The rest of the paper is devoted to estimating several H\"older norms of the various
functions defined so far. The constants appearing in the estimates will always be denoted by the letter $C$, which might be
followed by an appropriate subscript. First of all, 
by this notation we will throughout understand that the value may change from line to line.
In order to keep track of the quantities on which these constants depend, we will use subscripts to
make the following distinctions. 

\begin{itemize}
\item $C$: without a subscript will denote universal constants;
\item $C_h$: will denote constants in estimates concerning standard functional inequalities in H\"older spaces $C^r$ (such as \eqref{e:Holderinterpolation}, \eqref{e:Holderproduct}). These constants depend only on the specific norm used and therefore only
on the parameter $r\geq 0$: however we keep track of this dependence because the number $r$ will be chosen only at the end
of the proof of Proposition \ref{p:iterate} and its value may be very large;
\item $C_e$: throughout the rest paper the prescribed energy density $e=e(t)$ of Theorem \ref{t:main} and Proposition
\ref{p:iterate} will be assumed to be a fixed smooth function bounded below and above by positive constants; several estimates depend on these bounds and the related constants will be denoted by $C_e$;
\item $C_v$: in addition to the dependence on $e$, there will be estimates which depend also on the supremum norm 
of the velocity field $\|v\|_0$ (this explains
the origin of the constant $A$ in \eqref{e:C1_est}); 
\item $C_s$, $C_{e,s}$, $C_{v,s}$: will denote constants which are typically involved in Schauder estimates for $C^{m+\alpha}$ norms of elliptic operators,
when $m\in\mathbb N$ and $\alpha\in ]0,1[$; these constants not only depend on the specific norm used, but they also degenerate as $\alpha\downarrow 0$ and $\alpha\uparrow 1$; the ones denoted by $C_{e,s}$ and $C_{v,s}$
depend also, respectively, upon $e$ and upon $e$ and $\|v\|_0$.
\end{itemize}

Observe in any case that, no matter which subscript is used, such constants {\em never} depend on the parameters $\mu, \ell, \delta, \lambda$ and $D$; they are, however, allowed to depend on $\omega$ and $\eps$.

\section{Preliminary H\"older estimates}\label{s:est1}

In this section we collect several estimates which will be used throughout the rest
of the paper. 

We start with the following elementary inequalities:
\begin{equation}\label{e:Holderinterpolation}
[f]_{s}\leq C_h\bigl(\eps^{r-s}[f]_{r}+\eps^{-s}\|f\|_0\bigr)
\end{equation}
for $r\geq s\geq 0$ and $\eps>0$, and 
\begin{equation}\label{e:Holderproduct}
[fg]_{r}\leq C_h\bigl([f]_r\|g\|_0+\|f\|_0[g]_r\bigr)
\end{equation}
for any $1\geq r\geq 0$, where the constants depend only on $r$ and $s$. 
From \eqref{e:Holderinterpolation} with $\eps=\|f\|_0^{\sfrac1r}[f]_r^{-\sfrac1r}$ we obtain the 
standard interpolation inequalities
\begin{equation}\label{e:Holderinterpolation2}
[f]_{s}\leq C_h\|f\|_0^{1-\sfrac{s}{r}}[f]_{r}^{\sfrac{s}{r}}.
\end{equation}
Next we collect two classical estimates on the H\"older norms of compositions. These are also standard, for instance
in applications of the Nash-Moser iteration technique. For the convenience of the reader we recall the short proof.

\begin{proposition}\label{p:chain}
Let $\Psi: \Omega \to \mathbb R$ and $u: \R^n \to \Omega$ be two smooth functions, with $\Omega\subset \R^N$. 
Then, for every $m\in \mathbb N \setminus \{0\}$ there is a constant $C_h$ (depending only on $m$,
$N$ and $n$) such that
\begin{align}
\left[\Psi\circ u\right]_m &\leq C_h \sum_{i=1}^m [\Psi]_i \|u\|_0^{i-1} [u]_m\label{e:chain0}\\
\left[\Psi\circ u\right]_m &\leq C_h \sum_{i=1}^m [\Psi]_i [u]_1^{(i-1)\frac{m}{m-1}} [u]_m^{\frac{m-i}{m-1}}\, .
\label{e:chain1}
\end{align} 
\end{proposition}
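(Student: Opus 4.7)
The plan is to derive both inequalities from Fa\`a di Bruno's formula for $D^m(\Psi\circ u)$ combined with the interpolation estimate \eqref{e:Holderinterpolation2}, choosing different endpoints of interpolation in the two cases. First I would record the (tensor) Fa\`a di Bruno expansion
$$D^m(\Psi\circ u)(x) = \sum_{i=1}^m \sum_{\substack{k_1+\cdots+k_m = i \\ k_1+2k_2+\cdots+mk_m = m}} c_{k_1,\ldots,k_m}\,\bigl((D^i\Psi)(u(x))\bigr)\,\prod_{l=1}^m (D^l u(x))^{\otimes k_l}\, ,$$
with combinatorial coefficients $c_{k_1,\ldots,k_m}$ depending only on $m$, $n$, $N$. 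Taking sup norms and absorbing the finitely many coefficients into $C_h$ yields
$$[\Psi\circ u]_m \leq C_h \sum_{i=1}^m [\Psi]_i \sum_{(k_1,\ldots,k_m)} \prod_{l=1}^m [u]_l^{k_l}\, ,$$
where the inner sum runs over admissible multi-indices, i.e. those with $\sum k_l = i$ and $\sum l\,k_l = m$. The case $m=1$ is trivial from the chain rule, so I may assume $m\geq 2$.

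For \eqref{e:chain1} I would apply \eqref{e:Holderinterpolation2} to each intermediate seminorm, interpolating between $[u]_1$ and $[u]_m$:
$$[u]_l \leq C_h\, [u]_1^{(m-l)/(m-1)} [u]_m^{(l-1)/(m-1)} \qquad (1\leq l\leq m).$$
Plugging this into the product and using $\sum k_l = i$ and $\sum l k_l = m$, which give the linear identities $\sum k_l (m-l) = m(i-1)$ and $\sum k_l (l-1) = m-i$, the product collapses to $[u]_1^{(i-1) m/(m-1)} [u]_m^{(m-i)/(m-1)}$. Since the number of admissible tuples depends only on $m$, summing produces \eqref{e:chain1}.

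For \eqref{e:chain0} I would use the alternative interpolation between $\|u\|_0$ and $[u]_m$, namely
$$[u]_l \leq C_h\, \|u\|_0^{1-l/m} [u]_m^{l/m} \qquad (0\leq l\leq m).$$
The analogous bookkeeping now gives $\sum k_l(1-l/m) = i-1$ and $\sum k_l\,l/m = 1$, so each product in the Fa\`a di Bruno expansion collapses to $\|u\|_0^{i-1} [u]_m$, producing \eqref{e:chain0} after summing.

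I do not expect any substantive analytic obstacle, since both Fa\`a di Bruno and the interpolation inequality \eqref{e:Holderinterpolation2} are classical; the entire proof is the elementary exponent arithmetic above. The only point to be mildly careful about is the bookkeeping of the two identities $\sum k_l = i$ and $\sum l\,k_l = m$, which I would sanity-check on a small case such as $m=2$, where both \eqref{e:chain0} and \eqref{e:chain1} reduce to the familiar bound $[\Psi\circ u]_2 \leq C_h ([\Psi]_1 [u]_2 + [\Psi]_2 [u]_1^2)$ arising directly from the chain rule.
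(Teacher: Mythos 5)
Your proposal is correct and follows essentially the same route as the paper: Fa\`a di Bruno (the paper writes the same expansion with indices $\sigma$ in place of your $k$), followed by the two interpolation bounds $[u]_l \leq C_h \|u\|_0^{1-l/m}[u]_m^{l/m}$ and $[u]_l \leq C_h [u]_1^{(m-l)/(m-1)}[u]_m^{(l-1)/(m-1)}$, and the same exponent bookkeeping using $\sum k_l = i$, $\sum l\,k_l = m$. The paper's proof is identical in structure, just written more tersely.
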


\begin{proof}
Denoting by $D^j$ any partial derivative of order $j$, the chain rule can be written symbolically as
\begin{equation}\label{e:chainrule}
D^m(\Psi\circ u)=\sum_{l=1}^m(D^l\Psi)\circ u\sum_{\sigma}C_{l,\sigma}(Du)^{\sigma_1}(D^2u)^{\sigma_2}\dots(D^mu)^{\sigma_m}
\end{equation}
for some constants $C_{l,\sigma}$,
where the inner sum is over $\sigma=(\sigma_1,\dots,\sigma_m)\in\N^m$ such that
\begin{equation*}
\sum_{j=1}^m\sigma_j=l,\quad \sum_{j=1}^mj\sigma_j=m.
\end{equation*}
From \eqref{e:Holderinterpolation2} we have
\begin{enumerate}
\item[(a)]  $[u]_j\leq C_h\|u\|_0^{1-\frac{j}{m}}[u]_m^{\frac{j}{m}}$ for $j\geq 0$;
\item[(b)] $[u]_j\leq C_h[u]_1^{1-\frac{j-1}{m-1}}[u]_m^{\frac{j-1}{m-1}}$ for $j\geq 1$.
\end{enumerate}
Then \eqref{e:chain0} and \eqref{e:chain1} follow from applying (a) and (b) to \eqref{e:chainrule}, respectively.
\end{proof}

\subsection{Estimates on $\phi^{(j)}_{k,\mu}$} 
Recall that $\phi^{(j)}_{k,\mu}=\phi^{(j)}_{k,\mu}(v,\tau)$ are defined on
$\R^3\times \mathbb S^1$ and they are smooth (here $v$ is treated as an independent variable). Because the $\tau$-derivatives are not bounded in $v$,
we introduce the seminorms 
$$
[\cdot]_{r,R}=[\cdot]_{C^r(B_R(0)\times\mathbb S^1)}
$$ 
to denote the H\"older seminorms of the restriction of the corresponding function on the set $B_R (0)\times \mathbb S^1$.

\begin{proposition}\label{p:phi}
There are constants $C_h$ depending only on $m\in \mathbb N$ and 
such that the following estimates hold:
\begin{align}
\left[\phi^{(j)}_{k,\mu}\right]_{m, R} + R^{-1} \left[\partial_\tau \phi^{(j)}_{k,\mu}\right]_{m, R}
+R^{-2} \left[\partial_{\tau\tau} \phi^{(j)}_{k,\mu}\right]_{m,R} &\leq C_h \mu^m \label{e:phi}\\
\left[\partial_\tau \phi^{(j)}_{k,\mu} + i(k\cdot v)\phi_{k,\mu}^{(j)}\right]_m &\leq C_h \mu^{m-1} \label{e:transportphi}\\
R^{-1} \left[\partial_\tau \left(\partial_\tau \phi^{(j)}_{k,\mu} + i(k\cdot v)\phi_{k,\mu}^{(j)}\right)\right]_{m,R} &\leq C_h \mu^{m-1}\label{e:transportphi_t}
\end{align}
\end{proposition}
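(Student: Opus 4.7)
The plan is to derive all three estimates from a single explicit manipulation of the series defining $\phi^{(j)}_{k,\mu}$, exploiting the structural facts about the partition of unity $\{\alpha_l\}_{l\in\mathcal{C}_j}$ from \cite[Section 4.1]{DS3}: each $\alpha_l$ is a translate of a fixed bump supported in a ball of bounded radius about $l$, so at any fixed $v$ only $O(1)$ of the terms $\alpha_l(\mu v)$ are non-zero, and on $B_R(0)$ only indices with $|l|\leq C(1+\mu R)$ contribute. Since $|k|=\lambda_0$ is a fixed universal constant, the combinatorial factors in all the sums below will be harmless.

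For \eqref{e:phi}, I apply the chain rule directly to the generic summand $\alpha_l(\mu v)\,e^{-i(k\cdot l)\tau/\mu}$. Each spatial derivative in $v$ brings a factor $\mu$ (with the shifted bump $\alpha_l(\mu\,\cdot)$ being uniformly bounded in every $C^m$-norm up to the $\mu^m$), while each $\tau$-derivative brings a factor $(k\cdot l)/\mu$. For non-vanishing indices on $B_R(0)$ one has $|k\cdot l|/\mu \leq CR$, so differentiating $m$ times in $v$ and $p$ times in $\tau$ produces a term of size $R^p\mu^m$. Summing the $O(1)$ contributing $l$'s gives $\mu^m$, $R\mu^m$, $R^2\mu^m$ for $p=0,1,2$ respectively, which is \eqref{e:phi}.

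For \eqref{e:transportphi} the key point is an algebraic cancellation. Writing $i(k\cdot v)=i(k\cdot l)/\mu+(i/\mu)\,k\cdot(\mu v-l)$ and noting that the first piece exactly cancels $\partial_\tau e^{-i(k\cdot l)\tau/\mu}$, one obtains
\[
\partial_\tau \phi^{(j)}_{k,\mu}+i(k\cdot v)\phi^{(j)}_{k,\mu}
=\frac{i}{\mu}\sum_{l\in \mathcal{C}_j}\beta_l(\mu v)\,e^{-i(k\cdot l)\tau/\mu},
\qquad \beta_l(w):=\alpha_l(w)\,k\cdot(w-l).
\]
The crucial observation is that $\beta_l$ is \emph{still} a translate of a fixed compactly-supported smooth bump: since $|w-l|\leq C$ on $\supp\alpha_l$ and $|k|=\lambda_0$, we have $\|\beta_l\|_{C^m}\leq C_h$ uniformly in $l$, with no $R$-dependence. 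Hence the right-hand side is $\mu^{-1}$ times a sum of bounded bumps in $\mu v$, so the same differentiation count as in step (1) yields $\mu^{m-1}$ uniformly on $\R^3\times \mathbb S^1$.

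For \eqref{e:transportphi_t} I apply $\partial_\tau$ to the displayed identity, which brings out a factor $-i(k\cdot l)/\mu$ from each exponential and produces
\[
\partial_\tau\!\bigl(\partial_\tau \phi^{(j)}_{k,\mu}+i(k\cdot v)\phi^{(j)}_{k,\mu}\bigr)
=\frac{1}{\mu^2}\sum_{l}(k\cdot l)\,\beta_l(\mu v)\,e^{-i(k\cdot l)\tau/\mu}.
\]
For contributing indices on $B_R(0)$ one has $|k\cdot l|\leq C\mu R$, so the prefactor is of order $R/\mu$; multiplying by the $\mu^m$ from spatial differentiation of $\beta_l(\mu v)$ gives the bound $R\mu^{m-1}$, i.e.\ \eqref{e:transportphi_t}. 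The only real obstacle is the book-keeping: when $m$ derivatives in $v$ hit the sum, one must check that the combinatorial structure (partition of unity with bounded overlaps, support of $\partial^\beta\alpha_l$ unchanged under scaling) is preserved after differentiation and that the $C^m$-bound on the scalar bumps $\beta_l$ is truly universal; both are immediate from the identities $\partial^\beta[\alpha_l(\mu v)]=\mu^{|\beta|}(\partial^\beta\alpha_l)(\mu v)$ and $\beta_l(w)=\alpha_0(w-l)\,k\cdot(w-l)$, so no delicate analysis is needed beyond careful counting of the $\mu$- and $R$-factors.
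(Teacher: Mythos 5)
Your proof is correct and follows essentially the same approach as the paper's. The only cosmetic difference is in \eqref{e:transportphi}, where you package the factor $ik\cdot(v-\tfrac{l}{\mu})$ into the rescaled bump $\beta_l(w)=\alpha_l(w)\,k\cdot(w-l)$ before differentiating, whereas the paper applies $D^m_v$ to the product first and then observes $|v-\tfrac{l}{\mu}|\leq\mu^{-1}$ on the support of $\alpha_l(\mu\,\cdot)$; the two routes yield identical estimates.
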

\begin{proof} 
We recall briefly the definition of the maps $\phi^{(j)}_{k,\mu}$ from
\cite[Section 4.1]{DS3}. 
First of all we fix two constants $c_1$ and $c_2$ such that $\frac{\sqrt{3}}{2} < c_1 < c_2 < 1$
and then $\varphi\in C^\infty_c (B_{c _2} (0))$ which is nonnegative and identically
$1$ on the ball $B_{c_1} (0)$. 
We then set
\[
\psi (v) :=\sum_{k\in \Z^3} (\varphi (v-k))^2 \qquad \mbox{and}
\qquad 
\alpha_k(v) := \frac{\varphi (v -k)}{\sqrt{\psi(v)}}\, .
\]
By the choice of $c_1$ we easily conclude that $\psi^{-\frac{1}{2}}\in C^\infty$.
On the other hand it is also obvious that $\psi (v-k) = \psi (v)$. Thus there is a function
$\alpha\in C^\infty_c (B_1 (0))$ such that $\alpha_k (v) = \alpha (v-k)$.

We next consider the lattice $\Z^3\subset \R^3$ and its quotient by $(2\Z)^3$ and we denote 
by $\mathcal{C}_j$ , $j=1, \ldots, 8$ the 8 equivalence classes of $\Z^3/\sim$. Finally,
in \cite[Section 4.1]{DS3} we set
\begin{equation}\label{e:def_phi}
\phi^{(j)}_k(v,\tau) := \sum_{l\in\mathcal{C}_j}\alpha_l(\mu v)e^{-i(k\cdot \frac{l}{\mu})\tau}.
\end{equation}
Observe that the functions $\{\alpha_l : l\in \mathcal{C}_j\}$
have pairwise disjoint supports. Therefore the estimate
\[
\left[\phi^{(j)}_{k,\mu}\right]_m \leq C [\alpha]_m \mu^m \leq C_h \mu^m\, 
\]
follows trivially. Next, 
\[
\partial_\tau \phi^{(j)}_k(v,\tau) := \sum_{l\in\mathcal{C}_j} - i \left(k\cdot \frac{l}{\mu}\right) \alpha_l(\mu v)e^{-i(k\cdot \frac{l}{\mu})\tau}\, .
\]
On the other hand, if $|v|\leq R$, then $\alpha_l (\mu v) =0$ for any $l$ with $|l|\geq \mu R +2$: hence
\[
\left[\partial_\tau \phi^{(j)}_k\right]_{m, R} \leq |k| \left(R + 2\mu^{-1}\right)
 [\varphi]_m \mu^m \leq C_h R\, \mu^m\, 
\]
(in principle the constant $C_h$ depends on $k$, but on the other hand $k$ ranges in $\cup_j \Lambda_j$, which
is a finite set).
A similar argument applies to $\partial_{\tau\tau} \phi^{(j)}_{k, \mu}$ and hence concludes
the proof of \eqref{e:phi}.

\medskip

We finally compute
\begin{align*}
D^m_v \left(\partial_\tau \phi^{(j)}_{k,\mu} + i(k\cdot v)\phi_{k,\mu}^{(j)}\right)
&= \sum_{l\in\mathcal{C}_j} i k\cdot \left(v-\frac{l}{\mu}\right) \mu^m [D^m\alpha] (\mu (v-l))e^{-i(k\cdot \frac{l}{\mu})\tau}\\
&\quad + \mu^{m-1} \sum_{l\in\mathcal{C}_j} i k\otimes [D^{m-1}\alpha] (\mu (v-l))e^{-i(k\cdot \frac{l}{\mu})\tau}\, .
\end{align*}
Recall however that $\alpha\in C^\infty_c (B_1 (0))$: thus $|v-\frac{l}{\mu}|\leq \mu^{-1}$ if
$[D^m \alpha] (\mu (v-l))\neq 0$. It follows easily that
\[
\left[\partial_\tau \phi^{(j)}_{k,\mu} + i(k\cdot v)\phi_{k,\mu}^{(j)}\right]_m
\leq C \mu^{m-1} \left([\alpha]_m + |k| [\alpha]_{m-1}\right) \leq C_h \mu^{m-1}\, ,
\]
which proves \eqref{e:transportphi}. On the other hand, differentiating once more the identities
in $\tau$, \eqref{e:transportphi_t} follows from the same arguments used above for $[\partial_\tau \phi]_{m,R}$.
\end{proof}

\subsection{Schauder estimates for elliptic operators}
We now recall some classical Schauder estimates for the various operators involved in the
construction. These estimates were already collected in \cite[Proposition 5.1]{DS3} and will
be used several times in what follows. We state them again for the readers convenience and
because of the convention on constants as set in Section \ref{s:constants}, and refer to 
\cite[Definitions 4.1, 4.2]{DS3} for the precise definition of the operators $\mathcal{P}$, $\mathcal{Q}$ and
$\mathcal{R}$.

\begin{proposition}\label{p:GT}
For any $\alpha\in (0,1)$ and any $m\in \N$ there exists a constant $C_s (m, \alpha)$ so that the following estimates hold:
\begin{eqnarray}
&&\|\mathcal{Q} v\|_{m+\alpha} \leq C_s (m,\alpha) \|v\|_{m+\alpha}\label{e:Schauder_Q}\\
&&\|\mathcal{P} v\|_{m+\alpha} \leq C_s (m,\alpha) \|v\|_{m+\alpha}\label{e:Schauder_P}\\
&&\|\mathcal{R} v\|_{m+1+\alpha} \leq C_s (m,\alpha) \|v\|_{m+\alpha}\label{e:Schauder_R}\\
&&\|\mathcal{R} ({\rm div}\, A)\|_{m+\alpha}\leq C_s (m,\alpha) \|A\|_{m+\alpha}\label{e:Schauder_Rdiv}\\
&&\|\mathcal{R} \mathcal{Q} ({\rm div}\, A)\|_{m+\alpha}\leq C_s (m,\alpha) \|A\|_{m+\alpha}\label{e:Schauder_RQdiv}\, .
\end{eqnarray}
\end{proposition}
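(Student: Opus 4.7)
The plan is to express each of the operators $\mathcal{P}$, $\mathcal{Q}$ and $\mathcal{R}$ in terms of the inverse Laplacian $\Delta^{-1}$ (acting on mean-zero functions on $\T^3$) composed with derivatives, and then invoke classical Schauder estimates for the Poisson equation on $\T^3$. These Schauder estimates take the usual form
\[
\|\Delta^{-1} f\|_{m+2+\alpha} \leq C_s(m,\alpha)\, \|f\|_{m+\alpha}
\]
for mean-zero $f\in C^{m+\alpha}(\T^3)$, and their proofs are entirely standard (interior estimates on balls combined with the periodic structure), with the constant $C_s(m,\alpha)$ depending on $m,\alpha$ and blowing up as $\alpha\downarrow 0$ or $\alpha\uparrow 1$. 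No dependence on the parameters $\mu,\lambda,\ell,\delta,D$ enters, consistently with the convention set out in Section \ref{s:constants}.

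For \eqref{e:Schauder_P} and \eqref{e:Schauder_Q} I would use the Helmholtz decomposition $\mathcal{Q} v = \nabla \Delta^{-1}\,\mathrm{div}\, v$ and $\mathcal{P}=\mathrm{Id}-\mathcal{Q}$. The operator $\partial_i\partial_j \Delta^{-1}$ is a composition of two Riesz transforms, i.e.\ a 0-order Calder\'on--Zygmund operator; by classical Schauder theory it is bounded on $C^{m+\alpha}(\T^3)$ for every $m\in\N$ and $\alpha\in(0,1)$. This immediately gives both \eqref{e:Schauder_Q} and \eqref{e:Schauder_P}. Estimate \eqref{e:Schauder_R} is the gain-of-one-derivative statement: unpacking the definition of $\mathcal{R}$ from \cite[Def.~4.2]{DS3}, $\mathcal{R}$ is built from $\Delta^{-1}$ composed with at most one derivative, so the Schauder estimate for $\Delta^{-1}$ gives a gain of a single derivative, yielding \eqref{e:Schauder_R}.

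For \eqref{e:Schauder_Rdiv} the key observation is that $\mathcal{R}\circ\mathrm{div}$ should be a 0-order operator: formally $\mathcal{R}$ gains one derivative and $\mathrm{div}$ costs one. Concretely I would write, using the formula for $\mathcal{R}$ from \cite{DS3}, $(\mathcal{R}\,\mathrm{div}\, A)_{ij}$ as a sum of terms of the form $\partial_k\partial_l\Delta^{-1}A_{pq}$ (after integrating the divergence by parts against the kernel of $\Delta^{-1}$ used in the definition of $\mathcal{R}$), together with an additional $\Delta^{-2}$ double-derivative term arising from the trace-free correction. Each term is the action of a 0-order singular integral operator on the components of $A$, hence bounded on $C^{m+\alpha}$ by classical theory. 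Finally \eqref{e:Schauder_RQdiv} follows by composing: $\mathcal{Q}(\mathrm{div}\, A)=\nabla \Delta^{-1}\mathrm{div}\,\mathrm{div}\, A$ is the divergence of a matrix field whose components are $\partial_j\Delta^{-1}\mathrm{div}\, A$, so one can write $\mathcal{R}\mathcal{Q}(\mathrm{div}\, A)=\mathcal{R}(\mathrm{div}\, B)$ with $B_{ij}=\delta_{ij}\partial_k\Delta^{-1}\mathrm{div}\, A_k$, and \eqref{e:Schauder_Rdiv} together with \eqref{e:Schauder_Q} gives the bound.

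I expect the main technical nuisance, rather than obstacle, to be the bookkeeping in \eqref{e:Schauder_Rdiv}: one has to integrate the divergence inside $\mathcal{R}$ through its explicit definition and verify that the resulting combinations of $\partial^2\Delta^{-1}$ operators preserve the symmetric-traceless structure. Once this is done the estimates reduce to the classical Calder\'on--Zygmund/Schauder boundedness on $\T^3$, whose constants depend only on $m$ and $\alpha$.
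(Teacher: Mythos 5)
Your approach is correct and is the standard route; note, however, that the paper itself does not prove Proposition \ref{p:GT}: it explicitly states that these estimates ``were already collected in \cite[Proposition 5.1]{DS3}'' and refers to \cite[Definitions 4.1, 4.2]{DS3} for the definitions of $\mathcal{P}$, $\mathcal{Q}$, $\mathcal{R}$. So there is no internal proof to compare against --- you are in effect reconstructing the argument from the cited companion paper.

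On the substance the sketch is sound. Writing $\mathcal{Q}v=\nabla\Delta^{-1}{\rm div}\,v$ and $\mathcal{P}={\rm Id}-\mathcal{Q}$ reduces \eqref{e:Schauder_P}, \eqref{e:Schauder_Q} to H\"older boundedness of $\partial_i\partial_j\Delta^{-1}$, a composition of Riesz transforms, which is classical. From the explicit formula for $\mathcal{R}$ in \cite[Definition~4.2]{DS3} --- a fixed linear combination of $\nabla u$, $\nabla\mathcal{P}u$, their transposes, and $({\rm div}\,u)\,{\rm Id}$ with $u=\Delta^{-1}v$ --- one sees $\mathcal{R}$ is exactly a gain-of-one-derivative operator, giving \eqref{e:Schauder_R}; substituting $v={\rm div}\,A$ turns every term into a second-order derivative of $\Delta^{-1}$ applied to the entries of $A$, possibly composed with $\mathcal{P}$, hence a $0$-order Calder\'on--Zygmund operator, which gives \eqref{e:Schauder_Rdiv}. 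Your reduction of \eqref{e:Schauder_RQdiv} to \eqref{e:Schauder_Rdiv}, via $\mathcal{Q}({\rm div}\,A)=\nabla g={\rm div}(g\,{\rm Id})$ with $g=\Delta^{-1}{\rm div}\,{\rm div}\,A$ and the $0$-order bound on $\partial_k\partial_l\Delta^{-1}$ to control $\|g\,{\rm Id}\|_{m+\alpha}$ by $\|A\|_{m+\alpha}$, also works; alternatively one can simply note that $\mathcal{Q}\circ{\rm div}={\rm div}\circ\tilde{\mathcal{Q}}$ for a suitable $0$-order $\tilde{\mathcal{Q}}$ acting on matrix fields. One small remark: there is no need to re-verify the symmetric trace-free structure of $\mathcal{R}\,{\rm div}\,A$ --- that is built into the definition of $\mathcal{R}$ and is orthogonal to the Schauder bound. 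The one genuine subtlety you already flag correctly: the constant $C_s(m,\alpha)$ degenerates as $\alpha\downarrow 0$ or $\alpha\uparrow 1$, which is precisely why the paper keeps the $C_s$ notation separate (cf.\ the discussion in Section \ref{s:constants}) and why the later choice \eqref{e:choice_of_alpha} fixes $\alpha$ strictly in the interior of $(0,1)$.
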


\subsection{Stationary phase lemma}
Finally, we state a key ingredient of our construction, which yields estimates for
highly oscillatory functions. Though this proposition is also essentially contained in \cite{DS3}, it is nowhere
explicitly stated in this form. Since it will be used several more times and in a more subtle way than in \cite{DS3},
it is useful to isolate it from the rest. 

\begin{proposition}\label{p:schauder}
Let $k\in\Z^3\setminus\{0\}$ and $\lambda\geq 1$. 

(i) For any $a\in C^{\infty}(\T^3)$ and $m\in\N$ we have
\begin{equation}\label{e:average}
\left|\int_{\T^3}a(x)e^{i\lambda k\cdot x}\,dx\right|\leq \frac{[a]_m}{\lambda^m}.
\end{equation}

(ii) Let $k\in \mathbb Z^3\setminus \{0\}$. For a smooth vector field $F\in C^{\infty}(\T^3;\R^3)$ let 
$F_{\lambda}(x):=F(x)e^{i\lambda k\cdot x}$. Then we have
\begin{align*}
\|\RR(F_\lambda)\|_{\alpha}&\leq \frac{C_s}{\lambda^{1-\alpha}}\|F\|_0+\frac{C_s}{\lambda^{m-\alpha}}[F]_m+\frac{C_s}{\lambda^m}[F]_{m+\alpha},\\
\|\RR \mathcal{Q} (F_\lambda)\|_{\alpha}&\leq \frac{C_s}{\lambda^{1-\alpha}}\|F\|_0+\frac{C_s}{\lambda^{m-\alpha}}[F]_m+\frac{C_s}{\lambda^m}[F]_{m+\alpha},
\end{align*}
where $C_s=C_s(m,\alpha)$ (i.e. the constant does not depend on $\lambda$ nor on $k$).
\end{proposition}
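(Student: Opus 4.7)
Both parts rest on the same stationary-phase identity
\[
e^{i\lambda k\cdot x} \;=\; \frac{1}{i\lambda|k|^{2}}\,(k\cdot\nabla)\, e^{i\lambda k\cdot x}\qquad (k\neq 0),
\]
and the plan in each case is to transfer the derivatives $(k\cdot\nabla)$ from the oscillating factor onto $a$ (resp.\ $F$), reading off powers of $\lambda^{-1}$ and using $|k|\geq 1$ to absorb all $k$-dependent constants into the universal ones.

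Part (i) is immediate: integrating by parts $m$ times on $\T^{3}$, using periodicity to kill boundary terms, yields
\[
\int_{\T^{3}} a(x)\, e^{i\lambda k\cdot x}\,dx \;=\; \frac{(-1)^{m}}{(i\lambda|k|^{2})^{m}}\int_{\T^{3}} (k\cdot\nabla)^{m} a(x)\, e^{i\lambda k\cdot x}\,dx,
\]
and the bound $|(k\cdot\nabla)^{m} a|\leq C_{h}|k|^{m}[a]_{m}$ together with $|k|^{-m}\leq 1$ gives the desired estimate. For part (ii) I would apply the same identity componentwise, rewritten in product form as
\[
F e^{i\lambda k\cdot x} \;=\; \frac{1}{i\lambda|k|^{2}}\Bigl(\div\bigl(k\otimes F\, e^{i\lambda k\cdot x}\bigr) \;-\; (k\cdot\nabla F)\, e^{i\lambda k\cdot x}\Bigr),
\]
which expresses $F_{\lambda}$ as the divergence of a matrix (a shape suitable for \eqref{e:Schauder_Rdiv}) plus a new oscillatory vector field with one more derivative on $F$. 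Iterating $m$ times yields
\[
\RR F_{\lambda} \;=\; \sum_{j=1}^{m}\frac{(-1)^{j-1}}{(i\lambda|k|^{2})^{j}}\,\RR\,\div\bigl(k\otimes (k\cdot\nabla)^{j-1}F\, e^{i\lambda k\cdot x}\bigr) \;+\; \frac{(-1)^{m}}{(i\lambda|k|^{2})^{m}}\,\RR\bigl((k\cdot\nabla)^{m}F\, e^{i\lambda k\cdot x}\bigr),
\]
with the sum controlled via \eqref{e:Schauder_Rdiv} and the remainder via \eqref{e:Schauder_R} (which supplies the gain of one derivative). The elementary key is the product bound
\[
\bigl\|Ge^{i\lambda k\cdot x}\bigr\|_{\alpha}\leq C\bigl(\|G\|_{\alpha}+|k|^{\alpha}\lambda^{\alpha}\|G\|_{0}\bigr),
\]
obtained by distinguishing the cases $|x-y|\leq\lambda^{-1}$ and $|x-y|>\lambda^{-1}$ when estimating the H\"older seminorm.

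The step I expect to be the most fiddly is the bookkeeping: after inserting the product bound, the iteration produces contributions of the form $\|F\|_{j-1+\alpha}/\lambda^{j}$ and $\|F\|_{j-1}/\lambda^{j-\alpha}$ for $j=1,\dots,m$, together with $\|F\|_{m+\alpha}/\lambda^{m}$ and $\|F\|_{m}/\lambda^{m-\alpha}$ from the remainder, and these must be collapsed into the three buckets $\|F\|_{0}/\lambda^{1-\alpha}$, $[F]_{m}/\lambda^{m-\alpha}$, $[F]_{m+\alpha}/\lambda^{m}$ claimed in the proposition. For this I would apply the interpolation inequality \eqref{e:Holderinterpolation2} to write $[F]_{s}\leq C_{h}\|F\|_{0}^{1-s/m}[F]_{m}^{s/m}$ for $0\leq s\leq m$ and the analogous inequality at level $m+\alpha$, then apply Young's inequality to split each mixed interpolation product into a convex combination of the three targets; a short arithmetic check shows that the residual power of $\lambda$ is always nonpositive, so $\lambda\geq 1$ closes the estimate. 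The $\RR\mathcal{Q}$ variant is identical, using \eqref{e:Schauder_RQdiv} in place of \eqref{e:Schauder_Rdiv} on the divergence terms and the composition bound $\|\RR\mathcal{Q} G\|_{\alpha}\leq \|\RR\mathcal{Q} G\|_{1+\alpha}\leq C_{s}\|G\|_{\alpha}$ (combining \eqref{e:Schauder_R} with \eqref{e:Schauder_Q}) on the final remainder.
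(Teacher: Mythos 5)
Your proof is correct and follows essentially the same route as the paper's: the same iterated stationary-phase identity writes $F_\lambda$ as a sum of divergences of oscillatory tensors plus a remainder, and the same Schauder estimates \eqref{e:Schauder_Rdiv}, \eqref{e:Schauder_R} (and their $\mathcal Q$-variants) transfer the bound to Hölder norms of the coefficients. The only difference is bookkeeping: the paper applies the additive interpolation inequality \eqref{e:Holderinterpolation} with $\eps=\lambda^{-1}$ directly to each intermediate seminorm $[a]_j$, $[a]_{j+\alpha}$, producing the three target terms in one step, whereas you pass through the multiplicative form \eqref{e:Holderinterpolation2} and then Young's inequality --- slightly longer but leading to the same conclusion.
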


\begin{proof}
For $j=0,1,\dots$ define
\begin{align*}
A_j(y,\xi)&:=-i\left[\frac{k}{|k|^2}\left(i\frac{k}{|k|^2}\cdot\nabla\right)^ja(y)\right]e^{ik\cdot \xi}\, ,\\
B_j(y,\xi)&:=\left[\left(i\frac{k}{|k|^2}\cdot\nabla\right)^ja(y)\right]e^{ik\cdot \xi}\, .
\end{align*}
Direct calculation shows that
\[
B_j(x,\lambda x)=\frac{1}{\lambda}\div\bigl[A_j(x,\lambda x)\bigr]+\frac{1}{\lambda}B_{j+1}(x,\lambda x).
\]
In particular, for any $m\in\N$
\begin{equation*}
a(x)e^{i\lambda k\cdot x}=B_0(x,\lambda x)=\frac{1}{\lambda}\sum_{j=0}^{m-1} \frac{1}{\lambda^j}\div\bigl[A_j(x,\lambda x)\bigr]+\frac{1}{\lambda^{m}}B_m(x,\lambda x)
\end{equation*}
Integrating this over $\T^3$ and using that $|k|\geq 1$ we obtain \eqref{e:average}.

Next, using \eqref{e:Holderinterpolation} and \eqref{e:Holderproduct} we conclude
\begin{equation*}
\begin{split}
\|A_j(\cdot,\lambda\cdot)\|_{\alpha}&\leq C\left(\lambda^{\alpha}[a]_j+[a]_{j+\alpha}\right)\\
&\leq C\lambda^{j+\alpha}\left(\lambda^{-m}[a]_m+\|a\|_0\right) \qquad \mbox{for any $j\leq m-1$}
\end{split}
\end{equation*}
and similarly
\begin{equation*}
\|B_m(\cdot,\lambda\cdot)\|_{\alpha}\leq C\left(\lambda^{\alpha}[a]_m+[a]_{m+\alpha}\right)\, .
\end{equation*}
Applying the previous computations to each component of the vector field  $F$ we then get the identity
\begin{equation*}
F(x)e^{i\lambda k\cdot x}=G_0(x,\lambda x)=\frac{1}{\lambda}\sum_{j=0}^{m-1} \frac{1}{\lambda^j}\div\bigl[H_j(x,\lambda x)\bigr]+\frac{1}{\lambda^{m}} G_m(x,\lambda x)
\end{equation*}
where the $H_j$ are matrix-valued functions (not necessarily symmetric) and $G_m$ is a vector
field. $H_j$ and $G_m$ enjoy the same estimates of $A_j$ and $B_m$ respectively. Thus, using \eqref{e:Schauder_R},
\eqref{e:Schauder_Rdiv} and \eqref{e:average} we conclude
\begin{align*}
\|\RR (F_\lambda)\|_{\alpha}&\leq C_s \left(\frac{1}{\lambda}\sum_{j=0}^{m-1} \frac{1}{\lambda^j}\|H_j(\cdot,\lambda \cdot)\|_{\alpha}+\frac{1}{\lambda^{m}}\|G_m(\cdot,\lambda \cdot)\|_{\alpha}\right)\\
&\leq C_s\left( \frac{1}{\lambda^{1-\alpha}}\|F\|_0+\frac{1}{\lambda^{m-\alpha}}[F]_m+\frac{1}{\lambda^m}[F]_{m+\alpha}\right)\,.
\end{align*}
Finally, using \eqref{e:Schauder_Q}, \eqref{e:Schauder_R} and \eqref{e:Schauder_RQdiv} we get
\[
\|\RR \mathcal{Q} (F_\lambda)\|_{\alpha}\leq C_s\left(\frac{1}{\lambda^{1-\alpha}}\|F\|_0+\frac{1}{\lambda^{m-\alpha}}[F]_m+\frac{1}{\lambda^m}[F]_{m+\alpha}\right)
\]
as well.
\end{proof}

\section{Doubling the variables and corresponding estimates}\label{s:est2}

It will be convenient to write $w_o$ as
\[
w_o(x,t)=W(x,t,\lambda t,\lambda x),
\]
where
\begin{align}
W(y,s,\tau,\xi)&:=\sum_{|k|=\lambda_0}a_k(y,s,\tau)B_ke^{ik\cdot \xi}\label{e:bigW}\\
&=\sqrt{\rho_{\ell}(s)}\sum_{j=1}^8\sum_{k\in\Lambda_j}\gamma_k^{(j)}\left(\frac{R_\ell(y,s)}{\rho_\ell(s)}\right)\phi_{k,\mu}^{(j)}\left(v_{\ell} (y,s),\tau\right)B_ke^{ik\cdot \xi}
\end{align}
(cp. with \cite[Section 6]{DS3}). The following Proposition corresponds to \cite[Proposition 6.1]{DS3}, with an important difference:
the estimates stated here keep not only track of the dependence of the constants on the parameter $\mu$, but also on the parameter
$\ell$ and the functions $v$ and $\mathring{R}$ (as it can be easily observed, these estimates do not depend on $p$): more precisely
we will make explicit their dependence on $\delta$ and $D$ (for the constants recall the convention
stated in Section \ref{s:constants}). Observe that
all the estimates claimed below are in {\em space only}!

\begin{proposition}\label{p:W}
(i) Let $a_k\in C^{\infty}(\T^3\times\mathbb S^1\times\R)$ be given by \eqref{e:bigW}. Then for any $r\geq 1$  
and any $\alpha\in [0,1]$ we have the following estimates:
\begin{eqnarray}
\|a_k(\cdot,s,\tau)\|_r &\leq & C_e \sqrt{\delta} \left(\mu^rD^r+\mu D\ell^{1-r}\right)\label{e:a,r}\\
\|\partial_\tau a_k(\cdot,s,\tau)\|_r + \|\partial_{\tau\tau} a_k (\cdot, s, \tau)\|_r&\leq & C_v \sqrt{\delta} \left(\mu^rD^r+\mu D\ell^{1-r}\right)\label{e:a_t,r}\\
\|(\partial_\tau a_k+i(k\cdot v_{\ell})a_k)(\cdot,s,\tau)\|_r&\leq & C_e \sqrt{\delta} \left(\mu^{r-1}D^r+ D\ell^{1-r}\right)\label{e:a_trans,r}\\
\|\partial_\tau(\partial_\tau a_k+i(k\cdot v_{\ell})a_k)(\cdot,s,\tau)\|_r&\leq & C_v \sqrt{\delta} \left(\mu^{r-1}D^r+ D\ell^{1-r}\right)\label{e:a_trans_t,r}
\end{eqnarray}
\begin{eqnarray}
\|a_k(\cdot,s,\tau)\|_\alpha  &\leq & C_e \sqrt{\delta} \mu^\alpha D^\alpha\label{e:a,al} \\
\|\partial_\tau a_k(\cdot,s,\tau)\|_\alpha + \|\partial_{\tau\tau} a_k (\cdot, s, \tau)\|_\alpha&\leq & C_v \sqrt{\delta} \mu^\alpha D^\alpha\label{e:a_t,al}\\
\|(\partial_\tau a_k+i(k\cdot v_{\ell})a_k)(\cdot,s,\tau)\|_\alpha &\leq & C_e \sqrt{\delta} \mu^{\alpha-1}D^\alpha
\label{e:a_trans,al}\\
\|\partial_\tau(\partial_\tau a_k+i(k\cdot v_{\ell})a_k)(\cdot,s,\tau)\|_\alpha &\leq & C_v \sqrt{\delta} \mu^{\alpha-1}D^\alpha\label{e:a_trans_t,al}
\end{eqnarray}
The following estimates hold for any $r\geq 0$:
\begin{eqnarray}
\|\partial_s a_k(\cdot,s,\tau)\|_{r}  &\leq& C_e \sqrt{\delta} \left(\mu^{r+1}D^{r+1}+\mu D\ell^{-r}\right)\label{e:a_s}\\
\|\partial_{s\tau} a_k (\cdot, s, \tau)\|_r &\leq& C_v \sqrt{\delta} \left(\mu^{r+1}D^{r+1}+\mu D\ell^{-r}\right)\label{e:a_st}\\
\|\partial_{ss} a_k(\cdot,s,\tau)\|_{r}&\leq& C_e \sqrt{\delta} \left(\mu^{r+2}D^{r+2} + \mu D \ell^{-1-r}\right)
\label{e:a_ss}\\
\|\partial_s(\partial_\tau a_k+i(k\cdot v_{\ell})a_k)(\cdot,s,\tau)\|_r&\leq & C_v \sqrt{\delta} \left(\mu^{r}D^{r+1}+ \mu D\ell^{-r}\right)\label{e:a_trans_s}
\end{eqnarray}

(ii) The matrix-function $W\otimes W$ can be written as
\begin{equation}\label{e:WoW}
(W\otimes W)(y,s,\tau,\xi)=R_\ell(y,s)+\sum_{1\leq |k| \leq 2\lambda_0}U_k(y,s,\tau)e^{ik\cdot \xi},
\end{equation}
where the coefficients $U_k\in C^{\infty}(\T^3\times\mathbb S^1\times\R;\S^{3\times 3})$ satisfy
\begin{equation}\label{e:Uk}
U_kk=\frac{1}{2}(\tr U_k)k\, .
\end{equation}
Moreover, we have the following estimates for any $r\geq 1$ and any $\alpha\in [0,1]$:
\begin{eqnarray}
\|U_k(\cdot,s,\tau)\|_r &\leq & C_e \delta\left(\mu^rD^r+\mu D\ell^{1-r}\right)\label{e:U,r}\\
\|\partial_\tau U_k(\cdot,s,\tau)\|_r &\leq& C_v \delta\left(\mu^rD^r+\mu D\ell^{1-r}\right)
\label{e:U_t,r}\\
\|U_k(\cdot,s,\tau)\|_\alpha &\leq & C_e \delta \mu^\alpha D^\alpha\label{e:U,al}\\
\|\partial_\tau U_k(\cdot,s,\tau)\|_\alpha&\leq & C_v \delta \mu^\alpha D^\alpha\label{e:U_t,al}
\end{eqnarray}
and the following estimate for any $r\geq 0$:
\begin{equation}
\|\partial_s U_k(\cdot,s,\tau)\|_{r}\leq C_e \delta \left(\mu^{r+1}D^{r+1}+\mu D\ell^{-r}\right)\, .\label{e:U_s}
\end{equation}
\end{proposition}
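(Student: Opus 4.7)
The proof is a long but essentially mechanical bookkeeping exercise. My plan for (i) is to decompose
\[
a_k(y,s,\tau) = \sqrt{\rho_\ell(s)}\cdot \gamma_k^{(j)}\!\Bigl(\tfrac{R_\ell(y,s)}{\rho_\ell(s)}\Bigr)\cdot \phi_{k,\mu}^{(j)}(v_\ell(y,s),\tau)
\]
into a ``slow'' factor $h(y,s):=\sqrt{\rho_\ell(s)}\,\gamma_k^{(j)}(R_\ell/\rho_\ell)$ and a ``fast'' factor $\phi_{k,\mu}^{(j)}(v_\ell,\tau)$, and to derive every estimate in (i) by combining the Leibniz rule \eqref{e:Holderproduct}, the chain rule (Proposition \ref{p:chain}), the seminorm control of $\phi_{k,\mu}^{(j)}$ in its own variables (Proposition \ref{p:phi}), and the standard mollification estimates \eqref{e:conv1_stand}.

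First I would handle the slow factor. Since $\sqrt{\rho_\ell}$ depends only on $s$, its positive spatial seminorms vanish and $\|\sqrt{\rho_\ell}\|_0\leq C_e\sqrt{\delta}$ by \eqref{e:rho_above}. Applying the chain rule to $\gamma_k^{(j)}\circ(R_\ell/\rho_\ell)$ with the lower bound $\rho_\ell\geq c\delta$ from Section \ref{s:constants} and $[R_\ell]_r\leq C(r)D\ell^{1-r}$, and then invoking the range conditions \eqref{e:range} (which force $D\ell/\delta\leq\eta$ and $\mu\delta\geq 1$), yields $[h]_r\lesssim \sqrt{\delta}\,\mu D\ell^{1-r}$ for $r\geq 1$. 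Next, for the fast factor I would apply the interpolated chain rule \eqref{e:chain1} to $v\mapsto\phi_{k,\mu}^{(j)}(v,\tau)$ (whose $v$-seminorms are $\lesssim\mu^r$ by \eqref{e:phi}) composed with $y\mapsto v_\ell(y,s)$ (for which $[v_\ell]_1\leq D$ and $[v_\ell]_r\leq C(r)D\ell^{1-r}$). The two endpoints $i=r$ and $i=1$ of the Faà di Bruno sum produce exactly the scales $\mu^r D^r$ and $\mu D\ell^{1-r}$, while each intermediate term factors as $(\mu D)^i\ell^{i-r}$ and is dominated by their sum via log-convexity. A Leibniz combination then yields \eqref{e:a,r}, and \eqref{e:a,al} follows by interpolation between $r=0$ and $r=1$.

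The $\tau$-derivative bounds \eqref{e:a_t,r}, \eqref{e:a_t,al} are obtained in the same way, but now the seminorms of $\partial_\tau\phi_{k,\mu}^{(j)}$ in \eqref{e:phi} carry the extra factor $R\sim\|v_\ell\|_0\leq\|v\|_0$: this is precisely why the constant upgrades from $C_e$ to $C_v$. The transport estimates \eqref{e:a_trans,r}--\eqref{e:a_trans_t,al} reuse the identical scheme with \eqref{e:transportphi}--\eqref{e:transportphi_t} in place of \eqref{e:phi}; the cancellation built into $\partial_\tau\phi+i(k\cdot v)\phi$ gains one power of $\mu^{-1}$, sending $\mu^r$ to $\mu^{r-1}$ everywhere. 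Finally, the $s$-derivative estimates \eqref{e:a_s}--\eqref{e:a_trans_s} follow the same pattern after the standard observation that each time-mollification derivative trades one power of $\ell^{-1}$, so $\|\partial_s v_\ell\|_r\leq C(r)D\ell^{-r}$ and $\|\partial_{ss}v_\ell\|_r\leq C(r)D\ell^{-1-r}$; the factor $\partial_s\sqrt{\rho_\ell}$ contributes a bounded, $y$-independent multiple of $\sqrt{\delta}$.

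For part (ii), I would expand
\[
W\otimes W = \sum_{|k|=|k'|=\lambda_0}a_k\overline{a_{k'}}\,B_k\otimes\overline{B_{k'}}\,e^{i(k-k')\cdot\xi}
\]
using the realness relations $a_{-k}=\overline{a_k}$ and $B_{-k}=\overline{B_k}$. The resonant part $k=k'$ is $\xi$-independent and, by the algebraic identity hard-wired in \cite[Lemma 3.2]{DS3} into the $\gamma_k^{(j)}$ together with the partition-of-unity property $\sum_{l\in\mathcal{C}_j}|\alpha_l(\mu v)|^2\equiv 1$ of $\phi_{k,\mu}^{(j)}$, collapses exactly to $R_\ell(y,s)$. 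For each $0<|k|\leq 2\lambda_0$ the remaining sum groups finitely many pairs $(k_1,k_2)$ with $k_1-k_2=k$ into the coefficient $U_k$; the Beltrami-type identity \eqref{e:Uk} follows from the structural properties of the vectors $B_k$ chosen in \cite[Proposition 3.1]{DS3}. The bounds \eqref{e:U,r}--\eqref{e:U_s} then reduce via Leibniz to the corresponding bounds on $a_k$ from part (i), with the factor $\sqrt{\delta}$ upgrading to $\delta$ by bilinearity. The hardest part of the proof will be the Faà di Bruno bookkeeping: across all estimates one must check uniformly that the intermediate chain-rule terms are dominated by the two endpoint scales $\mu^r D^r$ and $\mu D\ell^{1-r}$, for which the range condition \eqref{e:range} is used essentially through $D\ell/\delta\leq\eta$ and $\mu\delta\geq 1$.
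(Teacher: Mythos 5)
Your proposal follows essentially the same route as the paper: write $a_k = \sqrt{\rho_\ell}\cdot\gamma_k^{(j)}(R_\ell/\rho_\ell)\cdot\phi^{(j)}_{k,\mu}(v_\ell,\tau)$, estimate each factor with the chain rule (Proposition~\ref{p:chain}) and the $\phi$-bounds (Proposition~\ref{p:phi}), combine with Leibniz, note that the intermediate Fa\`a di Bruno terms $(\mu D)^i\ell^{i-r}$ are dominated by the endpoints by convexity, absorb the $\delta^{-1/2}$ from $\sqrt{\rho_\ell}$ and $\rho_\ell^{-1}$ using $\mu\geq\delta^{-1}$, and trace the upgrade $C_e\rightsquigarrow C_v$ to the factor $R\sim\|v\|_0$ in the $\tau$-derivative bounds of $\phi$. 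The only inaccuracy is in your sketch of part~(ii): the identity $\sum_{l\in\mathcal{C}_j}|\alpha_l(\mu v)|^2\equiv 1$ does not hold for a single $j$ --- the partition of unity runs over all of $\mathbb Z^3$, i.e.\ $\sum_{j=1}^8\sum_{l\in\mathcal{C}_j}\alpha_l^2\equiv 1$ --- while the Beltrami reconstruction identity $\sum_{k\in\Lambda_j}(\gamma_k^{(j)})^2 B_k\otimes\overline{B_k}=R$ holds for \emph{each} $j$ separately; the two facts combine as $\sum_j\bigl(\sum_{l\in\mathcal{C}_j}\alpha_l^2\bigr)R = R$ to give the resonant collapse to $R_\ell$. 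Since the paper itself only cites \cite[Proposition~6.1]{DS3} for part~(ii) rather than proving it, this is a small mis-attribution of where the two algebraic facts live, not a structural gap.
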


\begin{proof}
The arguments for \eqref{e:WoW} and \eqref{e:Uk} are analogous to those in the proof of \cite[Proposition 6.1]{DS3}. Moreover,
precisely as argued there, the estimates for the $U_k$ terms follow easily from the estimates for the $a_k$ coefficients, since each $U_k$ is the sum of finitely many terms of the form $a_{k'}a_{k''}$.
Here we focus, therefore, on the estimates \eqref{e:a,r}-\eqref{e:a_trans_s}.

\medskip

First of all observe that it suffices to prove the cases $r\in \mathbb N$, since the remaining ones can be obtained by interpolation. Recall now the formula for $a_k$: if $k\in \bigcup_j \Lambda_j$, then
\begin{equation}\label{e:a_k}
a_k = \sqrt{\rho_\ell(s)}\gamma_{k}^{(j)}\left(\frac{R_\ell(y,s)}{\rho_\ell(s)}\right)\phi_{k,\mu}^{(j)}\left(v_{\ell}(y,s),\tau\right)\, ,
\end{equation}
otherwise $a_k$ vanishes identically.

Observe that the functions $a_k$ depend on the variables $y$, $s$ and $\tau$. We introduce
the notation $\llbracket \cdot \rrbracket_m$ for the H\"older seminorms in $y$ {\em and} $s$
\[
\llbracket a_k (\cdot, \cdot, \tau)\rrbracket_m = \sum_{j+|\beta|=m} \left\|\partial^j_s D^\beta_y a_k\right\|_0\, 
\]
and the notation
$\hys a_k (\cdot, \cdot, \tau)\hys_m$ for the H\"older norm in $y$ {\em and} $s$:
\[
\hys a_k (\cdot,\cdot, \tau)\hys_m = \sum_{i=0}^m \llbracket a_k (\cdot, \cdot, \tau)\rrbracket_i \, .
\]
We next introduce the functions
\[
\Gamma (y,s) = \gamma_k^{(j)}\left(\frac{R_\ell(y,s)}{\rho_\ell(s)}\right) \qquad
\mbox{and}\qquad \Phi (y,s, \tau) =  \phi_{k,\mu}^{(j)}\left(v_{\ell}(y,s),\tau\right)
\]
and observe that
\[
a_k = \sqrt{\rho_\ell}\, \Gamma\, \Phi\, .
\]
Recall that $\|\rho_l\|_0\leq C_e \delta$ by \eqref{e:rho_above}. Therefore
the claimed estimate for $r=\alpha = 0$ follows trivially. Thus, we assume $r\in \mathbb N\setminus \{0\}$
and we focus on the estimates \eqref{e:a,r}-\eqref{e:a_trans_t,r} and \eqref{e:a_s}-\eqref{e:a_trans_s}.

\medskip

{\bf Proof of the estimates \eqref{e:a,r}, \eqref{e:a_s} and \eqref{e:a_ss}.} Recalling \eqref{e:Holderproduct}, we estimate
\begin{align}
\hys a_k \hys_r&\leq C_h\|\sqrt{\rho_\ell}\|_0 \|\Gamma\|_0\llbracket\Phi\rrbracket_r+C_h \|\sqrt{\rho_\ell}\|_0 \|\Phi\|_0 \llbracket\Gamma\rrbracket_r + C_h \|\Phi\|_0\|\Gamma\|_0 \llbracket \sqrt{\rho_\ell}\rrbracket_r\, \nonumber\\
& \leq C_e \left( \sqrt{\delta} \left(\llbracket \Phi\rrbracket_r + \llbracket \Gamma \rrbracket_r\right) + \llbracket \sqrt{\rho_\ell}\rrbracket_r\right)\, .\label{e:triple_prod}
\end{align}
Next, by \eqref{e:conv1_stand}, for any $j\geq 1$ we have $[v_\ell]_j \leq C_h D \ell^{1-j}$ for
every $j\geq 1$.
Applying \eqref{e:chain1} in Proposition \ref{p:chain} and Proposition \ref{p:phi} we conclude
\begin{align}
\llbracket\Phi\rrbracket_r &\leq C_h \sum_{i=1}^r  \left[\phi^{(j)}_{k, \mu}\right]_i [v_\ell]_1^{(i-1) \frac{r}{r-1}} [v_\ell]_r^{\frac{r-i}{r-1}} \leq C_h \sum_{i=1}^r  \left[\phi^{(j)}_{k, \mu}\right]_i D^i \ell^{i-r}\nonumber\\ 
&\stackrel{\eqref{e:phi}}{\leq} C_h \sum_{i=1}^r C_h \mu^i D^i \ell^{-r+i}\leq C_h \left(\mu^r D^r + \mu D \ell^{1-r}\right)\, .\label{e:est_Phi}
\end{align}
Applying \eqref{e:chain0} of Proposition \ref{p:chain} we also conclude
\begin{align}
\llbracket\Gamma\rrbracket_r 
&\leq C_h \sum_{i=1}^r \left[\gamma_k^{(j)}\right]_i \left\|\frac{R_\ell}{\rho_\ell}\right\|_0^{i-1}
\left\llbracket\frac{R_\ell}{\rho_\ell}\right\rrbracket_r\label{e:Gamma_1}
\end{align}
Now, by \eqref{e:r_0/2} we have
\[
\left\|\frac{R_\ell}{\rho_\ell}\right\|_0 \leq \frac{r_0}{2}+1\, .
\]
Moreover $[\gamma^{(j)}_k]_r \leq C_h$:
indeed recall that, because of our choice of $\eta$ in Section \ref{s:constants}, the range of $\frac{R_\ell}{\rho_\ell}$ is contained
in $B_{\frac{r_0}{2}} ({\rm Id})$, whereas the $\gamma^{(j)}_k$ are defined on the open ball $B_{r_0} ({\rm Id})$;
since the $\gamma^{(j)}_k$ are smooth and finitely many, obviously we can bound their norms
uniformly on the range of the function $\frac{R_\ell}{\rho_\ell}$. 

Using these estimates in \eqref{e:Gamma_1}
we thus get
\begin{equation}\label{e:Gamma_2}
\llbracket \Gamma\rrbracket_r \leq C_h \left\llbracket\frac{R_\ell}{\rho_\ell}\right\rrbracket_r
\stackrel{\eqref{e:Holderproduct}}{\leq}
\|\rho_\ell^{-1}\|_0 \llbracket R_\ell \rrbracket_r
+ \|R_\ell\|_0 \llbracket\rho_\ell^{-1}\rrbracket_r\, .
\end{equation}
Recall next that, by \eqref{e:rho_below}, $\rho_\ell (s)\geq C_e \delta$ for every $s$.
Moreover, by \eqref{e:def_rho}, for $r\geq 1$ we have 
\[
\partial^r_s \rho_\ell (s) = \frac{1}{3(2\pi)^3}\left((1-\bar{\delta})\partial_s^re(s)-\sum_{j=0}^r \binom{r}{j} 
\int_{\T^3} \left(\partial_s^j v_\ell \cdot \partial_s^{r-j} v_\ell\right) (x,s)\, dx\right)\, .
\]
Thus, we conclude
\begin{align}
[\rho_\ell]_r &\leq C_e+C \|v_\ell\|_{C^0_t L^2_x} [v_\ell]_r + C_h \sum_{j=1}^{r-1} [v_\ell]_j [v_\ell]_{r-j}\nonumber\\
&\leq C_e+C_e [v_\ell]_r + C_h \sum_{j=1}^{r-1} [v_\ell]_j [v_\ell]_{r-j}\nonumber\\ 
&\stackrel{\eqref{e:conv1_stand}}{\leq} C_e D \ell^{1-r} + C_h D^2 \ell^{r-2}
\stackrel{\eqref{e:range}}{\leq} C_e D \ell^{1-r}\, . \label{e:rho_r}
\end{align}

Set $\Psi (\zeta) = \zeta^{-1}$. On the domain $[\delta, \infty[$, we have the estimate
$[\Psi]_i \leq C_h \delta^{-i-1}$. Therefore, applying again \eqref{e:chain0} we conclude
\begin{align}
\llbracket\rho_\ell^{-1}\rrbracket_r &\leq C_h \sum_{i=1}^r \delta^{-i-1} \|\rho_\ell\|_0^{i-1} [\rho_\ell]_r
\leq C_h \delta^{-2} [\rho_\ell]_r \leq C_e \delta^{-2} D \ell^{r-1}\, .\label{e:rho^-1_r}
\end{align}
It follows from \eqref{e:Gamma_2}, \eqref{e:rho^-1_r} and \eqref{e:conv1_stand}  that
\begin{equation}\label{e:est_Gamma}
\llbracket\Gamma\rrbracket_r \leq C_e \delta^{-1} D \ell^{r-1}\, .
\end{equation}

\medskip

Next, set $\Psi (\zeta)= \zeta^{\frac{1}{2}}$. In this case, on the domain $[\delta, C_e \delta[$
we have the estimates $[\Psi]_i \leq C_e \delta^{\frac{1}{2}-i}$. Thus, by \eqref{e:chain0}
and \eqref{e:rho_r}:
\begin{equation}\label{e:est_sqrt_rho}
\llbracket \sqrt{\rho_\ell}\rrbracket_r \leq C_h \sum_{i=1}^r C_e \delta^{\frac{1}{2}-i} \|\rho_\ell\|_0^{i-1}
[\rho_\ell]_r \leq C_e \delta^{-\frac{1}{2}} D \ell^{1-r}\, .
\end{equation}
Inserting \eqref{e:est_Phi}, \eqref{e:est_Gamma} and \eqref{e:est_sqrt_rho}
into \eqref{e:triple_prod} we conclude
\[
|||a_k|||_r \leq C_e \delta^{-\frac{1}{2}} D \ell^{1-r} +C_e \delta^{\frac{1}{2}} \mu^r D^r +
C_e \delta^{\frac{1}{2}} \mu D \ell^{1-r}\, .
\]
Recall, however, that $\mu\geq \delta^{-1}$ and hence
\[
|||a_k|||_r \leq C_e \sqrt{\delta} \left( \mu^r D^r + \mu D \ell^{1-r}\right)\, .
\]
From this we derive the claimed estimates for $\|a_k\|_r$ for any $r\geq 1$ and for $\|\partial_s a_k\|_r$ and
$\|\partial_{ss} a_k\|_r$ for any $r\geq 0$.

\medskip

{\bf Proof of the estimates \eqref{e:a_t,r} and \eqref{e:a_st}.} 
Differentiating in $\tau$ we obtain the identities 
\begin{align*}
\partial_\tau a_k (\cdot, \cdot, \tau) &= \sqrt{\rho_\ell}\; \Gamma\, \partial_\tau \phi^{(j)}_{k, \mu} (v_\ell, \tau)\, \\
\partial_{\tau\tau} a_k (\cdot, \cdot, \tau) &= \sqrt{\rho_\ell}\; \Gamma\, \partial_{\tau\tau} 
\phi^{(j)}_{k, \mu} (v_\ell, \tau)\, .
\end{align*}
Thus, arguing precisely as above, we achieve the desired estimates for the quantities $\|\partial_\tau a_k\|_r$, $\|\partial_{\tau s} a_k\|_r$ and $\|\partial_{\tau\tau} a_k\|_r$. However, note that we use
the estimate \eqref{e:phi} with $R:= \|v\|_0$ and for $[\partial_t \phi^{(j)}_{k,\mu}]_{m,R}$ and
$[\partial_{\tau\tau} \phi^{(j)}_{k,\mu}]_{m,R}$. It turns out, therefore, that the constants in the estimates 
\eqref{e:a_t,r} and \eqref{e:a_st} depend also on $\|v\|_0$.

\medskip

{\bf Proof of the estimates \eqref{e:a_trans,r}, \eqref{e:a_trans_t,r} and \eqref{e:a_trans_s}.}
Finally, we introduce the function
\[
{\chi}^{(j)}_{k, \mu} (v, \tau) := \partial_\tau \phi^{(j)}_{k, \mu} + i(k\cdot v) \phi^{(j)}_{k, \mu}\, 
\]
and
${\chi} (y,s, \tau) = \chi^{(j)}_{k, \mu} (v_\ell (y,s), \tau)$.
Then
\[
\partial_\tau a_k + i (k\cdot v_\ell) a_k = \sqrt{\rho_\ell} \chi \Gamma\, .
\]
Applying the same computations as above and using the estimates in Proposition \ref{p:phi} we
achieve the desired estimates for $\|\partial_\tau a_k + i (k\cdot v_\ell) a_k\|_r$ and
$\|\partial_s (\partial_\tau a_k + i (k\cdot v_\ell) a_k)\|_r$. Finally,
\[
\partial_\tau (\partial_\tau a_k + i (k\cdot v_\ell) a_k )
= \sqrt{\rho_\ell}\; \Gamma \left[ \partial_\tau \chi^{(j)}_{k, \mu}\right] (v_\ell, \tau)
\]
and hence the arguments above carry over to estimate also the quantity $\|\partial_\tau (\partial_\tau a_k + i (k\cdot v_\ell) a_k)\|_r$.
\end{proof}

\section{Estimates on $w_o$, $w_c$ and $v_1$}\label{s:est3}

\begin{proposition}\label{p:velocity_est}
Under assumption \eqref{e:range}, the following estimates hold for any $r\geq 0$
\begin{align}
\|w_o\|_r &\leq C_e \sqrt{\delta} \lambda^r\,, \label{e:est_w_o}\\
\|\partial_t w_o\|_r&\leq C_v \sqrt{\delta}\lambda^{r+1}\,  \label{e:est_w_o_t}
\end{align}
and the following for any $r>0$ {\em which is not integer}:
\begin{align}
\|w_c\|_r &\leq C_{e,s} \sqrt{\delta} D \mu\, \lambda^{r-1}\label{e:est_w_c}\\
\|\partial_t w_c\|_r &\leq C_{v,s}\sqrt{\delta} D \mu \lambda^r\label{e:est_w_c_t}\, . 
\end{align}
In particular 
\begin{align}
\|w\|_0&\leq C_e\sqrt{\delta}\,,\label{e:w_C0}\\
\|w\|_{C^1}&\leq C_{v}\sqrt{\delta}\lambda\,.\label{e:w_C1}
\end{align}
\end{proposition}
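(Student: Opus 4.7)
The plan is to treat $w_o$, its time derivative, and the corrector $w_c$ in turn, using that at fixed $t$ each piece is a finite sum of slowly varying amplitudes against the plane waves $e^{i\lambda k\cdot x}$. Since the coefficients in the expansion
\[
w_o(x,t) = \sum_{|k|=\lambda_0} a_k(x,t,\lambda t)\, B_k\, e^{i\lambda k\cdot x}
\]
depend smoothly on $(y,s,\tau)=(x,t,\lambda t)$, for integer $r$ I would apply Leibniz in $x$ and the product estimate \eqref{e:Holderproduct} to get
\[
\|w_o\|_r \leq C_h\sum_{k}\sum_{j=0}^r \binom{r}{j}\lambda^j \|a_k(\cdot,t,\lambda t)\|_{r-j}\, ,
\]
and then bound the factors by \eqref{e:a,r} (for $r-j\geq 1$) and by $\|a_k\|_0\lesssim C_e\sqrt{\delta}$ (for $r-j=0$). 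The hypotheses \eqref{e:range}, which give $\mu D\leq \lambda$ and $\ell^{-1}\leq \lambda$, reduce each summand to $C_e\sqrt{\delta}\,\lambda^r$, and interpolation \eqref{e:Holderinterpolation2} extends this to all $r\geq 0$. For $\partial_t w_o$ I would use
\[
\partial_t w_o = \sum_k\bigl[(\partial_s a_k) + \lambda\,(\partial_\tau a_k)\bigr](x,t,\lambda t)\,B_k\,e^{i\lambda k\cdot x}
\]
and repeat the Leibniz argument, now feeding in \eqref{e:a_s} for $\partial_s a_k$ and \eqref{e:a_t,r}, \eqref{e:a_t,al} for $\partial_\tau a_k$; both produce $\lambda^{r+1}$ after using \eqref{e:range}, and the $\|v\|_0$-dependence of \eqref{e:a_t,r} is what turns $C_e$ into $C_v$ in \eqref{e:est_w_o_t}.

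The main work is the corrector estimate. A direct Schauder bound $\|w_c\|_r\leq C_s\|w_o\|_r$ from \eqref{e:Schauder_Q} only gives $\sqrt{\delta}\,\lambda^r$ and misses the key factor $D\mu/\lambda$. The gain comes from the Beltrami cancellation $B_k\cdot k=0$, which forces
\[
\operatorname{div}(a_k B_k e^{i\lambda k\cdot x}) = (B_k\cdot\nabla a_k)\,e^{i\lambda k\cdot x}\, ,
\]
so that $\operatorname{div} w_o$ is itself oscillatory with amplitude controlled by $\nabla a_k$ of size $C_e\sqrt{\delta}\,\mu D$ (from \eqref{e:a,r} with $r=1$). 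Writing $w_c=-\nabla\Delta^{-1}\operatorname{div} w_o = -\sum_k\nabla\Delta^{-1}\bigl[(B_k\cdot\nabla a_k)\,e^{i\lambda k\cdot x}\bigr]$, I would apply the same integration-by-parts/stationary-phase scheme used to prove Proposition \ref{p:schauder}: iterating the identity
\[
\Delta\bigl(\phi\,e^{i\lambda k\cdot x}\bigr) = \bigl(\Delta\phi + 2i\lambda k\cdot\nabla\phi -\lambda^2|k|^2\phi\bigr)e^{i\lambda k\cdot x}
\]
produces, for any $N$, an expansion
\[
\nabla\Delta^{-1}\bigl[G\,e^{i\lambda k\cdot x}\bigr] = \sum_{n=1}^{N}\lambda^{-n}H_n(G)\,e^{i\lambda k\cdot x} + \lambda^{-N}\mathcal{R}_N\bigl[G_N\,e^{i\lambda k\cdot x}\bigr]
\]
whose remainder can be absorbed via \eqref{e:Schauder_Q}--\eqref{e:Schauder_R}. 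For $G=B_k\cdot\nabla a_k$ I would apply this with $N$ chosen depending on $r$, and bound each $H_n(G)$ in $C^{r-\text{integer}+\alpha}$ through Proposition \ref{p:W}. The conditions \eqref{e:range} again collapse all error terms into $C_{e,s}\sqrt{\delta}\,D\mu\,\lambda^{r-1}$, giving \eqref{e:est_w_c}. The bound \eqref{e:est_w_c_t} follows by the same scheme after replacing $a_k$ by $\partial_s a_k + \lambda\,\partial_\tau a_k$ and using \eqref{e:a_s}, \eqref{e:a_t,r}; the extra $\lambda$ from the $\partial_\tau$ contribution is precisely what raises the exponent by one.

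Finally, \eqref{e:w_C0} and \eqref{e:w_C1} follow directly: $\|w\|_0\leq \|w_o\|_0 + \|w_c\|_\alpha$ for some small $\alpha>0$ is bounded by $C_e\sqrt{\delta} + C_{e,s}\sqrt{\delta}\,D\mu\,\lambda^{\alpha-1}$, and the second term is negligible under \eqref{e:range}; similarly $\|w\|_{C^1}$ combines \eqref{e:est_w_o} at $r=1$, \eqref{e:est_w_o_t} at $r=0$, and \eqref{e:est_w_c}, \eqref{e:est_w_c_t} at a non-integer $r$ slightly above $1$ and $0$, with the factor $D\mu$ being absorbed into $\lambda$. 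The hard part is really only Step 2: the naive Schauder bound is insufficient, and only the combination of the Beltrami identity $B_k\cdot k=0$ with the stationary-phase cancellation in $\nabla\Delta^{-1}$ yields the factor $D\mu/\lambda$ that makes $w_c$ a genuine corrector rather than a competitor of $w_o$.
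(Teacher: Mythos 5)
Your handling of $w_o$ and $\partial_t w_o$ matches the paper exactly: the product rule \eqref{e:Holderproduct} against $\Omega_k(\lambda x)=B_ke^{i\lambda k\cdot x}$, fed with \eqref{e:a,r}, \eqref{e:a,al}, \eqref{e:a_t,r}, \eqref{e:a_s}, and then \eqref{e:range} to collapse $\mu D\ell^{1-m}$ and $\mu^m D^m$ below $\lambda^m$. The $C_v$ versus $C_e$ distinction is also correctly attributed to the $\|v\|_0$-dependence in the $\partial_\tau$-estimates.

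For the corrector you take a genuinely different route. You use the Beltrami cancellation in the form $\operatorname{div}(a_kB_ke^{i\lambda k\cdot x})=(B_k\cdot\nabla a_k)e^{i\lambda k\cdot x}$, then feed $w_c=-\nabla\Delta^{-1}\operatorname{div} w_o$ into an iterated elliptic stationary-phase expansion to harvest the $\lambda^{-1}$. This works: the leading symbol of $\nabla\Delta^{-1}$ at frequency $\lambda k$ is $O(\lambda^{-1})$, the amplitude $B_k\cdot\nabla a_k$ is $O(\sqrt\delta\,\mu D)$ by \eqref{e:a,r}, and the remainder after $N\gtrsim(1+\omega)/\omega$ iterations is controlled by \eqref{e:range}, exactly as in the proof of Proposition \ref{p:schauder}. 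The paper instead uses the dual manifestation of $B_k\cdot k=0$: since $B_k$ is orthogonal to $k$, $w_o$ can be written as $\tfrac1\lambda\nabla\times(\cdots)+\tfrac1\lambda u_c$ with $u_c$ carrying the amplitude $\nabla a_k$; the curl is divergence-free so $\mathcal Q$ annihilates it, giving $w_c=\tfrac1\lambda\,\mathcal Q u_c$. After that single algebraic step the explicit $\lambda^{-1}$ is already exposed, and a plain Schauder bound \eqref{e:Schauder_Q} on $\mathcal Q u_c$ suffices --- no new stationary-phase lemma for $\nabla\Delta^{-1}$ is needed, and one avoids tracking the iterated remainders. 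Your scheme also requires extending Proposition \ref{p:schauder}, which is stated for $\mathcal R$ and $\mathcal R\mathcal Q$, to $\nabla\Delta^{-1}$; the argument transfers, but it is additional overhead compared with the paper's one-step curl trick. Both approaches deliver the same $\sqrt\delta\,D\mu\,\lambda^{r-1}$ bound and both rest on the same Beltrami identity; the paper's is simply more economical. Your derivation of \eqref{e:w_C0} and \eqref{e:w_C1}, choosing $\alpha$ small enough so that $\lambda^{1-\alpha}\geq D\mu$ via \eqref{e:range}, matches the paper.
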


\begin{proof} First of all observe that it suffices to prove \eqref{e:est_w_o} when $r=m\in\N$,
since the remaining inequalities can be obtained by interpolation. By writing
\begin{align*}
w_o (x,t) &= \sum_{|k|=\lambda_0} a_k (x,t,\lambda t) B_k e^{i\lambda k\cdot x}
=: \sum_{|k|=\lambda_0} a_k (x, t, \lambda t) \Omega _k (\lambda x),\\
\partial_t w_o (x,t) &= \lambda \sum_{|k|= \lambda_0} \partial_\tau a_k (x,t, \lambda t) \Omega_k (\lambda x)
+ \sum_{|k|= \lambda_0} \partial_s a_k (x,t, \lambda t) \Omega_k (\lambda x),
\end{align*}
from \eqref{e:Holderproduct} we obtain
\begin{align*}
\|w_o\|_m &\leq C_h \sum_{|k|= \lambda _0} \left( \|\Omega_k\|_0 [a_k]_m + \lambda^m \|a_k\|_0 [\Omega_k]_m\right),\\
\|\partial_t w_o\|_m &\leq C_h \lambda \sum_{|k|= \lambda _0} \left( \|\Omega_k\|_0 [\partial _\tau a_k]_m + \lambda^m \|\partial_\tau a_k\|_0 [\Omega_k]_m\right)\\
&\quad + C_h \sum_{|k|= \lambda _0} \left( \|\Omega_k\|_0 [\partial_s a_k]_m + \lambda^m \|\partial_s a_k\|_0 [\Omega_k]_m\right).
\end{align*}
When $m=0$, we then use \eqref{e:a,al} to conclude \eqref{e:est_w_o} and \eqref{e:a_t,al}
and \eqref{e:a_s} to conclude \eqref{e:est_w_o_t}. For $m\geq 1$ we use, respectively, \eqref{e:a,r}
and the estimates \eqref{e:a_t,r} and \eqref{e:a_s} to get:
\begin{align*}
\|w_o\|_m&\leq C_e \sqrt{\delta} \left( \mu^m D^m + \mu D \ell^{1-m}
+ \lambda^m\right)\\
\|\partial_t w_o\|_m &\leq C_v  \sqrt{\delta} \Bigl( \lambda\mu^m D^m +\lambda \mu D \ell^{1-m}+ \lambda^{m+1}\\
&\qquad +\mu^{m+1}D^{m+1}+\mu D\ell^{-m}+\lambda^m\mu D\Bigr)
\end{align*}
However, recall from \eqref{e:range} that $\lambda \geq (D\mu)^{1+\omega} \geq D\mu$ and
$\lambda \geq \ell^{-1}$. Thus \eqref{e:est_w_o} and \eqref{e:est_w_o_t} follow easily.

As for the estimates on $w_c$ we argue as in \cite[Lemma 6.2]{DS3} and 
start with the observation that, since $k\cdot B_k=0$, 
\begin{equation*}
\begin{split}
w_o(x,t)=&\frac{1}{\lambda}\nabla\times\left(\sum_{|k|=\lambda_0}-ia_k(x,t,\lambda t)\frac{k\times B_k}{|k|^2}e^{i\lambda x\cdot k}\right)+\\
&+\frac{1}{\lambda}\sum_{|k|=\lambda_0}i\nabla a_k(x,t,\lambda t)\times \frac{k\times B_k}{|k|^2}e^{i\lambda x\cdot k}.
\end{split}
\end{equation*}
Hence
\begin{equation}\label{e:w_cu_c}
w_c(x,t)=\frac{1}{\lambda}\Q u_c(x,t),
\end{equation}
where 
\begin{equation}\label{e:wo_trick}
u_c(x,t)=\sum_{|k|=\lambda_0}i\nabla a_k(x,t,\lambda t)\times \frac{k\times B_k}{|k|^2}e^{i\lambda x\cdot k}.
\end{equation}
The Schauder estimate \eqref{e:Schauder_Q} gives then 
\begin{equation}\label{e:Schauder_again}
\|w_c\|_{m+\alpha} \leq \frac{C_s}{\lambda} \|u_c\|_{m+\alpha}
\end{equation}
for any $m\in\N$ and $\alpha\in(0,1)$.
We next wish to estimate $\|u_c\|_r$. For integer $m$ we can argue as for the estimate
of $\|w_o\|$ to get
\begin{align*}
\|u_c\|_m &\leq C_e \left([a_k]_1 \lambda^m + [a_k]_{m+1}\right) \leq C_e \sqrt{\delta} \left( \mu D \lambda^m
+ \mu D \ell^{-m}\right)\\ 
&\leq C_e \sqrt{\delta}\mu D \lambda^m\, .
\end{align*}
Hence, by interpolation, we reach the estimate
$\|u_c\|_{m+\alpha} \leq C_e \sqrt{\delta}\mu D \lambda^{m+\alpha}$ for any $m,\alpha$. Combining this with
\eqref{e:Schauder_again}, for $r>0$ which is not an integer we conclude 
$\|w_c\|_{r} \leq C_{e,s} \sqrt{\delta} \mu D \lambda^{r-1}$. 

Similarly, for $\partial_t w_c$ we have
\[
\partial_t w_c = \frac{1}{\lambda} \mathcal{Q} \partial_t u_c\, .
\]
Differentiating \eqref{e:wo_trick} we achieve
\begin{align*}
\partial_t u_c(x,t)& = \lambda \sum_{|k|=\lambda_0}i\nabla \partial_\tau a_k(x,t,\lambda t)\times \frac{k\times B_k}{|k|^2}e^{i\lambda x\cdot k}\\
&\quad + \sum_{|k|=\lambda_0}i\nabla \partial_s a_k(x,t,\lambda t)\times \frac{k\times B_k}{|k|^2}e^{i\lambda x\cdot k}\, .
\end{align*}
Using Proposition \ref{p:W} and \eqref{e:range} we deduce, analogously to above, $\|\partial_t u_c\|_r \leq C_v \sqrt{\delta} \mu D \lambda^{r+1}$. Using \eqref{e:Schauder_again} once more we arrive at \eqref{e:est_w_c}.

\medskip

To obtain \eqref{e:w_C0} and \eqref{e:w_C1}, recall that $w=w_o+w_c$. 
For any $\alpha>0$ we therefore have
\begin{equation}\label{e:al_to_choose}
\|w\|_0\leq \|w_o\|_0+\|w_c\|_\alpha\leq C_e\sqrt{\delta}+C_{e,s}\sqrt{\delta}D\mu\lambda^{\alpha-1}.
\end{equation}
We now use \eqref{e:al_to_choose} with $\alpha =\frac{\omega}{1+\omega}$: since by \eqref{e:range} we have $\lambda^{1-\alpha}= \lambda^{\frac{1}{1+\omega}}\geq D\mu$,
\eqref{e:w_C0} follows. In the same way
\begin{equation*}
\begin{split}
\|w\|_{C^1}&\leq \|w_o\|_{1}+\|\partial_tw_o\|_{0}+\|w_c\|_{1+\alpha}+\|\partial_tw_c\|_{\alpha}\\
&\leq C_v\sqrt{\delta}\lambda+C_{v,s}\sqrt{\delta}D\mu\lambda^{\alpha}\, .
\end{split}
\end{equation*}
Again choosing $\alpha = \frac{\omega}{1+\omega}$ and arguing as above we conclude \eqref{e:w_C1}.
\end{proof}

\section{Estimate on the energy}\label{s:est4}

\begin{proposition}\label{p:energy}
For any $\alpha\in (0,\frac{\omega}{1+\omega})$ there is a constant $C_{v,s}$, depending only on $\alpha$, $e$
and $\|v\|_0$, such that,
if the parameters satisfy \eqref{e:range}, then
\begin{equation}\label{e:energy_error}
\left| e(t)(1-\bar{\delta}) - \int |v_1|^2 (x,t)\, dx\right| \leq C_e D \ell + C_{v,s} 
\sqrt{\delta} \mu D\lambda^{\alpha -1} \qquad \forall t\, .
\end{equation}
\end{proposition}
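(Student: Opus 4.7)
The strategy is to expand $\int |v_1|^2$ using $v_1 = v + w_o + w_c$, then isolate the leading term and show that all remainders fit within the claimed error bound. The key identity is the algebraic decomposition \eqref{e:WoW} of $W\otimes W$ together with the definition \eqref{e:def_rho} of $\rho_\ell$: tracing \eqref{e:WoW} at $\xi = \lambda x$ and using that $\mathring R_\ell$ is traceless yields
\[
|w_o|^2(x,t) = \tr R_\ell(x,t) + \sum_{1\leq |k|\leq 2\lambda_0} \tr U_k(x,t,\lambda t)\,e^{i\lambda k\cdot x} = 3\rho_\ell(t) + \sum_{1\leq |k|\leq 2\lambda_0} \tr U_k(x,t,\lambda t)\, e^{i\lambda k\cdot x}\, .
\]
Integrating in $x$ and combining with \eqref{e:def_rho} gives
\[
e(t)(1-\bar\delta) - \int |v_1|^2\, dx = \Bigl[\int |v_\ell|^2 - \int |v|^2\Bigr] - 2\int v\cdot w\, dx - \sum_{k}\int \tr U_k\, e^{i\lambda k\cdot x}\, dx - \int\bigl(2 w_o\cdot w_c + |w_c|^2\bigr)\, dx\, .
\]
It remains to estimate each of the four terms on the right-hand side.

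\textbf{The four error terms.} For the first, the estimate \eqref{e:L2norm_est} together with $D\ell\leq \eta\delta\leq 1$ gives $\bigl|\int |v_\ell|^2 - \int |v|^2\bigr|\leq C_e D\ell$. For the second, split $v = v_\ell + (v-v_\ell)$. The contribution $|\int (v-v_\ell)\cdot w_o|\leq \|v-v_\ell\|_0\|w_o\|_0 \leq C_e\sqrt\delta D\ell\leq C_e D\ell$ by \eqref{e:conv0_stand} and \eqref{e:est_w_o}. For the principal piece $\int v_\ell\cdot w_o\, dx = \sum_{|k|=\lambda_0}\int(v_\ell\cdot B_k)\,a_k(x,t,\lambda t)\,e^{i\lambda k\cdot x}\,dx$, I apply the stationary-phase bound \eqref{e:average} of Proposition \ref{p:schauder}(i) with $m=1$: using \eqref{e:Holderproduct} and the bounds \eqref{e:a,r}, \eqref{e:conv1_stand} for $\|a_k\|_1$ and $[v_\ell]_1$ yields $|\int v_\ell\cdot w_o|\leq C_e \sqrt\delta\, \mu D\, \lambda^{-1}$. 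The contribution of $w_c$ is handled directly: $|\int v\cdot w_c|\leq (2\pi)^3\|v\|_0\|w_c\|_\alpha\leq C_{v,s}\sqrt\delta\,\mu D\,\lambda^{\alpha-1}$ by \eqref{e:est_w_c}.

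\textbf{Oscillatory mean and cross/corrector terms.} The third term is again an oscillatory integral: Proposition \ref{p:schauder}(i) with $m=1$ and the bound \eqref{e:U,r} for $[U_k]_1$ give $|\sum_k\int \tr U_k\, e^{i\lambda k\cdot x}|\leq C_e\,\delta \mu D\,\lambda^{-1}\leq C_e\sqrt\delta\mu D\,\lambda^{\alpha-1}$, since $\delta\leq 1$. For the fourth term, Cauchy--Schwarz together with \eqref{e:est_w_o} and \eqref{e:est_w_c} yields $|\int 2 w_o\cdot w_c|\leq C_{e,s}\delta\, D\mu\,\lambda^{\alpha-1}$ and $|\int |w_c|^2|\leq C_{e,s}^2\delta\,(D\mu\lambda^{\alpha-1})^2$. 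The crucial observation here, which is where the restriction $\alpha<\omega/(1+\omega)$ enters, is that \eqref{e:range} gives $\lambda^{1-\alpha}\geq (D\mu)^{(1+\omega)(1-\alpha)}\geq D\mu$; hence $D\mu\lambda^{\alpha-1}\leq 1$ and both contributions collapse to at most $C_{e,s}\sqrt\delta\mu D\lambda^{\alpha-1}$.

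\textbf{Main obstacle.} The only delicate step is the oscillatory integral $\int v_\ell\cdot w_o$: unlike the other terms, the natural naive bound $\|v\|_0\|w_o\|_0$ gives no smallness, so one really must exploit the $e^{i\lambda k\cdot x}$ oscillation via Proposition \ref{p:schauder}(i). Choosing $m=1$ is enough because we are only aiming at a $\lambda^{\alpha-1}$ rate; no Nash--Moser-style optimization of $m$ is needed in the energy estimate (in contrast to the Reynolds estimate of Section \ref{s:est5}). Summing the four bounds produces exactly $C_e D\ell + C_{v,s}\sqrt\delta\,\mu D\,\lambda^{\alpha-1}$, proving \eqref{e:energy_error}.
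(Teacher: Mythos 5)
Your proof is correct and follows essentially the same route as the paper: both expand $|v_1|^2$, use the trace of the $W\otimes W$ decomposition together with the definition of $\rho_\ell$ to cancel the main contribution of $|w_o|^2$ against $e(t)(1-\bar\delta)-\int|v_\ell|^2$, control $\int w_o\cdot v$ via the stationary-phase estimate (i) with $m=1$, estimate the $w_c$-terms by the $C^\alpha$ bound from Proposition~\ref{p:velocity_est}, and bound $|\int(|v_\ell|^2-|v|^2)|$ by $C_eD\ell$. The only differences are cosmetic: you split $v=v_\ell+(v-v_\ell)$ before applying stationary phase while the paper applies it to $\int w_o\cdot v$ directly, and you group the terms in a single display while the paper staggers them across \eqref{e:en_part_1}--\eqref{e:en_part_3}; neither change affects the argument or the final constants.
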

\begin{proof}
We write
\begin{equation}\label{e:quadratic_exp}
|v_1|^2 = |v|^2 + |w_o|^2 + |w_c|^2 + 2 w_o\cdot v + 2 w_o\cdot w_c + 2 w_c\cdot v\, .
\end{equation}
Since
\[
\left|\int w_c\cdot v\right|\leq \|w_c\|_0 \|v (\cdot, t)\|_{L^2} \leq \sqrt{e (t)} \|w_c\|_0\,,
\]
integrating the identity \eqref{e:quadratic_exp} we then reach the inequality
\[
\left| \int (|v_1|^2 - |w_o|^2 - |v|^2) \, dx \right| \leq C_e \|w_c\|_0 (1 + \|w_c\|_0 + \|w_o\|_0) + 2 \left| \int w_o\cdot v \right|\, .
\]
By Proposition \ref{p:velocity_est} we then have
\begin{align*}
\left| \int (|v_1|^2 - |w_o|^2 - |v|^2) \, dx \right| 
&\leq C_{e,s} \sqrt{\delta} D \mu \lambda^{\alpha-1} \left( 1 + C_e \sqrt{\delta} D \mu \lambda^{\alpha-1} + C_e \sqrt{\delta}\right)\\
&\quad + 2 \left| \int w_o\cdot v \right|\, 
\end{align*}
and hence, recalling that $\lambda \geq (D\mu)^{1+\omega}$ we reach
\[
\left| \int (|v_1|^2 - |w_o|^2 - |v|^2) \, dx \right| 
\leq C_{e,s} \sqrt{\delta} D \mu \lambda^{\alpha-1}
+ 2 \left| \int w_o\cdot v \right|
\]
Applying Proposition \ref{p:schauder}(i) and Proposition \ref{p:W} we obtain
\[
\left| \int w_o\cdot v \right| \leq C_e \sum_{k=|\lambda_0|} \frac{[v a_k]_1}{\lambda} \leq 
C_e \|v\|_0 \sqrt{\delta} D\mu \lambda^{-1} + C_e D \sqrt{\delta}\lambda^{-1}\, ,
\]
and hence
\begin{equation}\label{e:en_part_1}
\left| \int (|v_1|^2 - |w_o|^2-|v|^2)\right| \leq C_{v,s} \sqrt{\delta} D \mu \lambda^{\alpha-1}\, .
\end{equation}

\medskip

Next, taking the trace of identity \eqref{e:WoW} in Proposition \ref{p:W} we have
\[
|W(y,s,\tau,\xi)|^2=\tr R_\ell (y,s)+\sum_{1\leq |k| \leq 2\lambda_0}c_k(y,s,\tau)e^{ik\cdot \xi}
\]
for the coefficients $c_k = \tr U_k$. Recall that 
\[
\int_{\T^3} \tr R_\ell(x,t)\,dx = 3(2\pi)^3\rho_\ell(t)=e(t) (1-\bar{\delta}) -\int_{\T^3} |v_{\ell}|^2\,dx.
\]
Moreover, by Proposition \ref{p:schauder}(i) with $m=1$ we have
\begin{align}
\left| \int (|w_o|^2 (x,t) - {\rm tr}\, R_\ell (x,t)\, dx \right|&\leq
\sum_{1\leq |k| \leq 2\lambda_0} \left|\int c_k (x,t, \lambda t) e^{ik\cdot \lambda x}\,dx\right|\nonumber\\
&\leq C \lambda^{-1} \sum_{1\leq |k|\leq 2\lambda_0} [c_k]_1 \stackrel{\eqref{e:U,r}}{\leq} C_e \delta D 
\mu \lambda^{-1}\, .
\end{align}
Thus we conclude
\begin{equation}\label{e:en_part_2}
\left|\int \left(|w_o|^2 + |v_{\ell}|^2\right)\,dx - e(t) (1-\bar{\delta}) \right| \leq C_e \delta D \mu \lambda^{-1}\, .
\end{equation}
Finally, recall from \eqref{e:L2norm_est} that
\begin{equation}\label{e:en_part_3}
\left|\int (|v|^2 - |v_{\ell}|^2)\right|\leq C_e D \ell\, .
\end{equation}
Putting \eqref{e:en_part_1}, \eqref{e:en_part_2} and \eqref{e:en_part_3} together, we achieve
\eqref{e:energy_error}.
\end{proof}

\section{Estimates on the Reynolds stress}\label{s:est5}

\begin{proposition}\label{p:Reynolds}
For every $\alpha\in (0, \frac{\omega}{1+\omega})$, there is a constant $C_{v,s}$, depending only on $\alpha$, $\omega$, $e$
and $\|v\|_0$, such that, if
the conditions \eqref{e:range} are satisfied, then the following estimates hold:
\begin{eqnarray}
\|\mathring{R}_1\|_0&\leq& C_{v,s} \left(D\ell + \sqrt{\delta} D \mu \lambda^{2\alpha-1} + \sqrt{\delta} \mu^{-1}\lambda^\alpha \right)\label{e:est_Reyn}\\
\|\mathring{R}_1\|_{C^1} &\leq& C_{v,s} \lambda \left(\sqrt{\delta} D \ell + \sqrt{\delta} D \mu \lambda^{2\alpha-1} + \sqrt{\delta} \mu^{-1}\lambda^{\alpha} \right) \label{e:est_DReyn}\, .
\end{eqnarray}
\end{proposition}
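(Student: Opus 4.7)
The plan is to bound each of the four pieces in the definition \eqref{e:def_R_1} of $\mathring{R}_1$ separately. The last two are the easy ones: the mollification error $\mathring{R}_\ell - \mathring{R}$ is controlled by standard mollifier estimates (giving $\|\cdot\|_0\leq CD\ell$), and the commutator $w\otimes(v-v_\ell) + (v-v_\ell)\otimes w - \tfrac{2}{3}\langle v-v_\ell,w\rangle \Id$ is a product estimate combining $\|v - v_\ell\|_0\leq CD\ell$ with the $C^0$ bound \eqref{e:w_C0}. Together these produce the $D\ell$ piece of \eqref{e:est_Reyn}; the corresponding $C^1$ contributions use \eqref{e:w_C1} and standard product and mollifier bounds.

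For the oscillation term $\RR[\div(w\otimes w + \mathring{R}_\ell - \tfrac{1}{2}|w_o|^2\Id)]$, I would first expand $w\otimes w = w_o\otimes w_o + w_o\otimes w_c + w_c\otimes w_o + w_c\otimes w_c$ and then substitute the crucial decomposition \eqref{e:WoW}, namely $w_o\otimes w_o = R_\ell + \sum_k U_k e^{i\lambda k\cdot x}$. Taking the trace of the same decomposition and using $R_\ell = \rho_\ell \Id - \mathring{R}_\ell$, the $x$-independent part of $w_o\otimes w_o + \mathring{R}_\ell - \tfrac{1}{2}|w_o|^2\Id$ collapses to $-\tfrac{1}{2}\rho_\ell\Id$, which disappears under $\div$. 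The oscillatory part has the form $\sum_k M_k e^{i\lambda k\cdot x}$ with $M_k := U_k - \tfrac{1}{2}(\tr U_k)\Id$, and the decisive identity \eqref{e:Uk} gives $M_k k = 0$: hence the potentially dangerous $\lambda$-order contribution to $\div[M_k e^{i\lambda k\cdot x}]$ vanishes, leaving only $(\div M_k)e^{i\lambda k\cdot x}$. Applying Proposition \ref{p:schauder}(ii) combined with $\|U_k\|_1 \leq C_e\delta \mu D$ from Proposition \ref{p:W} then yields the needed bound. The $w_c$-cross terms are handled directly using Proposition \ref{p:velocity_est}, which makes them subdominant.

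The core of the argument is the transport term $\RR[\partial_t w + \div(w\otimes v_\ell + v_\ell\otimes w)]$. Using $\div v_\ell = \div w = 0$ this reduces to $\RR[\partial_t w + (v_\ell\cdot\nabla)w + (w\cdot\nabla)v_\ell]$. Substituting $w_o = \sum_k a_k B_k e^{i\lambda k\cdot x}$ and differentiating produces, for the $w_o$-part, a representation $\sum_k F_k(x,t)e^{i\lambda k\cdot x}$ in which the fatal $\lambda$-order contributions precisely combine into the expression $\partial_\tau a_k + i(k\cdot v_\ell)a_k$. The entire point of the Beltrami phases $\phi^{(j)}_{k,\mu}$ is that this combination enjoys the extra $\mu^{-1}$ decay recorded in \eqref{e:a_trans,r}--\eqref{e:a_trans,al}. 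Invoking Proposition \ref{p:schauder}(ii) with a suitably large $m$ and using the estimates of Proposition \ref{p:W} then yields the two characteristic contributions $C_{v,s}\sqrt{\delta}\mu^{-1}\lambda^\alpha$ (from the transport-balanced piece) and $C_{v,s}\sqrt{\delta}D\mu\lambda^{2\alpha-1}$ (from the remaining pieces, which absorb also the oscillation-term estimate since $\delta\leq \sqrt{\delta}\lambda^\alpha$). The $w_c$-contribution is again lower order by Proposition \ref{p:velocity_est}.

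Finally, the $C^1$ estimate \eqref{e:est_DReyn} is obtained by differentiating each of the above representations once more: Fourier modes at frequency $\lambda$ pick up a factor of $\lambda$, while in the commutator and mollification terms one uses the $C^1$ analogues of the product and mollifier estimates. The principal obstacle throughout is bookkeeping: one must repeatedly invoke \eqref{e:range} to compare the various powers of $\mu, \lambda, D, \ell, \delta$ that appear, and must take the parameter $m$ in Proposition \ref{p:schauder}(ii) large enough -- depending on $\alpha$ and $\omega$ -- so that the subdominant tails $\lambda^{-(m-\alpha)}[F]_m$ and $\lambda^{-m}[F]_{m+\alpha}$ get absorbed into the leading $\lambda^{-(1-\alpha)}\|F\|_0$ term, while the estimates of Proposition \ref{p:W} keep all intermediate higher-order seminorms under control via the scale $\ell$.
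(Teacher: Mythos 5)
Your overall decomposition and the two algebraic observations you single out are exactly the ones the paper uses: the trace identity \eqref{e:Uk} implies $(U_k-\tfrac12(\tr U_k)\Id)k=0$, which kills the dangerous $O(\lambda)$ contribution to $\div$ of the oscillation term; and the $\mu^{-1}$ gain in $\partial_\tau a_k+i(k\cdot v_\ell)a_k$, quantified by \eqref{e:a_trans,r}--\eqref{e:a_trans,al}, is what tames the transport balance. Your description of the choice of $m$ in Proposition~\ref{p:schauder} and the role of \eqref{e:range} in absorbing the high-order seminorms via $\ell$ is also on target.

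However, there is a genuine gap in the sentence ``The $w_c$-contribution is again lower order by Proposition~\ref{p:velocity_est}.'' This does \emph{not} suffice for the piece $\RR\,\partial_t w_c$. Proposition~\ref{p:velocity_est} gives $\|\partial_t w_c\|_\alpha\leq C_{v,s}\sqrt{\delta}D\mu\lambda^\alpha$, and the Schauder bound \eqref{e:Schauder_R} only gains one derivative, so the best you get this way is
\[
\|\RR\,\partial_t w_c\|_0\leq C_s\|\partial_t w_c\|_\alpha\leq C_{v,s}\sqrt{\delta}D\mu\lambda^\alpha,
\]
which is a full factor $\lambda$ larger than the required contribution $\sqrt{\delta}D\mu\lambda^{\alpha-1}$ and is not dominated by \emph{any} term on the right-hand side of \eqref{e:est_Reyn}. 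The operator $\RR$ has no $\lambda$-dependent smallness of its own; the extra $\lambda^{-1}$ must come from the oscillatory structure, and that structure is \emph{not} visible through the $C^r$ norms of $w_c$ alone. One must go back to the representation $w_c=-\tfrac{1}{\lambda}\Q u_c$, where $u_c$ is an explicit sum of modes $F_k(x,t)e^{i\lambda k\cdot x}$ (with the $\nabla a_k$ amplitudes), differentiate this in $t$, and then apply the stationary-phase estimate for $\RR\Q$ in Proposition~\ref{p:schauder}(ii) to $\partial_t u_c/\lambda$. That is the content of the paper's Step~4, and it is essential: without it the whole $C^0$ estimate breaks. A related, though milder, caveat applies to the $w_c$-cross terms $\div(v_\ell\otimes w_c+w_c\otimes v_\ell+\dots)$: you should keep them in divergence form and apply the $\RR\,\div$ Schauder estimate \eqref{e:Schauder_Rdiv}, rather than rewriting as $(v_\ell\cdot\nabla)w_c$, $(w_c\cdot\nabla)v_\ell$ and using plain $\RR$ — the latter loses a factor involving $D\ell^{-\alpha}$ that is not controlled.
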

\begin{proof} We split the Reynolds stress into seven parts:
\[
\mathring{R}_1 = \mathring{R}^1_1 + \mathring{R}^2_1 + \mathring{R}^3_1 + \mathring{R}^4_1 + \mathring{R}^5_1
+ \mathring{R}^6_1 + \mathring{R}^7_1
\]
where
\begin{eqnarray*}
\mathring{R}^1_1 &=& \mathring{R}_\ell - \mathring{R}\\
\mathring{R}^2_1 &=& [w\otimes (v - v_{\ell}) + (v-v_{\ell})\otimes w
 - \textstyle{\frac{2 \langle (v-v_{\ell}), w\rangle}{3}} {\rm Id}]\\
\mathring{R}^3_1 &=& \RR [{\rm div} (w_o\otimes w_o + \mathring{R}_{\ell} - \textstyle{\frac{|w_o|^2}{2}}{\rm Id})]\\
\mathring{R}^4_1 &=& \RR \partial_t w_c\\
\mathring{R}^5_1 &=& \RR {\rm div} ((v_\ell +w) \otimes w_c + w_c\otimes (v_\ell +w) - w_c\otimes w_c)\\
\mathring{R}^6_1 &=& \RR {\rm div} (w_o \otimes v_{\ell})\\
\mathring{R}^7_1 &=& \RR [ \partial_t w_o + {\rm div} (v_{\ell} \otimes w_o)] = \RR [\partial_t w_o + v_{\ell} \cdot \nabla w_o]\, .
\end{eqnarray*} 
In what follows we will estimate each term separately in the order given above.

\medskip

{\bf Step 1.} Recalling \eqref{e:conv0_stand}:
\begin{eqnarray}
\|\mathring{R}^1_1\|_0 &\leq& C D\ell \label{e:one}\\
\|\mathring{R}^1_1\|_{C^1} &\leq & 2D \label{e:one_D}\, .
\end{eqnarray}

\medskip

{\bf Step 2.} Again by \eqref{e:conv0_stand} and \eqref{e:conv1_stand}:
\begin{eqnarray*}
\|v-v_{\ell}\|_0 &\leq& C D \ell\\
\|v-v_{\ell}\|_{C^1} &\leq& 2D\, .
\end{eqnarray*}
Moreover, Proposition \ref{p:velocity_est} gives
\begin{align*}
\|w\|_0 &\leq C_{e} \sqrt{\delta}\\
\|w\|_{C^1}&\leq C_v \sqrt{\delta}{\lambda}\, .
\end{align*}
Using this and \eqref{e:Holderproduct} we conclude
\begin{eqnarray}
\|\mathring{R}^2_1\|_0 &\leq& C_ e \sqrt{\delta} D \ell \label{e:two}\\
\|\mathring{R}^2_1\|_{C^1} &\leq& C_e \sqrt{\delta} D + C_v \sqrt{\delta} \lambda D \ell
\leq C_v \sqrt{\delta} \lambda D \ell \label{e:two_D}\, .
\end{eqnarray}

\medskip

{\bf Step 3.} We next argue as in the proof of \cite[Lemma 7.2]{DS3}. Recall the formula \eqref{e:WoW} from Proposition \ref{p:W}. 
Since $\rho_\ell$ is a function of $t$ only, we can write $\mathring{R}^3_1$ as
\begin{align}
\div(w_o\otimes w_o-&\tfrac{1}{2}(|w_o|^2-\rho_\ell)\textrm{Id}+\mathring{R}_\ell)\nonumber\\
&=\div\left(w_o\otimes w_o-R_\ell-\tfrac{1}{2}(|w_o|^2-\tr R_\ell)\textrm{Id}\right)\nonumber\\
&=\div\left[\sum_{1\leq |k| \leq 2\lambda_0}(U_k-\tfrac{1}{2}(\tr U_k)\textrm{Id})(x,t,\lambda t)e^{i\lambda k\cdot x}\right]\nonumber\\
&\stackrel{\eqref{e:Uk}}{=}\sum_{1\leq |k| \leq 2\lambda_0}\div_y[U_k-\tfrac{1}{2}(\tr U_k)\textrm{Id}] (x, t, \lambda t) e^{i\lambda k\cdot x}\, .\label{e:third_dwarf}
\end{align}
We can therefore apply Proposition \ref{p:schauder} with 
\begin{equation}\label{e:choice_of_m}
m = \left\lfloor \frac{1+\omega}{\omega}\right\rfloor +1\, 
\end{equation}
and $\alpha\in (0,\frac{\omega}{1+\omega})$. Combining the corresponding estimates with Proposition \ref{p:W} we get
\begin{align}
\|\mathring{R}^3_1\|_0 &\leq C_s(m,\alpha) \sum_{1\leq |k| \leq 2\lambda_0} \left(\lambda^{\alpha-1} [U_k]_1
+ \lambda^{\alpha-m} [U_k]_{m+1} + \lambda^{-m} [U_k]_{m+1+\alpha}\right) \nonumber\\
&\leq C_s(m,\alpha)C_e \Bigl(\lambda^{\alpha-1} \delta \mu D + \lambda^{\alpha-m} \delta \left( \mu^{m+1} D^{m+1} + \mu D \ell^{-m}\right)\nonumber\\ 
&\qquad +  \lambda^{-m} \delta \left(\mu^{m+1+\alpha} D^{m+1+\alpha} + \mu D \ell^{-m-\alpha}\right)\Bigr)\nonumber\\
&\overset{\eqref{e:range}}{\leq} C_{e,s} \delta \mu D \lambda^{\alpha-1}\label{e:three}\, .
\end{align}

Observe that in the last inequality we have used \eqref{e:range}: indeed, since $m\geq \frac{1+\omega}{\omega}$ by \eqref{e:choice_of_m}, we get
\begin{equation}\label{e:reason_for_m}
\lambda\geq \max\left\{\ell^{-(1+\omega)},\,(\mu D)^{1+\omega}\right\}\geq \max\left\{\ell^{-\frac{m}{m-1}},\,(\mu D)^{\frac{m}{m-1}}\right\}\,.
\end{equation} 
Next, differentiating \eqref{e:third_dwarf} in space and using the same argument:
\begin{align*}
\|\mathring{R}^3_1\|_1 &\leq C_e \lambda \|\mathring{R}^3_1\|_0\nonumber\\ 
&\quad + C_s \sum_{1\leq |k| \leq 2\lambda_0} \left(\lambda^{\alpha-1} [U_k]_2
+ \lambda^{\alpha-m} [U_k]_{m+2} + \lambda^{-m} [U_k]_{m+2+\alpha}\right) \nonumber\\
&\leq C_{e,s} \delta \mu D \lambda^\alpha.
\end{align*}
Finally, differentiating \eqref{e:third_dwarf} in time:
\begin{align*}
\partial_t \div(w_o\otimes w_o-&\tfrac{1}{2}(|w_o|^2-\rho_\ell)\textrm{Id}+\mathring{R}_\ell)\\
&=\sum_{1\leq |k| \leq 2\lambda_0}\div_y[\partial_s U_k-\tfrac{1}{2}(\tr \partial_s U_k)\textrm{Id}] (x, t, \lambda t) e^{i\lambda k\cdot x}\nonumber\\
&\quad +
\lambda \sum_{1\leq |k| \leq 2\lambda_0}\div_y[\partial_\tau U_k-\tfrac{1}{2}(\tr \partial_\tau U_k)\textrm{Id}] (x, t, \lambda t) e^{i\lambda k\cdot x}\, .
\end{align*}
Thus, applying the same argument as above,
\begin{align*}
\|\partial_t \mathring{R}^3_1\|_0 &\leq C_s\sum_{1\leq |k| \leq 2\lambda_0}
\left(\lambda^{\alpha-1} [\partial_s U_k]_1 + \lambda^{\alpha-m} [\partial_s U_k]_{m+1}
+ \lambda^{-m} [\partial_s U_k]_{m+1+\alpha}\right)\\
&\qquad + C_s \lambda\sum_{1\leq |k| \leq 2\lambda_0}
\big(\lambda^{\alpha-1} [\partial_\tau U_k]_1 + \lambda^{\alpha-m} [\partial_\tau U_k]_{m+1}\\
&\qquad\qquad\qquad\qquad\qquad+ \lambda^{-m} [\partial_\tau U_k]_{m+1+\alpha}\big)\\
&\leq C_{v,s} (\mu D + \ell^{-1}+ \lambda) \delta \mu D \lambda^{\alpha-1} \\
&\leq C_{v,s} \delta \mu D \lambda^\alpha\, .
\end{align*}
Finally, putting these last two estimates together:
\begin{equation}\label{e:three_D}
\|\mathring{R}^3_1\|_{C^1} \leq \|\mathring{R}^3_1\|_1 + \|\partial_t \mathring{R}^3_1\|_0
\leq C_{v,s} \delta \mu D \lambda^\alpha\, .
\end{equation}

\medskip

{\bf Step 4.} In this case we argue as in \cite[Lemma 7.3]{DS3}. Differentiate in $t$ the identity \eqref{e:w_cu_c} to
get 
\[
\partial_t w_c=\tfrac{1}{\lambda}\Q \partial_t u_c\, ,
\]
where
\begin{equation*}
\begin{split}
\partial_tu_c(x,t)=&\lambda \sum_{|k|=\lambda_0}i(\nabla \partial_\tau a_k)(x,t,\lambda t)\times \frac{k\times B_k}{|k|^2}e^{i\lambda x\cdot k}+\\
&\,+\sum_{|k|=\lambda_0}i(\nabla \partial_s a_k)(x,t,\lambda t)\times \frac{k\times B_k}{|k|^2}e^{i\lambda x\cdot k}\, .
\end{split}
\end{equation*}
Choose again $m$ as in \eqref{e:choice_of_m} and apply the Propositions \ref{p:schauder} and
\ref{p:W} to get
\begin{align}
\|\mathring{R}^4_1\|_0&\leq C_s \sum_{|k| =\lambda_0}
\left(\lambda^{\alpha-1} [\partial_\tau a_k]_1 + \lambda^{\alpha-m} [\partial_\tau a_k]_{m+1}
+ \lambda^{-m} [\partial_\tau a_k]_{m+1+\alpha}\right)\nonumber\\
& + \frac{C_s}{\lambda} \sum_{|k| =\lambda_0}
\left(\lambda^{\alpha-1} [\partial_s a_k]_1 + \lambda^{\alpha-m} [\partial_s a_k]_{m+1}
+ \lambda^{-m} [\partial_s a_k]_{m+1+\alpha}\right)\nonumber\\
\leq &C_v (\lambda^{-1} \mu D + \lambda^{-1} \ell^{-1}+ 1) \sqrt{\delta} \mu D \lambda^{\alpha-1} 
\leq C_v \sqrt{\delta} \mu D \lambda^{\alpha-1}\, ,\label{e:four}
\end{align}
where in the last inequality we have again used \eqref{e:reason_for_m}. 
Following the same strategy as in Step 3:
\begin{align}
\|\mathring{R}^4_1\|_1&\leq C_e \lambda \|\mathring{R}^4_1\|_0\nonumber\\
&\quad + C_s \sum_{|k| =\lambda_0}
\left(\lambda^{\alpha-1} [\partial_\tau a_k]_2 + \lambda^{\alpha-m} [\partial_\tau a_k]_{m+2}
+ \lambda^{-m} [\partial_\tau a_k]_{m+2+\alpha}\right)\nonumber\\
&\quad + \frac{C_s}{\lambda} \sum_{|k| = \lambda_0}
\big(\lambda^{\alpha-1} [\partial_s a_k]_2 + \lambda^{\alpha-m} [\partial_s a_k]_{m+2}+ \lambda^{-m} [\partial_s a_k]_{m+2+\alpha}\big)\nonumber\\
&\leq C_{v,s} \sqrt{\delta} \mu D \lambda^\alpha\label{e:dwarf_four}\, .
\end{align}
Differentiating in time
\begin{align}
&\|\partial_t \mathring{R}^4_1\|_0 \nonumber\\ 
&\,\leq C_s \lambda \sum_{|k| = \lambda_0}
\left(\lambda^{\alpha-1} [\partial_{\tau\tau} a_k]_1 + \lambda^{\alpha-m} [\partial_{\tau\tau} a_k]_{m+1}
+ \lambda^{-m} [\partial_{\tau\tau} a_k]_{m+1+\alpha}\right)\nonumber\\
&\quad+ C_s \sum_{|k| = \lambda_0}
\left(\lambda^{\alpha-1} [\partial_{\tau s} a_k]_1 + \lambda^{\alpha-m} [\partial_{\tau s} a_k]_{m+1}
+ \lambda^{-m} [\partial_{\tau s} a_k]_{m+1+\alpha}\right)\nonumber\\
&\quad+\frac{C_s}{\lambda} \sum_{|k| = \lambda_0}
\left(\lambda^{\alpha-1} [\partial_{ss} a_k]_1 + \lambda^{\alpha-m} [\partial_{ss} a_k]_{m+1}
+ \lambda^{-m} [\partial_{ss} a_k]_{m+1+\alpha}\right) \nonumber\\
&\leq  C_{v,s} \sqrt{\delta} \mu D \lambda^\alpha\label{e:dwarf_four_2}\, .
\end{align}
Putting \eqref{e:dwarf_four} and \eqref{e:dwarf_four_2} together we obtain
\begin{equation}\label{e:four_D}
\|\mathring{R}^4_1\|_{C^1} \leq C_{v,s} \sqrt{\delta}\mu D \lambda^\alpha\, .
\end{equation}

\medskip

{\bf Step 5.} In this step we argue as in \cite[Lemma 7.4]{DS3}. We first estimate
\begin{equation*}
\begin{split}
\|(v_\ell + w)\otimes w_c+&w_c\otimes (v_\ell +w) -w_c\otimes w_c\|_\alpha\leq \\
&\leq C(\|v_\ell +w\|_0\|w_c\|_\alpha+\|v_\ell +w\|_\alpha\|w_c\|_0+\|w_c\|_0\|w_c\|_\alpha)\, .\\
&\leq C \|w_c\|_\alpha \left(\|v\|_0 + \|w_o\|_\alpha + \|w_c\|_\alpha\right)\, .
\end{split}
\end{equation*}
From Proposition \ref{p:velocity_est} we then conclude
\[
\|(v_\ell +w) \otimes w_c+w_c\otimes (v_\ell +w)-w_c\otimes w_c\|_\alpha
\leq C_{v,s} \sqrt{\delta} D \mu \lambda^{2\alpha-1}\,  .
\]
By the Schauder estimate \eqref{e:Schauder_Rdiv}, we get
\begin{equation}\label{e:five}
\|\mathring{R}^5_1\|_0 \leq C_{v,s} \sqrt{\delta} D \mu \lambda^{2\alpha-1} .
\end{equation}
As for $\|\mathring{R}^5_1\|_1$ the same argument yields 
\[
\|\mathring{R}^5_1\|_1 \leq C_{v,s} \sqrt{\delta} D \mu \lambda^{2\alpha}\, .
\] 
Finally 
\begin{align}
&\|\partial_t ((v_\ell + w) \otimes w_c+w_c\otimes (v_\ell +w)-w_c\otimes w_c)\|_\alpha\nonumber\\
\leq &\|w_c\|_\alpha \left( \|\partial_t v_\ell \|_\alpha + \|\partial_t w_o\|_\alpha + \|\partial_t w_c\|_\alpha\right) 
 + \|\partial_t w_c\|_\alpha \left( \|v_\ell\|_\alpha + \|w_o\|_\alpha\right)\nonumber\\
\leq &C_{e,s} \sqrt{\delta} D \mu \lambda^{\alpha-1} \left( C_h D \ell^{-\alpha} +  C_v \sqrt{\delta} \lambda^{1+\alpha}
+ C_{v,s}\sqrt{\delta} D \mu \lambda^\alpha\right)\nonumber\\
& + C_{v,s} D \mu \lambda^\alpha \left(C_h D \ell^{1-\alpha} + C_e \sqrt{\delta} \lambda^\alpha\right)  
\leq C_{v,s} \delta D  \mu \lambda^{2\alpha}\, .
\end{align}
Hence we conclude
\begin{equation}\label{e:five_D}
\|\mathring{R}^5_1\|_{C^1} \leq C_{v,s} \sqrt{\delta} D \mu \lambda^{2\alpha} \, .
\end{equation}

\medskip

{\bf Step 6.} In this step we argue as in \cite[Lemma 7.5]{DS3}. 
Since $B_k\cdot k=0$, we can write
\begin{equation*}
\begin{split}
\div(v_\ell \otimes w_o)&=(w_o\cdot\nabla) v_\ell +(\div w_o)v_\ell \\
&=\sum_{|k|=\lambda_0}\left[a_k(B_k\cdot \nabla)v_{\ell}+v_{\ell}
(B_k\cdot\nabla) a_k\right]e^{i\lambda k\cdot x}\, .
\end{split}
\end{equation*}
Choose $m$ as in \eqref{e:choice_of_m}, apply Propositions \ref{p:schauder}
and \ref{p:W} and use \eqref{e:reason_for_m} to get
\begin{align}
\|\mathring{R}^6_1\|_0 &\leq C_s \sum_{|k|=\lambda_0}
\lambda^{\alpha-1} \left(\|a_k\|_0 [v_\ell]_1 + \|v_\ell\|_0 [a_k]_1\right)
\nonumber\\
&\quad + C_s \sum_{|k|=\lambda_0}\lambda^{-m+\alpha} \left(\|a_k\|_0 [v_\ell]_{m+1} + \|v_\ell\|_0 [a_k]_{m+1}\right)\nonumber\\
&\quad+ C_s \sum_{|k|=\lambda_0} \lambda^{-m} \left(\|a_k\|_0 [v_\ell]_{m+1+\alpha}
+ \|v_\ell\|_0 [a_k]_{m+1+\alpha}\right)\nonumber\\
&\leq C_{v,s} \lambda^{\alpha-1} \sqrt{\delta} (D +D\mu) + C_{v,s} \lambda^{ -m+\alpha} \sqrt{\delta}
\left(D\ell^{-m} + D^{m+1} \mu^{m+1}\right)\nonumber\\
&\quad + C_{v,s} \lambda^{-m} \sqrt{\delta} \left( D \ell^{-m-\alpha}
+ D^{m+1+\alpha} \mu^{m+1+\alpha}\right)\nonumber\\
&\leq C_{v,s}\sqrt{\delta} D \mu \lambda^{\alpha-1}\, .\label{e:six}
\end{align}
As in the Steps 3 and 4:
\begin{align}
\|\mathring{R}^6_1\|_1 &\leq  C_e \lambda \|\mathring{R}^6_1\|_0
+ C_s \sum_{|k|=\lambda_0}
\lambda^{\alpha-1} \left(\|a_k\|_0 [v_\ell]_2 + \|v_\ell\|_0 [a_k]_2\right)
\nonumber\\
&\quad + C_s \sum_{|k|=\lambda_0}\lambda^{\alpha-m} \left(\|a_k\|_0 [v_\ell]_{m+2} + \|v_\ell\|_0 [a_k]_{m+2}\right)\nonumber\\
&\quad+ C_s \sum_{|k|=\lambda_0} \lambda^{-m} \left(\|a_k\|_0 [v_\ell]_{m+2+\alpha}
+ \|v_\ell\|_0 [a_k]_{m+2+\alpha}\right)\nonumber\\
&\leq C_{v,s} \sqrt{\delta} D \mu \lambda^\alpha\, .\label{e:dwarf_six}
\end{align}
As for the time derivative, we can estimate 
\[
\|\partial_t \mathring{R}^6_1\|_0 \leq {\rm (I)} + {\rm (II)} + {\rm (III)}\, ,
\]
where
\begin{align}
{\rm (I)} &= C_s \sum_{|k|=\lambda_0} \lambda^\alpha
\left(\|\partial_\tau a_k\|_0 [v_\ell]_1 + \|v_\ell\|_0 [\partial_\tau a_k]_1\right)
\nonumber\\
&\quad+ C_s \sum_{|k|=\lambda_0}
\lambda^{\alpha-1} \left(\|\partial_s a_k\|_0 [v_\ell]_1 + \|v_\ell\|_0 [\partial_s a_k]_1\right)
\nonumber\\
&\quad + C_s \sum_{|k|=\lambda_0}
\lambda^{\alpha-1} \left(\|a_k\|_0 [\partial_t v_\ell]_1 + \|\partial_t v_\ell\|_0 [a_k]_1\right)\, ,
\end{align}
\begin{align}
{\rm (II)} &= C_s \sum_{|k|=\lambda_0}\lambda^{\alpha+ 1-m} \left(\|\partial_\tau a_k\|_0 [v_\ell]_{m+1} + \|v_\ell\|_0 [\partial_\tau a_k]_{m+1}\right)\nonumber\\
&\quad + C_s \sum_{|k|=\lambda_0}\lambda^{\alpha-m} \left(\|\partial_s a_k\|_0 [v_\ell]_{m+1} + \|v_\ell\|_0 [\partial_s a_k]_{m+1}\right)\nonumber\\
&\quad + C_s \sum_{|k|=\lambda_0}\lambda^{\alpha -m} \left(\|a_k\|_0 [\partial_t v_\ell]_{m+1} + \|\partial_t v_\ell\|_0 [a_k]_{m+1}\right)
\end{align}
and
\begin{align}
{\rm (III)} &= C_s \sum_{|k|=\lambda_0} \lambda^{1-m} \left(\|\partial_\tau a_k\|_0 [v_\ell]_{m+1+\alpha}
+ \|v_\ell\|_0 [\partial_\tau a_k]_{m+1+\alpha}\right)\nonumber\\
&\quad+ C_s \sum_{|k|=\lambda_0} \lambda^{-m} \left(\|\partial_s a_k\|_0 [v_\ell]_{m+1+\alpha}
+ \|v_\ell\|_0 [\partial_s a_k]_{m+1+\alpha}\right)\nonumber\\
&\quad+ C_s \sum_{|k|=\lambda_0} \lambda^{-m} \left(\|a_k\|_0 [\partial_t v_\ell]_{m+1+\alpha}
+ \|\partial_t v_\ell\|_0 [a_k]_{m+1+\alpha}\right)\, .
\end{align}
Again using Proposition \ref{p:W} and the conditions \eqref{e:range} we can see that
\begin{equation}\label{e:dwarf_six_2}
\|\partial_t \mathring{R}^6_1\|_0\leq C_{v,s} \sqrt{\delta} D \mu \lambda^\alpha\, .
\end{equation}
Thus,
\begin{equation}\label{e:six_D}
\|\mathring{R}^6_1\|_{C^1} \leq \|\mathring{R}^6_1\|_1 +
\|\partial_t \mathring{R}^6_1\|_0 \leq C_{v,s} \sqrt{\delta} D\mu \lambda^\alpha\, .
\end{equation}

\medskip

{\bf Step 7.} Finally, to bound the last term we argue as in \cite[Lemma 7.1]{DS3}.
We write
\begin{equation*}
\mathring{R}^7_1=\RR(\partial_tw_o+v_\ell \cdot \nabla w_o)=\mathring{R}^8_1+\mathring{R}^9_1+\mathring{R}^{10}_1,
\end{equation*}
where
\begin{align*}
\mathring{R}^8_1&:=\lambda\RR\left(\sum_{|k|=\lambda_0}(\partial_\tau a_k+i(k\cdot v_\ell )a_k)(x,t,\lambda t)B_ke^{i\lambda k\cdot x}\right)\\
\mathring{R}^9_1&:=\RR\left(\sum_{|k|=\lambda_0}(\partial_sa_k)(x,t,\lambda t)B_ke^{i\lambda k\cdot x}\right)\\
\mathring{R}^{10}_1&:=\RR \left(\sum_{|k|=\lambda_0}(v_\ell \cdot \nabla_y a_k) (x,t,\lambda t) B_ke^{i\lambda k\cdot x}\right)\, .
\end{align*}
The arguments of Step 6 have already shown
\begin{align}
\|\mathring{R}^{10}_1\|_0 &\leq C_{v,s} \sqrt{\delta} D \mu \lambda^{\alpha-1}\label{e:ten}\\
\|\mathring{R}^{10}_1\|_{C^1} &\leq C_{v,s} \sqrt{\delta} D \mu \lambda^{\alpha}\label{e:ten_D}\, .
\end{align}
As for $\mathring{R}^9_1$, we apply Proposition \ref{p:schauder} with $m$ as in \eqref{e:choice_of_m}
to get
\begin{align}
\|\mathring{R}^9_1\|_0 &\leq C_s \sum_{|k|=\lambda_0} \left(\lambda^{\alpha-1} \|\partial_s a_k\|_0
+ \lambda^{-m+\alpha} [\partial_s a_k]_m + \lambda^{-m} [\partial_s a_k]_{m+\alpha}\right)\nonumber\\
&\leq C_{e,s} \sqrt{\delta} D \mu \lambda^{\alpha-1}\label{e:nine}\, .
\end{align}
Analogously
\begin{align}
\|\mathring{R}^9_1\|_1 &\leq C_e \lambda \|\mathring{R}^9_1\|_0\nonumber\\
&\quad + C_s \sum_{|k|=\lambda_0} \left(\lambda^{\alpha-1} [\partial_s a_k]_1
+ \lambda^{-m+\alpha} [\partial_s a_k]_{m+1} + \lambda^{-m} [\partial_s a_k]_{m+1+ \alpha}\right)\nonumber\\
&\leq C_{e,s}\sqrt{\delta} D \mu \lambda^{\alpha}\label{e:dwarf_nine}\, 
\end{align}
and
\begin{align}
\|\partial_t \mathring{R}^9_1\|_0 &\leq C_s \sum_{|k|=\lambda_0} \left(\lambda^{\alpha-1} \|\partial_{ss} a_k\|_0
+ \lambda^{-m+\alpha} [\partial_{ss} a_k]_m + \lambda^{-m} [\partial_{ss} a_k]_{m+\alpha}\right)\nonumber\\
&\quad + C_s \sum_{|k|=\lambda_0} \left(\lambda^{\alpha} \|\partial_{s\tau} a_k\|_0
+ \lambda^{1-m+\alpha} [\partial_{s\tau} a_k]_m + \lambda^{1-m} [\partial_{s\tau} a_k]_{m+\alpha}\right)\nonumber\\
&\leq C_{v,s} \sqrt{\delta} D \mu \lambda^{\alpha}\label{e:dwarf_nine_2}\, ,
\end{align}
which in turn imply
\begin{equation}\label{e:nine_D}
\|\mathring{R}^9_1\|_{C^1}\leq C_{v,s} \sqrt{\delta} D \mu \lambda^\alpha\, .
\end{equation}
For the term $\mathring{R}^8_1$ define the functions
\[
b_k (y,s, \tau) := (\partial_\tau a_k + i (k\cdot v_\ell) a_k) (y,s,\tau)\, .
 \]
Applying Proposition \ref{p:schauder} with $m$ as in \eqref{e:choice_of_m} then yields
\begin{align}
\|\mathring{R}^8_1\|_0
&\leq C_s \sum_{|k|=\lambda_0} \left(\lambda^{\alpha} \|b_k\|_0 + \lambda^{\alpha+1-m} [b_k]_m+
\lambda^{1-m} [b_k]_{m+\alpha}\right)\nonumber\\
&\leq C_{e,s} \sqrt{\delta} \mu^{-1} \lambda^\alpha  + C_{e,s} \sqrt{\delta} \left(\mu^{m-1} D^m + D \ell^{1-m}\right)
\lambda^{\alpha+1-m}\nonumber\\ 
&\quad + C_{e,s} \sqrt{\delta} \left( \mu^{m-1+\alpha} D^{m+\alpha} + D \ell^{1-m-\alpha}\right)
\lambda^{1-m}\nonumber\\
&\leq C_{e,s} \sqrt{\delta} \mu^{-1} \lambda^\alpha\, .
\end{align}
Similarly,
\begin{align}
\|\mathring{R}^8_1\|_1
&\leq C_e \lambda \|\mathring{R}^8_1\|_0\nonumber\\
&+C_s \sum_{|k|=\lambda_0} \left(\lambda^{\alpha} [b_k]_1 + \lambda^{\alpha+1-m} [b_k]_{m+1}+
\lambda^{1-m} [b_k]_{m+1+\alpha}\right)\nonumber\\
&\leq C_{e,s} \sqrt{\delta} \mu^{-1} \lambda^{1+\alpha}\, .\label{e:dwarf_8}
\end{align}
Finally, differentiating $\mathring{R}^8_1$ in time and using the same arguments:
\begin{align}
\|\partial_t \mathring{R}^8_1\|_0
&\leq C_s \lambda \sum_{|k|=\lambda_0} \left(\lambda^{\alpha} \|\partial_\tau b_k\|_0 + \lambda^{\alpha+1-m} [\partial_\tau b_k]_{m}+
\lambda^{1-m} [\partial_\tau b_k]_{m+\alpha}\right)\nonumber\\
&\quad + C_s \sum_{|k|=\lambda_0} \left(\lambda^{\alpha} \|\partial_s b_k\|_0 + \lambda^{\alpha+1-m} [\partial_s b_k]_{m}+
\lambda^{1-m} [\partial_s b_k]_{m+\alpha}\right)\nonumber\\
&\leq C_{v,s} \sqrt{\delta} \mu^{-1} \lambda^{1+\alpha}\, .\label{e:dwarf_8_2}
\end{align}
Therefore
\begin{equation}\label{e:eight_D}
\|\mathring{R}^8_1\|_{C^1} \leq C_{v,s} \sqrt{\delta} \mu^{-1} \lambda^{1+\alpha}\, .
\end{equation}
Summarizing
\begin{align}
\|\mathring{R}^7_1\|_0\leq C_{v,s} \sqrt{\delta} \left(D \mu \lambda^{\alpha-1} + \mu^{-1} \lambda^\alpha\right)\label{e:seven}\\
\|\mathring{R}^7_1\|_{C^1}\leq C_{v,s} \sqrt{\delta} \left(D \mu \lambda^{\alpha} + \mu^{-1} \lambda^{\alpha+1}\right)\, .\label{e:seven_D}
\end{align}

\medskip

{\bf Conclusion.} From \eqref{e:one}, \eqref{e:two},
\eqref{e:three}, \eqref{e:four}, \eqref{e:five}, \eqref{e:six} and \eqref{e:seven},
we conclude
\begin{align}
\|\mathring{R}_1\|_0 &\leq C_{v,s} \Big( D \ell + \sqrt{\delta} D\ell + \delta D \mu \lambda^{\alpha-1}
+ \sqrt{\delta} D \mu \lambda^{\alpha-1}\nonumber\\
&\qquad\quad  + \sqrt{\delta} D \mu \lambda^{2\alpha-1} + \sqrt{\delta}\mu^{-1}
\lambda^\alpha\Big)\nonumber\\
&\leq C_{v,s} \left(D\ell + \sqrt{\delta} D \mu \lambda^{2\alpha-1} + \sqrt{\delta}  \mu^{-1}\lambda^\alpha\right)\, .
\end{align}
From \eqref{e:one_D}, \eqref{e:two_D},
\eqref{e:three_D}, \eqref{e:four_D}, \eqref{e:five_D}, \eqref{e:six_D} and \eqref{e:seven_D},
we conclude
\begin{align}
\|\mathring{R}_1\|_{C_1} &\leq C_{v,s} \Big( D + \sqrt{\delta} \lambda D\ell + \delta D \mu \lambda^{\alpha}
+ \sqrt{\delta} D \mu \lambda^{\alpha}\nonumber\\
&\qquad\quad  + \sqrt{\delta} D \mu \lambda^{2\alpha} + \sqrt{\delta}\mu^{-1}
\lambda^{1+\alpha}\Big)\nonumber\\
&\leq C_{v,s} \left(\sqrt{\delta} D \ell \lambda + \sqrt{\delta} D \mu \lambda^{2\alpha} + \sqrt{\delta} D \mu^{-1}\lambda^{\alpha+1}\right)\,.
\end{align}
In the last inequality we have used \eqref{e:range} once more: $\sqrt{\delta}\mu D\geq D\delta^{-1/2}\geq D$.
\end{proof}

\section{Proof of Proposition \ref{p:iterate}}\label{s:iterate_proof}

{\bf Step 1.} We now specify the choice of the parameters, in the order in which they are chosen.
Recall that $\eps$ is a fixed positive number, given by the proposition. The exponent 
$\omega$ has already been chosen according to
\begin{equation}\label{e:choice_of_omega}
1+\eps = \frac{1+\omega}{1-\omega}\, .
\end{equation}
Next we choose a suitable exponent $\alpha$ for which we can apply the Propositions \ref{p:energy} and 
\ref{p:Reynolds}. To be precise we set
\begin{equation}\label{e:choice_of_alpha}
\alpha = \frac{\omega}{2(1+\omega)}\, .
\end{equation}
The reason for these choices will become clear in the following. For the moment we just observe that
both $\alpha$ and $\omega$ depend only on $\varepsilon$ and that
$\alpha\in (0, \frac{\omega}{1+\omega})$, i.e. both Propositions \ref{p:energy}
and \ref{p:Reynolds} are applicable.

We next choose:
\begin{equation}\label{e:choice_of_ell}
\ell = \frac{1}{L_v}\frac{\bar{\delta}}{D}\, 
\end{equation}
with $L_v$ being a sufficiently large constant, which depends only on $\|v\|_0$
and $e$. 

Next, we impose
\begin{equation}\label{e:choice_of_mu}
\mu^2 D = \lambda\,
\end{equation} 
and
\begin{equation}\label{e:choice_of_lambda}
\lambda = \Lambda_v \left( \frac{D\delta}{\bar{\delta}^2}\right)^{\frac{1}{1-4\alpha}}
= \Lambda_v \left(\frac{D\delta}{\bar{\delta}^2}\right)^{\frac{1+\omega}{1-\omega}}
= \Lambda_v \left(\frac{D\delta}{\bar{\delta}^2}\right)^{1+\eps}\, ,
\end{equation}
where $\Lambda_v$ is a sufficiently large constant, which depends only on $\|v\|_0$.
Concerning the constants $L_v$ and $\Lambda_v$ we will see that they will be chosen in this order in Step 3 below.
Observe also that $\mu$, $\lambda$ and $\frac{\lambda}{\mu}$ must be integers. However, this can be
reached by imposing the less stringent constraints
\[
\frac{\lambda}{2} \leq \mu^2 D \leq \lambda
\]
and
\[
\Lambda_v \left(\frac{D\delta}{\bar{\delta}^2}\right)^{1+\eps}\leq
\lambda \leq 2 \Lambda_v \left(\frac{D\delta}{\bar{\delta}^2}\right)^{1+\eps}\, ,
\]
provided $\Lambda_v$ is larger than some universal constant. This would require just minor
adjustments in the rest of the argument.

\medskip

{\bf Step 2. Compatibility conditions.} We next check that all the conditions in \eqref{e:range} are satisfied by our choice
of the parameters.

First of all, since $\bar{\delta}\leq \delta$, the inequality $\ell^{-1}\geq \frac{D}{\eta\delta}$ is for sure
achieved if we impose
\begin{equation}\label{e:Lv}
L_v\geq \eta^{-1}\, .
\end{equation}

Next, \eqref{e:choice_of_lambda} and $\Lambda_v\geq 1$ implies
\[
\mu = \sqrt{\frac{\lambda}{D}} \geq \frac{\sqrt{\delta}}{\bar{\delta}} \geq \delta^{-1}
\]
because by assumption $\bar{\delta} \leq \delta^{\frac{3}{2}}$.

Also,
\[
\frac{\lambda}{(\mu D)^{1+\omega}} = \frac{\lambda^{\frac{1-\omega}{2}}}{D^{\frac{1+\omega}{2}}}
= \Lambda_v^{\frac{1-\omega}{2}} \left(\frac{\delta}{\bar{\delta}^2}\right)^{\frac{1+\omega}{2}}\, .
\]
Since $\omega<1$, $\Lambda_v\geq 1$ and $\bar{\delta}\leq \delta$, we conclude
$\lambda \geq (\mu D)^{1+\omega}$. Finally
\[
\lambda \ell^{1+\omega} = \Lambda_v \left(\frac{D\delta}{\bar{\delta}^2}\right)^{\frac{1+\omega}{1-\omega}}
\left(L_v^{-1} \bar{\delta} D^{-1}\right)^{1+\omega}
= \frac{\Lambda_v}{L_v^{1+\omega}} \left(D^\omega \frac{\delta}{\bar{\delta}^{1+\omega}}\right)^{\frac{1+\omega}{1-\omega}}\, .
\]
Thus, by requiring 
\begin{equation}\label{e:Lambdav}
\Lambda_v \geq L_v^{1+\omega}
\end{equation}
we satisfy $\lambda \geq \ell^{-(1+\omega)}$.
Hence, all the requirements in \eqref{e:range} are satisfied provided that the constants $L_v$ and $\Lambda_v$ are chosen to satisfy \eqref{e:Lv} and \eqref{e:Lambdav}.

\medskip

{\bf Step 3. $C^0$ estimates.} Having verified that $\alpha\in (0, \frac{\omega}{1+\omega})$ and
that \eqref{e:range} holds, we can apply the Propositions \ref{p:velocity_est},
\ref{p:energy} and \ref{p:Reynolds}. Proposition \ref{p:Reynolds} implies
\begin{align}
\|\mathring{R}_1\|_0 &\leq C_v \left(D\ell + \sqrt{\delta} D^{\frac{1}{2}} \lambda^{2\alpha-\frac{1}{2}} + \sqrt{\delta} D^{\frac{1}{2}} \lambda^{\alpha-\frac{1}{2}}\right)\nonumber\\
&\leq \frac{C_v}{L_v} \bar{\delta} + \frac{C_v}{\Lambda_v^{\frac{1+\eps}{2}}}
\bar{\delta}\, 
\end{align}
(since now the exponent $\alpha$ has been fixed, we can forget about the $\alpha$-dependence
of the constants in the estimates of Proposition \ref{p:energy} and \ref{p:Reynolds}).
Choosing first $L_v$ and, then, $\Lambda_v$ sufficiently large, we can achieve the desired inequalities \eqref{e:Lv}-\eqref{e:Lambdav} together with 
\[
\|\mathring{R}_1\|_0 \leq \eta \bar{\delta}\, .
\]
Next, using Proposition \ref{p:energy}, it is also easy to check that, by this choice,
\eqref{e:energy_est} is satisfied as well. Furthermore, recall that, by Proposition \ref{p:velocity_est}, 
\[
\|v_1-v\|_0 = \|w\|_0 \leq C_e\sqrt{\delta}\, .
\]
If we impose $M$ to be larger than this particular constant $C_e$ (which depends only on $e$), we then achieve \eqref{e:C^0_est}.

Finally, as already observed in \eqref{e:pressure_M},
\[
\|p_1-p\|_0 = \frac{M^2}{4} \delta + \|v-v_\ell\|_0 \|w\|_0\, .
\]
Since $\|v-v_\ell\|_0\leq C D \ell \leq C  \bar{\delta}$
and $\|w\|_0\leq  C_e \sqrt{\delta}$, we easily conclude the inequality
\eqref{e:C0_pressure}. This completes the proof of all the conclusions of Proposition \ref{p:iterate}
except for the estimate of $\max\{\|v_1\|_{C^1}, \|\mathring{R}_1\|_{C^1}\}$.

\medskip

{\bf Step 4. $C^1$ estimates.} By Proposition \ref{p:Reynolds} and
the choices specified above we also have
\[
\|\mathring{R}_1\|_{C^1} \leq \bar{\delta} \lambda
\]
whereas Proposition \ref{p:velocity_est} shows
\begin{align*}
\|v_1\|_{C^1} &\leq D + \|w\|_{C^1} \leq D + C_e \sqrt{\delta} \lambda  .
\end{align*}
Thus, we conclude
\begin{align*}
\max \left\{\|v_1\|_{C^1}, \|\mathring{R}_1\|_{C^1}\right\}
&\leq D + C_e \sqrt{\delta} \lambda \leq D + C_e \sqrt{\delta} \Lambda_v  \left(
\frac{D\delta}{\bar{\delta}^2}\right)^{1+\eps}\nonumber\\
&\leq D + C_e \Lambda_v  \delta^{\frac{3}{2}} \left(\frac{D}{\bar{\delta}^2}\right)^{1+\eps}\, .
\end{align*}
Since $\delta^{\frac{3}{2}}\geq \bar{\delta}^2$, we obtain
\[
\max \left\{\|v_1\|_{C^1}, \|\mathring{R}_1\|_{C^1}\right\}
\leq 2 C_e \Lambda_v \delta^{\frac{3}{2}}\left(\frac{D}{\bar{\delta}^2}\right)^{1+\eps}\, .
\]
Setting $A = 2 C_e \Lambda_v$, we conclude estimate \eqref{e:C1_est}.

\section{Proof of Remark \ref{r:time_and_pressure}}\label{s:final}

{\bf Step 1. Estimate on the $C^1$ norm.} We claim that the proof of Proposition \ref{p:iterate} yields also the estimate
\begin{equation}\label{e:C1_pressure}
\|p_1\|_{C^1}\leq \|p\|_{C^1} + A \delta^{2+\eps} \left(\frac{D}{\bar{\delta}^2}\right)^{1+\eps}\, ,
\end{equation}
where, as in Proposition \ref{p:iterate}, $A$ is a constant which depends only on $e$, $\eps>0$ and $\|v\|_0$.
Indeed, recall the formula for the pressure:
\[
p_1 = p - \frac{|w_o|^2}{2} - \langle v-v_\ell, w\rangle\, .
\]
Therefore we estimate, using Proposition \ref{p:velocity_est}
\begin{align*}
\|p_1\|_{C^1} - \|p\|_{C^1} &\leq \|w_o\|_0 \|w_o\|_{C^1} + \|w\|_0 \|v-v_\ell\|_{C^1} + \|w\|_{C^1} \|v-v_\ell\|_0\\
&\leq C_e \delta \lambda + C_e D \sqrt{\delta}+ C_e D \ell \sqrt{\delta}\lambda\,.
\end{align*}
As before, \eqref{e:range} implies $\lambda\geq \mu D\geq D\delta^{-1}$ and $D\ell\leq \delta$. 
Therefore, we conclude
\begin{align*}
\|p_1\|_{C^1}&\leq \|p\|_{C^1} + C_e \delta \lambda \leq \|p\|_{C^1} + C_e \Lambda_v \delta \left(\frac{D\delta}{\bar{\delta}^2}\right)^{1+\eps}\\
&\leq \|p\|_{C^1} + A \delta^{2+\eps} \left(\frac{D}{\bar{\delta}^2}\right)^{1+\eps}\, .
\end{align*}

\medskip

{\bf Step 2. Iteration.} We now proceed as in the proof of Theorem \ref{t:main}. We construct 
the sequence $(p_n, v_n, \mathring{R}_n)$ of solutions to the Euler-Reynolds system, starting
from $(p_0, v_0, \mathring{R}_0)=(0,0,0)$ and applying Proposition \ref{p:iterate} with
$\delta_n = a^{-b^n}$. As in the proof of Theorem \ref{t:main}, we set 
\[
b=\frac{3}{2},\quad c= \frac{3(1+2\eps)}{1-2\eps}+\eps
\]
and choose $a$ sufficiently large so to guarantee the inequality
$$
D_n = \max\{\|v_n\|_{C^1}, \|\mathring{R}_n\|_{C^1}\} \leq a^{cb^n}\,.
$$
We then use 
\eqref{e:C1_pressure} to conclude
\[
\|p_{n+1}\|_{C^1} \leq \|p_n\|_{C^1} + A a^{(1+2\eps) (c+1) b^n}\, .
\]
Since $A$ depends only on $\|v_n\|_0$ which turns out to be uniformly
bounded, we can assume that $A$ does not depend on $n$. Therefore, 
if we choose $a$ sufficiently large, we can then write
\[
\|p_{n+1}\|_{C^1}\leq \|p_n\|_{C^1} + a^{(1+3\eps)(c+1)b^n}
\]
Since $p_0=0$, we inductively get the estimate 
\[
\|p_{n+1} \|_{C^1}\leq (n+1) a^{(1+3\eps)(c+1)b^n}\leq a^{[(1+4\eps)(c+1)]b^n}\, 
\]
(again the last inequality is achieved choosing $a$ sufficiently large).
Summarizing, if we set $\vartheta=(1+4\eps)(c+1)$, we have
\begin{align*}
\|p_{n+1}-p_n\|_0 \leq C_e \delta_n \leq C_e a^{-b^n}\\
\|p_{n+1}-p_n\|_{C^1}\leq a^{\vartheta b^n}
\end{align*}
Interpolating we get $\|p_{n+1}-p_n\|_{C^\varrho} \leq C_e a^{(\varrho (1+\vartheta) -1) b^n}$
for every $\varrho\in (0,1)$.
Thus the limiting pressure $p$ belongs to $C^\varrho$ for every
\[
\varrho <\frac{1}{1+\vartheta}= \frac{1}{1 + (1+4\eps)(c+1)}\, .
\]
As $\eps\downarrow 0$, we have $c\downarrow 3$ and hence
\[
\frac{1}{1+\vartheta} \uparrow \frac{1}{5}\, .
\]
Therefore, for every $\theta<\frac{1}{10}$, if the $\eps$ in Proposition \ref{p:iterate} is chosen
sufficiently small, we construct a pair $(p,v)$ which satisfies the conclusion of Theorem \ref{t:main}
and belongs to $C^\theta (\T^3\times \mathbb S^1, \R^3)\times C^{2\theta} (\T^3\times \mathbb S^1)$.


\end{document}